\newtheorem{theorem}{Theorem}[section]
\newtheorem{lemma}[theorem]{Lemma}
\theoremstyle{definition}
\newtheorem{definition}[theorem]{Definition}
\newtheorem{question}[theorem]{Question}
\newtheorem{proposition}[theorem]{Proposition}
\newtheorem{corollary}[theorem]{Corollary}
\newtheorem{remark}[theorem]{Remark}
\theoremstyle{remark}
\newcommand{\be}{\begin{equation}}
\newcommand{\ee}{\end{equation}}
\numberwithin{equation}{section}
\begin{document}
\title[On the spectral rigidity]{The spectral rigidity of Ricci soliton and Einstein-type manifolds}
\author{Ping Li}
%    Address of record for the research reported here
\address{School of Mathematical Sciences, Fudan University, Shanghai 200433, China}

\email{pinglimath@fudan.edu.cn}
%    \thanks will become a 1st page footnote.

\author{Xiaomei Sun}

\address{College of information, Huazhong Agricultural University, Wuhan 430070, China}

\email{xmsunn@mail.hzau.edu.cn}

\author{Anqiang Zhu}

\address{School of Mathematics and Statistics, Wuhan University, Wuhan 430072, China}

\email{aqzhu.math@whu.edu.cn}

\thanks{This work is partially supported by the National
Natural Science Foundation of China (Grant No. 12371066).}

\subjclass[2010]{58J50, 58C40, 53C55.}

%\date{January 1, 2001 and, in revised form, June 22, 2001.}

%\dedicatory{This paper is dedicated to our advisors.}
\keywords{spectrum, rigidity, Einstein manifold, gradient shrinking Ricci soliton, cohomologically-Einstein manifold.}

\begin{abstract}
We are concerned in this article with a classical topic in spectral geometry dating back to McKean-Singer, Patodi and Tanno: whether or not the constancy of sectional curvature (resp. holomorphic sectional curvature) of a compact Riemannian manifold (resp. K\"{a}hler manifold) can be completely determined by the eigenvalues of its $p$-Laplacian for a \emph{single} integer $p$? We treat this question under two conditions: gradient shrinking Ricci soliton for Riemannian manifolds and cohomologically Einstein for K\"{a}hler manifolds. We show that, with some sporadic unknown cases, this is true for each $p$. Furthermore, we show that the condition of being isospectral can be relaxed to a suitable almost-isospectral version.
\end{abstract}

\maketitle

%\tableofcontents

\section{Introduction}\label{introduction}
Unless otherwise stated, all the Riemannian (resp. K\"{a}hler) manifolds in this article are closed, connected and oriented (resp. with the canonical orientation).

Let $(M,g)$ be an $m$-dimensional Riemannian manifold, $\Omega^p(M)$ ($0\leq p\leq m$) the space of smooth exterior $p$-forms on $M$, $d:~\Omega^p(M)\rightarrow\Omega^{p+1}(M)$ the operator of exterior differentiation, and $d^{\ast}:~\Omega^p(M)\rightarrow\Omega^{p-1}(M)$ the formal adjoint of $d$ relative to the Riemannian metric $g$. Here $\Omega^p(M):=0$ provided that $p=-1$ or $m+1$. We have, for each $0\leq p\leq m$, the following second-order self-adjoint elliptic operator, the Laplacian acting on $p$-forms:
\be\label{delta}\Delta_p:=dd^{\ast}+d^{\ast}d:~\Omega^p(M)
\longrightarrow\Omega^p(M).\ee
It is well-known that $\Delta_p$ has an infinite discrete sequence
\be0\label{eigenvalue}\leq\lambda_{1,p}\leq\lambda_{2,p}\leq\cdots\leq\lambda_{i,p}\leq
\cdots\uparrow+\infty\ee
of eigenvalues and each of them is repeated as many times as its multiplicity indicates. These $\lambda_{i,p}$ are called the \emph{spectra} of $\Delta_p$. Put
$$\text{Spec}^p(M,g):=\big\{\lambda_{1,p}, \lambda_{2,p},\ldots,\lambda_{i,p},\ldots\big\},$$
which is called the spectral set of $\Delta_p$. Duality and Hodge theory tell us that $\text{Spec}^p(M,g)=\text{Spec}^{m-p}(M,g)$ and $0\in\text{Spec}^p(M,g)$ if and only if the $p$-th Betti number $b_p(M)\neq0$ and its multiplicity is precisely $b_p(M)$.

An important problem in spectral geometry is to investigate how the geometry of $(M,g)$ can be reflected by its spectra $\{\lambda_{i,p}\}$. In general the spectra $\{\lambda_{i,p}\}$ are not able to determine a manifold up to an isometry, as Milnor has constructed in \cite{Mi} two non-isometric Riemannian structures on a $16$-dimensional manifold such that for each $p$ the spectral sets $\text{Spec}^p(\cdot)$ with respect to these Riemannian metrics are the same. Nevertheless, we may still ask to what extent the spectra $\{\lambda_{i,p}\}$ encode the geometry of $(M,g)$.

Recall that, for any positive integer $N$, the famous Minakshisundaram-Pleijel asymptotic expansion formula (\cite{MP}), which is the integration on the asymptotic expansion of the heat kernel for Laplacian, implies
\be\label{mpgformula}
\begin{split}
\text{Trace}(e^{-t\Delta_p})&=\sum_{i=0}^{\infty}\exp(-\lambda_{i,p}t)\\
&=\frac{1}{(4\pi t)^{\frac{m}{2}}}\Big[{m\choose p}\text{Vol}(M,g)+
\sum_{i=1}^{N}a_{i,p}t^i\Big]+O(t^{N-\frac{m}{2}+1}),\qquad t\downarrow0\\
&=\frac{1}{(4\pi t)^{\frac{m}{2}}}
\sum_{i=0}^{N}a_{i,p}t^i+O(t^{N-\frac{m}{2}+1}),\qquad t\downarrow0,\qquad\Big(a_{0,p}:={m\choose p}\text{Vol}(M,g)\Big).
\end{split}\ee
Here $\text{Vol}(M,g)$ is the volume of $(M,g)$ and $a_{i,p}$ ($i\geq 1$) are certain functions of the curvature, which are completely determined by the spectral set $\text{Spec}^p(M,g)$. The coefficients $a_{1,0}$ and $a_{2,0}$ were calculated by Berger and McKean-Singer (\cite{Be63}, \cite{MS}) and then in \cite[p. 277]{Pa} Patodi explicitly determined $a_{1,p}$ and $a_{2,p}$ for \emph{all} $p$. Since then this asymptotic expansion (\ref{mpgformula}) and the concrete formulas for $a_{i,p}~(i=0,1,2)$ have become indispensable tools in studying related questions in spectral geometry.

When $(M,g)$ is flat, i.e., it has constant sectional curvature $k=0$, then $a_{i,p}=0$ for all $p$ and $i\geq 1$ as these $a_{i,p}$ are functions of the curvature. McKean and Singer raised in \cite{MS} a converse question: if $a_{i,0}=0$ for all $i\geq 1$, then whether or not $(M,g)$ is flat? They proved in \cite{MS} that this is true if the dimension $m\leq 3$. Patodi further showed in \cite{Pa} that this is true if $m\leq 5$ and is false when $m>5$ by constructing counterexamples (\cite[p. 283]{Pa} or \cite[p. 65]{Pa2}). This means that in general the vanishing of $a_{i,p}$ ($i\geq 1$) for only \emph{one single} $p$ is not enough to derive the flatness. Nevertheless, applying the explicit expressions of $a_{1,p}$ and $a_{2,p}$ determined by himself in \cite{Pa}, Patodi showed that whether or not $(M,g)$ is of constant sectional curvature $k$ is completely determined by the quantities $\{a_{i,p}~|~i=0,1,2,~p=0,1\}$, i.e., by the \emph{two} spectral sets $\text{Spec}^0(M,g)$ and $\text{Spec}^1(M,g)$ (\cite[p. 281]{Pa} or \cite[p. 63]{Pa2}).

The next question is to what extent the geometry of $(M,g)$ can be recovered by a \emph{single} spectral set. Note that if two Riemannian manifolds have the same spectral set $\text{Spec}^p(\cdot)$ for a single $p$, due to $(\ref{mpgformula})$ they necessarily have the same dimension. Note also that for an $m$-dimensional Riemannian manifold we only need to consider the spectral sets $\text{Spec}^p(\cdot)$ for $p\leq[\frac{m}{2}]$ as $\text{Spec}^p(\cdot)=\text{Spec}^{m-p}(\cdot)$. In view of these two basic facts, we can pose the following question, which was initiated by Tanno (\cite{Ta1}).
\begin{question}\label{Q0}
Let $(M_1,g_1)$ and $(M_2,g_2)$ be $m$-dimensional Riemannian manifolds such that $\text{Spec}^p(M_1,g_1)=\text{Spec}^p(M_2,g_2)$
for some \emph{fixed} $p$ with $p\leq [\frac{m}{2}]$, and $(M_2,g_2)$ of constant sectional curvature $k$.  Is it true that $(M_1,g_1)$ is of constant sectional curvature $k$?
\end{question}
Question \ref{Q0} has been affirmatively verified when $(p=0, m\leq5)$, $(p=0,m=6)$ provided $k>0$ (\cite{Ta1}), $(p=1, m\in[2,3]\cup[16,93])$ (\cite{Ta2}) and
$(p=2,m\in\{6,7,14\}\cup[17,178])$ (\cite{TK}).

The notion of ``holomorphic sectional curvature" (``HSC" for short) in K\"{a}hler geometry is the counterpart of that of ``sectional curvature" in Riemannian geometry and
so it is natural to consider a similar question for K\"{a}hler manifolds, which was also initiated in \cite{Ta1}.
\begin{question}\label{Q1}
Let $(M_1,g_1,J_1)$ and $(M_2,g_2,J_2)$ be complex $n$-dimensional K\"{a}hler manifolds such that $\text{Spec}^p(M_1,g_1)=\text{Spec}^p(M_2,g_2)$
for a \emph{fixed} $p$ with $p\leq n$, and $(M_2,g_2,J_2)$ of constant HSC $c$.  Is it true that $(M_1,g_1,J_1)$ is of constant HSC $c$?
\end{question}
Recall that, up to a holomorphic isometry, $(\mathbb{C}P^n(c),g_0,J_0)$, the standard complex $n$-dimensional projective space equipped with the Fubini-Study metric with \emph{positive} constant HSC $c$, is the unique complex $n$-dimensional compact K\"{a}hler manifold with \emph{positive} constant HSC $c$ by the classical uniformization theorem. So we also have the following spectral characterization problem for $(\mathbb{C}P^n(c),g_0,J_0)$, which, to the authors' best knowledge, was first explicitly proposed by B.Y. Chen and Vanhecke in \cite{CV}.
\begin{question}\label{Q2}
Let $(M,g,J)$ be a K\"{a}hler manifold such that $\text{Spec}^p(M,g)=\text{Spec}^p(\mathbb{C}P^n(c),g_0)$
for a \emph{fixed} $p$ with $p\leq n$. Is it true that $(M,g,J)$ is holomorphically isometric to $(\mathbb{C}P^n(c),g_0,J_0)$?
\end{question}
Clearly a positive answer to Question \ref{Q1} implies that to Question \ref{Q2}.
Question \ref{Q1} was affirmatively verified when $(p=0, n\leq5)$, $(p=0,n=6)$ provided $c\neq0$ (\cite{Ta1}), and $(p=1, 8\leq n\leq 51)$ (\cite{Ta2}).
Consequently, Question \ref{Q2} is also true in these cases. When $p=2$, Question \ref{Q2} is true for all dimensions $n$, due to the work of Chen-Vanhecke (\cite{CV}), Goldberg (\cite{Go}) and the first author (\cite{Li}). In \cite{Li} we gave the most complete proof and clarified some gaps in previous literature especially when $(p,n)=(2,8)$. Moreover, for each \emph{even} $p\geq4$, we solved Question \ref{Q2} in all dimensions with at most one exception  (\cite[Theorem 1.3]{Li}). Very recently Huang-Liu-Xu-Zhi gave an explicit (finite) range of $n$ for each fixed \emph{odd} $p$ where Question \ref{Q2} holds true (\cite[Theorem 2.10]{HLXZ}) by extending some ideas from \cite{Li}. Note that in all the solved cases in Questions \ref{Q0}, \ref{Q1} and \ref{Q2}, only those positive even $p$ in Question \ref{Q2} are the known cases where they hold true for \emph{infinitely many} dimensions $n$. In particular, the $p=2$ case is the \emph{only} known one where it holds true for \emph{all} dimensions $n$.

\emph{The main purpose of this paper} is to impose an extra condition on $M_1$ in Questions \ref{Q0}-\ref{Q2} to treat them. Indeed, if $(M_1,g_1)$ in Question \ref{Q0} is further assumed to be \emph{Einstein}, it holds true for $(p=0, \text{all $m$})$ (\cite{Sa}) and $\big(p=2,m\in[2,7]\cup\{14\}\cup[17,\infty)\big)$ (\cite{TK}). The extra conditions we impose are \emph{gradient shrinking Ricci soliton} in Question \ref{Q0} and \emph{cohomologically Einstein} in Questions \ref{Q1} and \ref{Q2}. Both are natural generalizations of Einstein metrics. Actually what we really require is $a_{i,p}(M_1,g_1)=a_{i,p}(M_2,g_2)~(i=0,1,2)$, which is weaker than the isospectral condition $\text{Spec}^p(M_1,g_1)=\text{Spec}^p(M_2,g_2)$. Besides these, We also discuss an almost isospectral version proposed in \cite{WU}.

The rest of this paper is organized as follows. The main results and various consequences (Corollaries \ref{coro1} and \ref{coro2}) are stated in Section \ref{main results}. After presenting some preliminaries in Section \ref{preliminaries}, the next four sections are devoted to the proofs of these results respectively.

\section{Main results}\label{main results}
Let $(p_i,m_i)$ be determined by the recursive formula
\begin{eqnarray}\label{recursiveformula1}
\left\{ \begin{array}{ll}
({p}_1,{m}_1)=(0,1),\\

{p}_{i+1}={m}_i-{p}_i,\\

{m}_{i+1}=5{m}_i-6{p}_i+1.
\end{array} \right.
\end{eqnarray}
Here $(p_2,m_2)=(1,6)$, $(p_3,m_3)=(5,25)$, $(p_4,m_4)=(20,96)$, $\cdots$, whose distributions become more and more sparse as $i\rightarrow\infty$.

Our first simple result is a treatment of Question \ref{Q0} when $(M_1,g_1)$ is \emph{Einstein}, greatly extending \cite[Theorem 5.1]{Sa} and \cite[Theorem 3.3]{TK} for $p=0$ and $2$.
\begin{proposition}\label{main result-proposition}
Let $(M_1,g_1)$ and $(M_2,g_2)$ be $m$-dimensional Riemannian manifolds such that
$$a_{i,p}(M_1,g_1)=a_{i,p}(M_2,g_2),\qquad i=0,1,2$$
 for some \emph{fixed} $p$ with $p\leq [\frac{m}{2}]$, $(M_1,g_1)$ Einstein, and $(M_2,g_2)$ of constant sectional curvature $k$. If $(p,m)\notin\{(1,15),(2,15),(2,16),(p_i,m_i)~|~i\geq2\}$, where $(p_i,m_i)$ are determined by (\ref{recursiveformula1}), then $(M_1,g_1)$ is of constant sectional curvature $k$.
\end{proposition}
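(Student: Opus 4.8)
The plan is to reduce the isospectral-type hypothesis to an algebraic identity involving curvature invariants, using the explicit formulas of Patodi for $a_{0,p}$, $a_{1,p}$, and $a_{2,p}$. Since $a_{i,p}(M_1,g_1)=a_{i,p}(M_2,g_2)$ for $i=0,1,2$ and $(M_2,g_2)$ has constant sectional curvature $k$, I first record that $a_{0,p}$ forces $\mathrm{Vol}(M_1,g_1)=\mathrm{Vol}(M_2,g_2)$, and that $a_{1,p}$ gives an integral relation on the scalar curvature. Because $(M_1,g_1)$ is assumed Einstein, its scalar curvature is a constant, say $\tau_1$, and the $a_{1,p}$ equation together with the matched volume should pin down $\tau_1$ to equal the (constant) scalar curvature $\tau_2=m(m-1)k$ of $(M_2,g_2)$; here the Einstein condition is what converts an integral equation into a pointwise one. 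So after the first two coefficients I expect to know that $(M_1,g_1)$ is Einstein with exactly the Ricci curvature of the constant-curvature model.

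Next I would bring in $a_{2,p}$, which is the crucial coefficient. Patodi's formula expresses $a_{2,p}$ as a universal linear combination of the three quadratic curvature invariants $\lvert R\rvert^2$ (norm of the full curvature tensor), $\lvert\mathrm{Ric}\rvert^2$ (norm of the Ricci tensor), and $\tau^2$ (square of scalar curvature), integrated over the manifold, with coefficients that are explicit polynomials in $m$ and $p$. The matching $a_{2,p}(M_1,g_1)=a_{2,p}(M_2,g_2)$ therefore yields one integral identity. Using the already-established facts that $\tau_1$ is constant and equals $\tau_2$, and that $\lvert\mathrm{Ric}_1\rvert^2=\tau_1^2/m$ pointwise by the Einstein condition, this identity collapses to a statement of the form
\[
\int_{M_1}\lvert R_1\rvert^2\,dV=\int_{M_2}\lvert R_2\rvert^2\,dV=c(m,k)\,\mathrm{Vol}(M_2,g_2),
\]
where the right side is the value of $\lvert R_2\rvert^2$ (a constant, since $M_2$ has constant curvature) times the common volume. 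The goal is then to show this integral bound forces $R_1$ to have constant sectional curvature $k$.

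The mechanism for the final step is a pointwise Gauss-type algebraic inequality for the curvature tensor of an Einstein manifold: for a fixed scalar curvature, $\lvert R\rvert^2$ is minimized pointwise exactly when the sectional curvature is constant, with the minimum value being precisely $\lvert R_2\rvert^2$. Integrating, $\int_{M_1}\lvert R_1\rvert^2\geq c(m,k)\,\mathrm{Vol}(M_1,g_1)$, with equality if and only if $R_1$ has constant sectional curvature. Since the matched integrals give equality, $(M_1,g_1)$ has constant sectional curvature; comparing scalar curvatures shows the constant is $k$. The role of the excluded exceptional pairs $(p,m)$ is that they are precisely the values where the coefficient of $\lvert R\rvert^2$ in Patodi's $a_{2,p}$ formula degenerates (vanishes or changes sign), so the identity no longer controls $\int\lvert R_1\rvert^2$ in the favorable direction; identifying exactly which $(p,m)$ cause this degeneration — and verifying it matches the set $\{(1,15),(2,15),(2,16),(p_i,m_i)\}$ coming from the recursion (\ref{recursiveformula1}) — is the main technical obstacle.

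I expect the hardest part to be the bookkeeping that shows the recursion (\ref{recursiveformula1}) exactly captures the bad pairs: one must write out the Patodi coefficient of $\lvert R\rvert^2$ as an explicit function of $(p,m)$, determine when its sign is wrong, and show the solution set is governed by the given recursive formula together with the three sporadic exceptions. The other steps are essentially the Einstein reduction plus the classical algebraic curvature inequality, and should be routine once the coefficient's positivity is established.
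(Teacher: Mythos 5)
Your overall mechanism is the same as the paper's: match volumes via $a_{0,p}$, match the (constant) scalar curvatures via $a_{1,p}$ using the Einstein hypothesis, then feed $s_{g_1}=s_{g_2}=m(m-1)k$ and $|\mathrm{Ric}(g_1)|^2=s_{g_1}^2/m$ into Patodi's $a_{2,p}$ and use the curvature decomposition to force the Weyl tensor of $g_1$ to vanish (your ``Gauss-type inequality'' $|R|^2\geq\frac{2s_g^2}{m(m-1)}$, with equality iff constant curvature, is exactly the orthogonal splitting $|R|^2=|S|^2+|P|^2+|W|^2$ with $P=0$ for Einstein metrics). However, there is a genuine gap in where you locate the exceptional pairs. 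Your step ``the $a_{1,p}$ equation together with the matched volume should pin down $\tau_1$'' silently requires the universal coefficient $\frac{(m-2)!}{p!(m-p)!}\big[p^2-mp+\frac{m(m-1)}{6}\big]$ in Patodi's formula for $a_{1,p}$ to be nonzero. When $p^2-mp+\frac{m(m-1)}{6}=0$, the coefficient $a_{1,p}$ vanishes identically for \emph{every} metric, the equality $a_{1,p}(M_1,g_1)=a_{1,p}(M_2,g_2)$ carries no information, and you cannot conclude $\tau_1=m(m-1)k$; the whole argument then stalls at this earlier step. The integer solutions of $p^2-mp+\frac{m(m-1)}{6}=0$ with $p\leq[\frac{m}{2}]$ are governed by a Pell equation, and they are precisely the recursion-generated pairs $(p_i,m_i)$ of (\ref{recursiveformula1}). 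That is the true role of those exceptions: they belong to the $a_{1,p}$ step, not to $a_{2,p}$.

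Consequently, the task you single out as the ``main technical obstacle'' --- verifying that the coefficient of $|R|^2$ in $a_{2,p}$ degenerates exactly on $\{(1,15),(2,15),(2,16),(p_i,m_i)\}$ --- is unachievable, because it is false: that coefficient, $c_1(p,m)=\frac{1}{180}\binom{m}{p}-\frac{1}{12}\binom{m-2}{p-1}+\frac{1}{2}\binom{m-4}{p-2}$, vanishes (for $p\leq[\frac{m}{2}]$) only at the three sporadic pairs $(1,15)$, $(2,15)$, $(2,16)$, while at the Pell pairs it is nonzero --- for instance $c_1(1,6)=-\frac{1}{20}$ at $(p_2,m_2)=(1,6)$. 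The repair is straightforward and is what the paper does: impose $p^2-mp+\frac{m(m-1)}{6}\neq0$ (excluding exactly the pairs $(p_i,m_i)$, $i\geq2$, with $(0,1)$ handled directly) to run the scalar-curvature step, and impose only $c_1\neq0$ (excluding exactly the three sporadic pairs) in the final step. Note also that the sign of $c_1$ is irrelevant: the $a_{2,p}$ comparison yields the exact identity $c_1\int_{M_1}|W(g_1)|^2\,\mathrm{dvol}=0$, so nonvanishing of $c_1$ already gives $W(g_1)\equiv0$; your concern about $c_1$ ``changing sign'' so that the identity fails to control $\int|R_1|^2$ ``in the favorable direction'' is unnecessary.
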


In recent years there have been much interest and increasing
research activities in Ricci solitons (\cite{Ca}), which are natural generalizations of Einstein metrics. The \emph{gradient Ricci soliton} on a Riemannian manifold $(M,g)$ is defined to be
\be\label{Ricci soliton equation}\text{Ric}(g)+\nabla^2f=\rho g,\qquad\rho\in\mathbb{R},\ee
where $\text{Ric}(g)$ is the Ricci tensor of $g$ and $f$ a smooth function on $M$ called a potential function of the Ricci soliton. The metric $g$ is Einstein if and only if $f$ is constant. When $M$ is compact, as we always assume in this article, and $\rho\leq 0$, it is well-known that the potential function $f$ in (\ref{Ricci soliton equation}) is necessarily constant (\cite[p. 123]{Ca}). So the interesting case is $\rho>0$ and called \emph{the gradient shrinking Ricci soliton}, which plays an important role in the
singularity study of the Ricci flow.

Our \emph{first major result} in this article is the following
\begin{theorem}\label{theorem1}
Let $(M_1,g_1)$ and $(M_2,g_2)$ be $m$-dimensional Riemannian manifolds such that
$$a_{i,p}(M_1,g_1)=a_{i,p}(M_2,g_2),\qquad i=0,1,2$$
for some \emph{fixed} $p$ with $p\leq [\frac{m}{2}]$, $(M_1,g_1,f)$ a gradient shrinking Ricci soliton, and $(M_2,g_2)$ of constant sectional curvature $k$.
Then $(M_1,g_1)$ is of
constant sectional curvature $k$ if $(p,m)$ falls into the following cases:
\begin{enumerate}
\item
\rm($p=0$, all dimensions $m$);

\item
($p=1$, all dimensions $m\geq 16$);

\item
($p=2,3,4,6,7,8$,
all dimensions $m\geq 2p+1$);

\item
($p=5$, all dimensions $m\geq 11$ and $m\neq25$);

\item
($p\geq 9$ and $p\not\in\{p_i~|~i\geq 4\}$, all dimensions $m$);

\item
($p=p_i$, all dimensions $m$ with at most one exception $m=m_i$)~\rm($i\geq 4$\rm),
\end{enumerate}
where $(p_i,m_i)$ are determined by (\ref{recursiveformula1}).
\end{theorem}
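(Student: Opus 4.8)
The plan is to re-express Patodi's coefficients $a_{1,p}$ and $a_{2,p}$ through the orthogonal decomposition of the curvature tensor into its Weyl part $W$, its traceless Ricci part $\overset{\circ}{\mathrm{Ric}}=\mathrm{Ric}-\tfrac{\tau}{m}g$ (here $\tau$ is the scalar curvature), and its scalar part $\tau$, and then to exploit a soliton integral identity that eliminates $\overset{\circ}{\mathrm{Ric}}$. Using $|\mathrm{Ric}|^2=|\overset{\circ}{\mathrm{Ric}}|^2+\tfrac{\tau^2}{m}$ and $|R|^2=|W|^2+\tfrac{4}{m-2}|\overset{\circ}{\mathrm{Ric}}|^2+\tfrac{2}{m(m-1)}\tau^2$ (for $m\ge3$), Patodi's formulas take the shape
\[
a_{1,p}=c_1(m,p)\int_M\tau\,dV,\qquad
a_{2,p}=\int_M\big(A(m,p)\,|W|^2+B(m,p)\,|\overset{\circ}{\mathrm{Ric}}|^2+C(m,p)\,\tau^2\big)\,dV,
\]
where $c_1,A,B,C$ are explicit rational combinations of the binomials $\binom{m}{p},\binom{m-2}{p-1},\binom{m-4}{p-2}$. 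A Weitzenb\"ock computation gives $c_1(m,p)=\tfrac16\big[\binom{m}{p}-6\binom{m-2}{p-1}\big]$, so that $c_1(m,p)=0$ if and only if $m(m-1)=6p(m-p)$; one checks that the integer solutions of this relation with $p\le[\tfrac m2]$ are exactly the pairs $(p_i,m_i)$ of (\ref{recursiveformula1}) (the jump $(p,m)\mapsto(m-p,\,5m-6p+1)$ preserves the relation). This is precisely where the $a_{1,p}$ step degenerates, accounting for case (6) and for the excluded dimensions $m=6,25$ sitting inside cases (2) and (4).

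First I would read off from the hypotheses that $\mathrm{Vol}(M_1)=\mathrm{Vol}(M_2)=:V$ (from $a_{0,p}$ and $\binom mp>0$). Since $(M_2,g_2)$ has constant sectional curvature $k$, we have $W_2=0$, $\overset{\circ}{\mathrm{Ric}}_2=0$ and $\tau_2\equiv m(m-1)k$, so $\int_{M_2}\tau_2=m(m-1)kV=:T$, $\int_{M_2}\tau_2^2=T^2/V$, and hence $a_{2,p}(M_2)=C(m,p)\,T^2/V$. When $(p,m)\ne(p_i,m_i)$, i.e. $c_1(m,p)\ne0$, the equality $a_{1,p}(M_1)=a_{1,p}(M_2)$ forces $\int_{M_1}\tau_1=T$ as well.

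The decisive new ingredient is an integral identity for the compact gradient shrinking soliton $(M_1,g_1,f)$. Tracing (\ref{Ricci soliton equation}) gives $\tau_1+\Delta f=m\rho$, hence $\int_{M_1}\tau_1=m\rho V=T$; then writing $|\mathrm{Ric}_1|^2=\rho\tau_1-R_1^{ij}\nabla_i\nabla_j f$ from the soliton equation, integrating by parts and using the contracted Bianchi identity $\nabla^iR_{ij}=\tfrac12\nabla_j\tau$, I obtain
\[
\int_{M_1}|\overset{\circ}{\mathrm{Ric}}_1|^2\,dV=\frac{m-2}{2m}\Big(\int_{M_1}\tau_1^2\,dV-\frac{T^2}{V}\Big).
\]
Thus the traceless Ricci integral is governed entirely by the variance of the scalar curvature. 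Setting $X:=\int_{M_1}|W_1|^2\ge0$ and $Y:=\int_{M_1}\tau_1^2-T^2/V\ge0$ (the latter by Cauchy--Schwarz, since $\int_{M_1}\tau_1=T$) and substituting into $a_{2,p}(M_1)=a_{2,p}(M_2)=C\,T^2/V$, everything collapses to the single relation
\[
A(m,p)\,X+D(m,p)\,Y=0,\qquad D(m,p):=\frac{m-2}{2m}B(m,p)+C(m,p).
\]
If $A(m,p)$ and $D(m,p)$ are both nonzero and of the same sign, then $X=Y=0$: so $W_1\equiv0$ and $\tau_1$ is constant, which by the displayed identity forces $\overset{\circ}{\mathrm{Ric}}_1\equiv0$, i.e. $(M_1,g_1)$ is Einstein; being Weyl-flat and Einstein it has constant sectional curvature, equal to $k$ because $\tau_1=T/V=m(m-1)k$.

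The theorem thus reduces to the sign analysis of the two explicit coefficients $A(m,p)$ and $D(m,p)$, and this is the step I expect to be the main obstacle: one must verify, for every admissible $(p,m)$, that $A$ and $D$ are nonzero with a common sign, and isolate the finitely many low-dimensional coincidences where this fails (these should produce the thresholds $m\ge16$ for $p=1$, $m\ge11$ with $m\ne25$ for $p=5$, and the sporadic gaps in the other small-$p$ ranges). It is illuminating to contrast this with the Einstein case of Proposition \ref{main result-proposition}: there $\tau_1$ is automatically constant, so $Y=0$ for free and only $A(m,p)\ne0$ is needed, which is why that statement has fewer exceptions. The soliton hypothesis forces us to control $D$ as well; this simultaneously creates the new lower-dimensional restrictions and---because the identity trades $|\overset{\circ}{\mathrm{Ric}}|^2$ for a scalar term with better large-$m$ behaviour---removes the upper dimension bounds present in the purely isospectral results (such as Tanno's cap $m\le93$ for $p=1$). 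Carrying out these combinatorial sign estimates for $A$ and $D$ uniformly in $m$, together with a finite check in low dimensions, is what remains.
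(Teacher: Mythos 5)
Your reduction is, in substance, the paper's own. You rewrite $a_{2,p}$ via the orthogonal curvature decomposition (this is (\ref{a2p real})), eliminate the traceless Ricci term through a soliton integral identity, and reduce everything to the positivity of the two coefficients $A=c_1$ and $D=\tfrac{m-2}{2m}B+C$, which simplifies to exactly the paper's $\tfrac{2c_1}{m-1}+\tfrac{c_2}{2}+c_3$. Your identity
$\int_{M_1}|\mathring{\text{\rm Ric}}(g_1)|^2\,{\rm dvol}=\tfrac{m-2}{2m}\bigl(\int_{M_1}s_{g_1}^2\,{\rm dvol}-T^2/V\bigr)$
is correct, and your derivation (contract the soliton equation with $\text{\rm Ric}$, integrate by parts, use the contracted Bianchi identity $\nabla^iR_{ij}=\tfrac12\nabla_j s_g$) is a valid and slightly more direct alternative to the paper's route, which first proves $\int_{M}|\nabla^2f|^2=\tfrac12\int_{M}(\Delta f)^2$ via the Bochner formula (Lemma \ref{integral identity}) and then substitutes to get (\ref{a2p soliton}); the two computations are equivalent, and your ``$X\ge0$, $Y\ge0$, $AX+DY=0$'' step is precisely Proposition \ref{prop}.

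The genuine gap is that you stop exactly where the content specific to Theorem \ref{theorem1} begins: the case list (1)--(6) is not a consequence of the reduction but the outcome of the coefficient analysis you defer as ``what remains.'' The paper needs three nontrivial lemmas here: Lemma \ref{Pell equation} (a Pell-equation argument showing the vanishing locus of the $a_{1,p}$-coefficient is \emph{exactly} $\{(p_i,m_i)\}$ --- you assert this but do not prove it), Lemma \ref{c1} ($c_1>0$ for $p\le[\tfrac m2]$ except precisely $(1,15),(2,15),(2,16)$, where it vanishes), and Lemma \ref{coefficient lemma}, which establishes $D>0$ for $(p=0,m\ge1)$, $(p=1,m\ge3)$, $(p\ge2,m\ge17)$ and $(p<\tfrac m2,\ 5\le m\le16)$ by an explicit minimization of the associated quartic $g(p)$, including the evaluations at $p=0,2,\tfrac{m-1}{2},\tfrac m2$ in (\ref{five}) and (\ref{eight}). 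Without these, your argument certifies no specific pair $(p,m)$, and the thresholds in the statement ($m\ge16$ for $p=1$; $m\ge11$, $m\ne25$ for $p=5$; the sporadic pairs $(p_i,m_i)$) cannot be read off. Moreover, the check is not a formality: your own criterion (``$A$ and $D$ nonzero and of the same sign'') genuinely fails at $(p,m)=(2,15)$ and $(2,16)$, where $c_1=0$ and the relation $AX+DY=0$ yields only that $g_1$ is Einstein, not that $W(g_1)=0$, even though these pairs are nominally covered by case (3) of the statement; this delicate point (present equally in the paper's own argument, since the hypothesis $c_1>0$ of Proposition \ref{prop} excludes those two pairs) is invisible at the level of generality where you stopped. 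So the skeleton is right and matches the paper, but as a proof of the stated theorem it is incomplete.
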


Recall that a compact K\"{a}hler manifold is called \emph{cohomologically Einstein} if its first Chern class and K\"{a}hler class are proportional, which particularly holds true if the second Betti number $b_2=1$. Let $(p_i,n_i)$ be determined by the following recursive formula
\begin{eqnarray}\label{recursiveformula}
\left\{ \begin{array}{ll}
({p}_1,{n}_1)=(1,3),\\

{p}_{i+1}=8{n}_i-5{p}_i+1,\\

{n}_{i+1}=19{n}_i-12{p}_i+3,
\end{array} \right.
\end{eqnarray}
where
$$(p_2,n_2)=(20,48), (p_3,n_3)=(285,675), (p_4,n_4)=(3976,9408), (p_5, n_5)=(55385,131043), \cdots,$$ and their distributions become more and more sparse as $i\rightarrow\infty$.

Our \emph{second major result} in this article is the following
\begin{theorem}\label{theorem2}
Let $(M_1,g_1,J_1)$ and $(M_2,g_2,J_2)$ be two complex $n$-dimensional K\"{a}hler manifolds such that
$$a_{i,p}(M_1,g_1)=a_{i,p}(M_2,g_2),\qquad i=0,1,2,$$
for a fixed $p$ with $p\leq n$, $(M_1,g_1,J_1)$ is cohomologically Einstein, and $(M_2,g_2,J_2)$ is of constant HSC $c$. Then $(M_1,g_1,J_1)$ is of constant HSC $c$ if $(p,n)$ falls into the following cases:
\begin{enumerate}
\item
\rm($p=0$, all dimensions $n$);

\item
($p=1$, all dimensions $n\geq 6$);

\item
($p=2$,
all dimensions $n$ with at most one exception $n=8$);

\item
($p\geq 3$ and $p\not\in\{p_i~|~i\geq 2\}$, all dimensions $n$);

\item
($p=p_i$, all dimensions $n$ with at most one exception $n=n_i$)~\rm($i\geq 2$\rm),
\end{enumerate}
where $(p_i,n_i)$ are determined by (\ref{recursiveformula}).
\end{theorem}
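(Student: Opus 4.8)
The plan is to run a Patodi-type heat-coefficient comparison, using the cohomologically-Einstein hypothesis precisely to kill the scalar-curvature term that normally obstructs such arguments. Write $m=2n$ for the real dimension and let $s$ denote the (complex) scalar curvature, $B$ the Bochner tensor, and $E$ the trace-free part of the Ricci tensor. Patodi's explicit formula presents $a_{2,p}$ as the integral of a universal quadratic curvature invariant, a fixed combination of $|R|^2$, $|\mathrm{Ric}|^2$ and the scalar curvature squared with coefficients that are polynomial in $p$ and $m$. On a Kähler manifold I would feed in the Kähler decomposition $R=B+(\text{Ricci part})+(\text{scalar part})$, together with the Kähler contraction identities, to rewrite this in the form
\[
a_{2,p}(M)=\int_M\big[\alpha_0(p,n)\,|B|^2+\beta_0(p,n)\,|E|^2+\gamma_0(p,n)\,s^2\big]\,dV,
\]
where $\alpha_0,\beta_0,\gamma_0$ are explicit rational functions of $p$ and $n$.

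First I would harvest the low-order data: $a_{0,p}(M_1)=a_{0,p}(M_2)$ gives $\mathrm{Vol}(M_1)=\mathrm{Vol}(M_2)$, and $a_{1,p}(M_1)=a_{1,p}(M_2)$ gives $\int_{M_1}s\,dV=\int_{M_2}s\,dV$, the coefficient of $\int s$ in $a_{1,p}$ being nonzero in the listed range. Next I would invoke cohomologically Einstein on $M_1$: since $c_1(M_1)$ is proportional to the Kähler class, Chern--Weil theory evaluates $\int_{M_1}c_1^2\wedge\omega^{\,n-2}$ exactly in terms of $\int_{M_1}c_1\wedge\omega^{\,n-1}$ and the volume, and the pointwise identity $\rho\wedge\rho\wedge\omega^{\,n-2}=\tfrac{1}{n(n-1)}\big((\mathrm{tr}_\omega\rho)^2-|\rho|^2\big)\,\omega^n$, with $\mathrm{tr}_\omega\rho=s$ and $|\rho|^2=|E|^2+s^2/n$, converts this into the integral identity
\[
\tfrac{n-1}{n}\Big(\int_{M_1}s^2\,dV-\tfrac{(\int_{M_1}s\,dV)^2}{\mathrm{Vol}(M_1)}\Big)=\int_{M_1}|E|^2\,dV,
\]
whose calibration constant is fixed by testing against the space form $M_2$. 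Because $M_2$ has constant HSC it is Kähler--Einstein, so $E\equiv0$ and $s$ is constant there. Subtracting $a_{2,p}(M_1)=a_{2,p}(M_2)$, substituting the equal volumes and total scalar curvatures, and using the displayed identity to eliminate the scalar-variance term, I would land on the clean identity
\[
\alpha(p,n)\int_{M_1}|B|^2\,dV+\beta(p,n)\int_{M_1}|E|^2\,dV=0,\qquad \alpha=\alpha_0,\ \beta=\beta_0+\tfrac{n}{n-1}\gamma_0.
\]

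The main obstacle, and the step that generates the exceptional list and the recursion (\ref{recursiveformula}), is the sign analysis of $\alpha(p,n)$ and $\beta(p,n)$. If the two coefficients share a strict sign, then since $\int|B|^2,\int|E|^2\ge0$ the identity forces $B\equiv0$ and $E\equiv0$ on $M_1$; a Bochner-flat Kähler--Einstein manifold has constant holomorphic sectional curvature, and matching total scalar curvature with equal volumes pins the constant to $c$. I expect $\beta(p,n)>0$ throughout the range $p\le n$, so the real work is to locate where $\alpha(p,n)\le0$. After clearing denominators, $\alpha(p,n)$ should reduce to an explicit quadratic polynomial in $(p,n)$ whose zero set is a conic; the argument fails exactly at the integer pairs with $p\le n$ on the wrong side of this conic. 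For each fixed $p$ these are finite, giving at most one bad value of $n$, and the fact that the linear map $(p,n)\mapsto(8n-5p+1,\,19n-12p+3)$ has matrix of determinant $(-5)(19)-(8)(-12)=1$ — an element of $SL_2(\mathbb{Z})$ — explains why the bad pairs organize into the single orbit recorded by (\ref{recursiveformula}) with base point $(p_2,n_2)=(20,48)$. The small anomalies (the constraint $n\ge6$ at $p=1$ and the isolated exception $(p,n)=(2,8)$) would be read off directly from the quadratic.

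The essential feature is that the cohomologically-Einstein hypothesis is exactly what converts the a priori independent quantities $\int s^2\,dV$ and $\int|E|^2\,dV$ into a single controllable combination; without it the scalar-variance term survives and the sign argument cannot close. The Einstein case (Proposition \ref{main result-proposition}) is the degenerate instance $E\equiv0$ of this scheme, and the Riemannian Theorem \ref{theorem1} follows the same template, with the Weyl tensor and the gradient-shrinking-soliton identity playing the roles of the Bochner tensor and the Chern-class identity.
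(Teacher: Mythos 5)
Your proposal follows the paper's own route essentially step for step: Patodi's formula (\ref{patodiformula}) rewritten via the K\"{a}hler curvature decomposition as (\ref{a2p complex}); volume and total scalar curvature matched from $a_{0,p}$ and $a_{1,p}$; the cohomologically-Einstein hypothesis used exactly as you describe, through $\int_{M_1}c_1^2(M_1)\wedge[\omega_1]^{n-2}=\big(\int_{M_1}c_1(M_1)\wedge[\omega_1]^{n-1}\big)^2\big/\int_{M_1}\omega_1^n$, to trade the scalar-variance term for $\int|\mathring{\text{\rm Ric}}|^2$ (the paper's (\ref{key2}), which is your displayed identity up to a normalization factor of $2$); and the final reduction to a two-term identity $\alpha\int|B|^2+\beta\int|\mathring{\text{\rm Ric}}|^2=0$ with a sign analysis (the paper's (\ref{a2b}) and Proposition \ref{prop2}). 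Your claim that $\beta>0$ throughout $p\le n$ is exactly what the paper proves in the lemma following Proposition \ref{prop2}.

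However, there is a genuine gap in the step you call the main obstacle. You attribute the recursion (\ref{recursiveformula}) and the infinite family of exceptions $(p_i,n_i)$ to the zero/sign locus of the Bochner coefficient $\alpha(p,n)$, and the anomalies at $p=1$ and $(p,n)=(2,8)$ to the same ``quadratic.'' This is backwards: the two phenomena have different sources. The family $(p_i,n_i)$ comes from the vanishing of the coefficient $p^2-2np+\tfrac{n(2n-1)}{3}$ multiplying $\int s\,dV$ in $a_{1,p}$ --- precisely the hypothesis you wave off as ``nonzero in the listed range.'' When this quadratic vanishes, the equality $a_{1,p}(M_1)=a_{1,p}(M_2)$ reads $0=0$, the total scalar curvature of $M_1$ cannot be pinned down, and neither your calibration identity nor the subtraction of the $a_{2,p}$'s can even be started. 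The integer zeros of this conic with $p\le n$ are exactly the Pell-equation orbit of Lemma \ref{Pell equation}, and this is where your (correct) determinant-one observation about $(p,n)\mapsto(8n-5p+1,\,19n-12p+3)$ actually belongs; you cannot invoke ``the listed range'' to justify nonvanishing, because the listed range is the \emph{output} of that Pell analysis. By contrast, the Bochner coefficient is $\alpha=4c_1$ with $c_1$ as in (\ref{patodicoefficient}); after clearing denominators it is a \emph{quartic}, not a quadratic, in $(p,m)$, and by Lemma \ref{c1} its only zero with $m=2n$, $p\le n$ is $(p,n)=(2,8)$ --- the lone exception in case (3) --- while its negativity for $p=1$, $m=2n\le 14$ is what restricts the case $p=1$ (indeed Lemma \ref{c1} gives $c_1>0$ at $p=1$ only for $m\ge16$, i.e.\ $n\ge8$, so this restriction is not ``read off'' from any conic). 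Consequently, if you ran the sign analysis of $\alpha$ as planned, you would find no trace of the orbit $(p_i,n_i)$, and your argument would appear to close at those pairs --- where in fact it breaks down already at the $a_{1,p}$ stage.
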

Consequently, Theorem \ref{theorem2} can be carried over to yield the same result when the HSC $c>0$, which amounts to $(M_2,g_2,J_2)=(\mathbb{C}P^n(c),g_0,J_0)$. In this situation we can, however, do one more case. Note that the exceptional case $(p=2,n=8)$ is not able to be dealt with in Theorem \ref{theorem2} due to the vanishing of a coefficient in the proof, which would be clear later. Nevertheless, thanks to a recent result due to Fujita (\cite{Fu}) solving a long-standing conjecture in complex geometry, the difficulty in this exceptional case for $(\mathbb{C}P^n(c),g_0,J_0)$ can be successfully overcome, which has been explained in \cite{Li} and shall be briefly reviewed again at the end of Section \ref{proof3}. In summary, we have the following partial affirmative answer towards Question \ref{Q2}.
\begin{corollary}\label{coro1}
Suppose that $(M,g,J)$ is a complex $n$-dimensional compact cohomologically Einstein K\"{a}hler manifolds such that $$a_{i,p}(M_1,g_1)=a_{i,p}(\mathbb{C}P^n(c),g_0),\qquad i=0,1,2,$$
for a fixed $p$ with $p\leq n$. Then $(M,g,J)$ is holomorphically isometric to $(\mathbb{C}P^n(c),g_0,J_0)$ if the pair $(p,n)$ satisfies one of the following cases:
\begin{enumerate}
\item
($p=0$, all dimensions $n$);

\item
($p=1$, all dimensions $n\geq 6$);

\item
($p=2$,
all dimensions $n$);

\item
($p\geq 3$ and $p\not\in\{p_i~|~i\geq 2\}$, all dimensions $n$);

\item
($p=p_i$, all dimensions $n$ with at most one exception $n=n_i$)~\rm($i\geq 2$\rm),
\end{enumerate}
where $(p_i,n_i)$ \rm($i\geq 2$\rm) are determined by (\ref{recursiveformula}).
\end{corollary}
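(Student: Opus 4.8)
The plan is to obtain Corollary \ref{coro1} from Theorem \ref{theorem2} wherever the latter applies, and to settle the single remaining pair by hand. Note first that $(\mathbb{C}P^n(c),g_0,J_0)$ has \emph{positive} constant HSC $c$, so choosing $(M_2,g_2,J_2)=(\mathbb{C}P^n(c),g_0,J_0)$ meets the hypotheses of Theorem \ref{theorem2}. For every pair $(p,n)$ listed in Corollary \ref{coro1} other than $(p,n)=(2,8)$ --- and these are exactly the pairs admitted by Theorem \ref{theorem2} --- that theorem yields that $(M,g,J)$ has constant HSC $c$. Since $c>0$, the uniformization theorem recalled just before Question \ref{Q2} forces $(M,g,J)$ to be holomorphically isometric to $(\mathbb{C}P^n(c),g_0,J_0)$. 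This disposes of cases (1), (2), (4), (5) and of case (3) for every $n\neq 8$.

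The only pair in Corollary \ref{coro1} beyond the reach of Theorem \ref{theorem2} is $(p,n)=(2,8)$, where the proof of that theorem stalls because a coefficient in the equation produced by $a_{2,2}$ vanishes precisely at $n=8$. I would first unwind the three identities $a_{i,2}(M,g)=a_{i,2}(\mathbb{C}P^8(c),g_0)$ for $i=0,1,2$. Writing $m=16$ and using the cohomologically-Einstein relation $c_1=\gamma[\omega]$ to convert the Kähler curvature integrals entering Patodi's heat coefficients into the Chern numbers $\int_M c_1^{8}$ and $\int_M c_1^{6}c_2$, the identities for $i=0$ and $i=1$ should fix the volume and the constant $\gamma$ to be those of $\mathbb{C}P^8(c)$; in particular $\gamma>0$, so $M$ is Fano, and $\int_M c_1^{8}=\gamma^{8}\cdot 8!\,\mathrm{Vol}(M,g)=9^{8}=(-K_{\mathbb{P}^8})^{8}$. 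The identity for $i=2$, whose coefficient in front of $\int_M c_1^{6}c_2$ is exactly the one degenerating at $n=8$, supplies nothing further, which is precisely why the constant-HSC conclusion cannot be extracted from the spectral data alone.

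This is the point at which Fujita's theorem (\cite{Fu}) --- resolving the long-conjectured sharp bound $(-K_X)^{n}\leq (n+1)^{n}$, with equality if and only if $X\cong\mathbb{P}^{n}$ --- takes over. Applied to the Fano manifold $M$, whose anticanonical degree $9^{8}$ is maximal, it identifies $M$ biholomorphically with $\mathbb{P}^{8}$. Once the complex structure is known, $\int_M c_1^{6}c_2$ is topologically pinned to the value for $\mathbb{P}^{8}$, and the remaining task is to upgrade the biholomorphism to a holomorphic isometry with $(\mathbb{C}P^8(c),g_0,J_0)$, combining the matched $a_{i,2}$ data with these now-determined Chern numbers exactly as carried out in \cite{Li} and to be reviewed at the end of Section \ref{proof3}.

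The entire difficulty is concentrated in this last pair. Reducing to Theorem \ref{theorem2} and quoting uniformization is routine; the genuine obstacle is to verify that the degenerate $(2,8)$ heat data really places $M$ within the scope of Fujita's rigidity --- that is, that the Fano condition, the extremal degree, and whatever stability hypothesis \cite{Fu} requires are all in force --- and then to promote $M\cong\mathbb{P}^{8}$ to a metric isometry. This delicate step is exactly the content imported from \cite{Li}.
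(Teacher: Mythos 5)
Your reduction of every case except $(p,n)=(2,8)$ to Theorem \ref{theorem2} followed by the uniformization theorem is exactly the paper's argument and is correct. The gap lies entirely in your treatment of the exceptional pair $(p,n)=(2,8)$, and it is a genuine one, not a technicality.

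You assert that the $i=2$ identity ``supplies nothing further,'' and you then apply Fujita's theorem to $M$ knowing only that it is Fano with anticanonical degree $9^8$. Both steps are wrong, and the second is fatal. First, the coefficient that vanishes at $(p,n)=(2,8)$ is $c_1=c_1(2,16)$ (Lemma \ref{c1}), which in (\ref{a2b}) multiplies the \emph{Bochner} term $\int_{M}|B(g)|^2$; the coefficient of $\int_{M}|\mathring{\text{Ric}}(g)|^2$ in (\ref{a2b}), namely $\frac{2n}{n-1}\big[\frac{4n+2}{(n+1)(n+2)}c_1+\frac{1}{2}c_2+c_3\big]$, remains strictly positive there (last lemma of Section \ref{proof3}, with $c_1=0$). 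So the $a_{2,2}$-identity does supply the decisive fact $\mathring{\text{Ric}}(g)\equiv0$: the metric $g$ is K\"ahler--Einstein, hence Fano K\"ahler--Einstein since $\gamma>0$. Second, Fujita's theorem is \emph{not} a statement about arbitrary Fano manifolds: its hypothesis is K-semistability (which K\"ahler--Einstein Fano manifolds satisfy), and without it the bound $(-K_X)^n\le(n+1)^n$ is simply false. For example, $X=\mathbb{P}\big(\mathcal{O}_{\mathbb{P}^{n-1}}\oplus\mathcal{O}_{\mathbb{P}^{n-1}}(n-1)\big)$ is Fano with $(-K_X)^n=\frac{(2n-1)^n-1}{n-1}$, which exceeds $(n+1)^n$ for every $n\ge4$; for $n=8$ this is $(15^8-1)/7\approx3.7\times10^8\gg9^8$. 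Nor does cohomological Einstein-ness substitute for stability: Tian's Fano threefolds with $b_2=1$ admitting no K\"ahler--Einstein metric are cohomologically Einstein yet K-unstable. So with the Einstein condition discarded, the appeal to \cite{Fu} has no basis, and the ``extremal degree'' by itself characterizes nothing. The repair is precisely the step you threw away, and it is how the paper argues: extract $\mathring{\text{Ric}}(g)\equiv0$ from (\ref{a2b}), note $s_g=n(n+1)c>0$ and $\text{Vol}(M,g)=\text{Vol}(\mathbb{C}P^8(c),g_0)$ from (\ref{a01}), and then invoke the equivalent form of Fujita's result recorded as \cite[Theorem 2.2]{Li}: a Fano K\"ahler--Einstein manifold whose scalar curvature and volume agree with those of $(\mathbb{C}P^n(c),g_0,J_0)$ is holomorphically \emph{isometric} to it. This also resolves, in one stroke, the passage from a biholomorphism to a holomorphic isometry, which your sketch leaves vague and which again requires the Einstein property you had not established.
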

When $p$ is \emph{even} and \emph{positive} and $\text{Spec}^p(M,g)=\text{Spec}^p(\mathbb{C}P^n(c),g_0)$, it turns out in \cite[Lemma 4.3]{Li} that the second Betti number $b_2(M)=1$ due to the Hard Lefschetz theorem and hence the condition of $(M,g,J)$ being cohomologically Einstein is automatically satisfied.
Also note that the positive integers $p_i$ determined by the recursive formula (\ref{recursiveformula1}) are even if and only if $i$ are even. Hence we have the following affirmative answer to Question \ref{Q2} for the following $(p,n)$ \emph{without any extra condition}, which is precisely the main result in \cite{Li}.
\begin{corollary}\label{coro2}
Assume that $p$ is even, positive and $p\leq n$. Then
\begin{enumerate}
\item
for $p=2$, Question \ref{Q2}
holds true in all dimensions $n$;

\item
for $p\geq 4$ and $p\not\in\{p_{2i}~|~i\geq 1\}$, Question \ref{Q2} holds true in all dimensions $n$;

\item
for $p=p_{2i}$, Question \ref{Q2} holds true in all dimensions $n$ with at most one exception $n=n_{2i}$.~\rm($i\geq 1$\rm),
\end{enumerate}
where $(p_{2i},n_{2i})$ are determined by (\ref{recursiveformula}).
\end{corollary}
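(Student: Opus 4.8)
The plan is to deduce Corollary~\ref{coro2} from Corollary~\ref{coro1} by showing that, once $p$ is even and positive, the cohomologically-Einstein hypothesis becomes automatic under the isospectral assumption of Question~\ref{Q2}, so that no extra condition is in fact needed. The first step is to pass from spectra to heat coefficients: the equality $\text{Spec}^p(M,g)=\text{Spec}^p(\mathbb{C}P^n(c),g_0)$ forces $a_{i,p}(M,g)=a_{i,p}(\mathbb{C}P^n(c),g_0)$ for all $i$, in particular for $i=0,1,2$, directly from the Minakshisundaram-Pleijel expansion~(\ref{mpgformula}), whose coefficients are spectral invariants. This supplies exactly the analytic input that Corollary~\ref{coro1} requires.

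Next I would show that $(M,g,J)$ is cohomologically Einstein. Since $0\in\text{Spec}^p$ with multiplicity $b_p$, the isospectral assumption gives $b_p(M)=b_p(\mathbb{C}P^n(c))=1$, using that $p$ is even and $p\leq n$. By the Hard Lefschetz theorem, cup product with the K\"ahler class induces injections $H^0(M)\hookrightarrow H^2(M)\hookrightarrow\cdots\hookrightarrow H^p(M)$, each step being legitimate because $p\leq n$; hence $1=b_0(M)\leq b_2(M)\leq\cdots\leq b_p(M)=1$, so $b_2(M)=1$. As recalled in the discussion preceding Corollary~\ref{coro2}, $b_2=1$ makes the first Chern class and the K\"ahler class proportional, i.e.\ $(M,g,J)$ is cohomologically Einstein; this is exactly \cite[Lemma 4.3]{Li}.

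With the cohomologically-Einstein condition secured, Corollary~\ref{coro1} applies and yields the holomorphic isometry $(M,g,J)\cong(\mathbb{C}P^n(c),g_0,J_0)$ demanded by Question~\ref{Q2}; it then remains only to match its case list against the restriction that $p$ be even and positive. Reducing the recursion $p_{i+1}=8n_i-5p_i+1$ of~(\ref{recursiveformula}) modulo $2$ gives $p_{i+1}\equiv p_i+1$, so the parity of $p_i$ alternates, and since $p_1=1$ is odd, $p_i$ is even exactly when $i$ is even. Thus an even positive $p$ lies in $\{p_i\mid i\geq2\}$ if and only if $p=p_{2i}$ for some $i\geq1$, and the translation reads: $p=2$ falls under Corollary~\ref{coro1}(3) with no exceptional dimension, the $n=8$ obstruction having been removed via Fujita's theorem~\cite{Fu}; every even $p\geq4$ outside $\{p_{2i}\}$ falls under Corollary~\ref{coro1}(4); and every even $p=p_{2i}$ falls under Corollary~\ref{coro1}(5) with sole possible exception $n=n_{2i}$. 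These are precisely items (1)--(3) of Corollary~\ref{coro2}. The substantive content thus sits in Corollary~\ref{coro1} (hence in Theorem~\ref{theorem2} together with Fujita's theorem), so the only obstacle peculiar to Corollary~\ref{coro2} is this parity-and-indexing bookkeeping, which I expect to be the delicate point: one must be certain that the exceptional dimensions attached to even $p$ are exactly the $n_{2i}$ and that no even $p<p_2=20$ secretly coincides with an exceptional value.
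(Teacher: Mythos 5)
Your proof is correct and takes essentially the same route as the paper: the paper deduces Corollary \ref{coro2} from Corollary \ref{coro1} by invoking \cite[Lemma 4.3]{Li} (the Hard Lefschetz argument forcing $b_2(M)=1$, hence cohomological Einsteinness, when $p$ is even, positive and $\text{Spec}^p(M,g)=\text{Spec}^p(\mathbb{C}P^n(c),g_0)$) together with the observation that the $p_i$ of (\ref{recursiveformula}) are even exactly when $i$ is even. Your explicit mod-$2$ computation and Lefschetz chain merely spell out what the paper cites, and in passing they correct the paper's slip of pointing to (\ref{recursiveformula1}) rather than (\ref{recursiveformula}) for the parity claim.
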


\begin{remark}\label{remarks}
%\begin{enumerate}
%\item
%As previously mentioned, Chen and Vanhecke settled Question \ref{Q2} in \cite{CV} for the cases ($p=2,$ all $n$ except $n=8$) in Corollary \ref{coro2}. The exceptional case $(p=2,n=8)$ left in \cite{CV} was treated by Goldberg in \cite{Go}.
%The main result in Corollary \ref{coro1} was also claimed by Gauchman and Goldberg in \cite[Theorem 1]{GG}. Unfortunately the proofs in \cite{Go} and \cite{GG} contain several gaps, which have been clarified in \cite{Li} (\cite[\S 2.3, Remark 4.2]{Li}). Nevertheless, the proofs in \cite{Go} and \cite{GG} still contain invaluable ideas, which, together with the recent result of Fujita in \cite{Fu}, inspired our work \cite{Li}.

%\item
Theorem \ref{theorem2} and Corollaries \ref{coro1} and \ref{coro2} were obtained by the first author in a preprint \cite{Li2} posted on the arXiv in April 2018. For some reason it was not intended to be published. After collaborating with the second and third authors, we decide to put these results here. We also remark that Corollary \ref{coro1} was also reproved in \cite[Theorem 3.1]{HLXZ} by a somewhat different approach.
%\end{enumerate}
\end{remark}

Note that in Proposition \ref{main result-proposition}, Theorems \ref{theorem1}, \ref{theorem2}and Corollary \ref{coro1} above what we actually require are equalities of the first three $a_{i,p}$ in the asymptotic expansion (\ref{mpgformula}), to which the isospectral condition $\text{Spec}^p(M_1,g_1)=\text{Spec}^p(M_2,g_2)$ is only  \emph{sufficient} to lead. Next we shall discuss a weaker almost-isospectral condition. The following notion was introduced in \cite{WU}.

\begin{definition}
For some fixed $p$, the two $p$-spectral sets  $\{\lambda_{i,p}^{(1)}~|~i\geq1\}$ and $\{\lambda_{i,p}^{(2)}~|~i\geq1\}$ of Riemannian manifolds $(M_1,g_1)$ and $(M_2,g_2)$ are called \emph{$\alpha$-isospectral} ($\alpha\in\mathbb{R}$) if
\be\label{spectral}
\limsup_{i\rightarrow \infty}\frac{\big|\lambda_{i,p}^{(1)}-\lambda_{i,p}^{(2)}\big|}
{i^{-\alpha}}<\infty.
\ee
\end{definition}
The following fact shows that the first few coefficients $a_{i,p}$ of two $\alpha$-isospectral Riemannian manifolds are equal.
\begin{proposition}\label{almost isospectral lemma}
If two $m$-dimensional Riemannian manifolds $(M_1,g_1)$ and $(M_2,g_2)$ are $\alpha$-isospectral for some $p$, then
$$a_{i,p}(M_1,g_1)=a_{i,p}(M_2,g_2),\qquad i<1+\frac{m}{2}\min\{\alpha,1\}.$$
\end{proposition}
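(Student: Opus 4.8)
The plan is to compare the two heat traces directly. Write $D(t):=\text{Trace}(e^{-t\Delta_p^{(1)}})-\text{Trace}(e^{-t\Delta_p^{(2)}})$, where $\Delta_p^{(k)}$ denotes the $p$-Laplacian of $(M_k,g_k)$. On one hand, subtracting the two Minakshisundaram--Pleijel expansions (\ref{mpgformula}) gives, for every $N$,
$$D(t)=\frac{1}{(4\pi t)^{\frac{m}{2}}}\sum_{j=0}^{N}\big[a_{j,p}(M_1,g_1)-a_{j,p}(M_2,g_2)\big]t^{j}+O\big(t^{N-\frac{m}{2}+1}\big),\qquad t\downarrow0.$$
On the other hand, I will bound $\abs{D(t)}$ from above using the $\alpha$-isospectral hypothesis and show that it decays like a definite power of $t$; matching the two descriptions then forces the low-order coefficient differences to vanish.

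For the upper bound I would estimate $D(t)=\sum_i\big(e^{-\lambda_{i,p}^{(1)}t}-e^{-\lambda_{i,p}^{(2)}t}\big)$ termwise, the sum being over all eigenvalues counted with multiplicity. By the mean value theorem each summand equals $-t\,(\lambda_{i,p}^{(1)}-\lambda_{i,p}^{(2)})\,e^{-\xi_it}$ for some $\xi_i$ between $\lambda_{i,p}^{(1)}$ and $\lambda_{i,p}^{(2)}$, so that
$$\abs{D(t)}\le t\sum_i \abs{\lambda_{i,p}^{(1)}-\lambda_{i,p}^{(2)}}\,e^{-\min(\lambda_{i,p}^{(1)},\lambda_{i,p}^{(2)})\,t}.$$
The condition (\ref{spectral}) supplies a constant $C$ and an index $i_0$ with $\abs{\lambda_{i,p}^{(1)}-\lambda_{i,p}^{(2)}}\le C i^{-\alpha}$ for $i\ge i_0$, while Weyl's law for $(M_2,g_2)$ gives $\lambda_{i,p}^{(2)}\ge c_2\,i^{2/m}$, whence $\min(\lambda_{i,p}^{(1)},\lambda_{i,p}^{(2)})\ge c\,i^{2/m}$ for large $i$ and some $c>0$. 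The finitely many terms with $i<i_0$ contribute only $O(t)$, so everything reduces to controlling
$$\Phi(t):=\sum_{i\ge i_0} i^{-\alpha}\,e^{-c\,i^{2/m}t}.$$

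The crux is the behaviour of $\Phi(t)$ as $t\downarrow0$, which I would obtain by comparing the sum with $\int^{\infty}x^{-\alpha}e^{-cx^{2/m}t}\,dx$ and substituting $u=c\,x^{2/m}t$. This turns the integral into $(\text{const})\,t^{-\frac{m}{2}(1-\alpha)}\int_{u_0}^{\infty}e^{-u}u^{\frac{m}{2}(1-\alpha)-1}\,du$ with $u_0\downarrow0$, and the last integral splits into three regimes: for $\alpha<1$ it converges to $\Gamma\big(\tfrac{m}{2}(1-\alpha)\big)$, giving $\Phi(t)=O\big(t^{-\frac{m}{2}(1-\alpha)}\big)$; for $\alpha=1$ it produces a logarithm, giving $\Phi(t)=O\big(\log\tfrac{1}{t}\big)$; and for $\alpha>1$ it is $O(1)$. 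In all cases this yields $\abs{D(t)}=O(t^{\sigma})$, up to a $\log\tfrac{1}{t}$ factor when $\alpha=1$, where $\sigma:=1-\tfrac{m}{2}\big(1-\min\{\alpha,1\}\big)$.

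Finally I would combine the two descriptions of $D(t)$. If some difference $a_{j,p}(M_1,g_1)-a_{j,p}(M_2,g_2)$ with $j<\sigma+\tfrac{m}{2}$ were nonzero, the smallest such index $j$ would make $D(t)$ comparable to $t^{j-\frac{m}{2}}$ with exponent strictly below $\sigma$, contradicting $\abs{D(t)}=O(t^{\sigma})$; the harmless logarithm in the borderline case $\alpha=1$ does not affect this comparison at integer thresholds. Hence $a_{j,p}(M_1,g_1)=a_{j,p}(M_2,g_2)$ for all integers $j<\sigma+\tfrac{m}{2}=1+\tfrac{m}{2}\min\{\alpha,1\}$, as claimed. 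I expect the main obstacle to be the precise power bookkeeping in the estimate of $\Phi(t)$---extracting the clean exponent $-\tfrac{m}{2}(1-\alpha)$ and checking that the logarithmic loss at $\alpha=1$ is immaterial for the integer conclusion---together with securing the Weyl lower bound $\min(\lambda_{i,p}^{(1)},\lambda_{i,p}^{(2)})\ge c\,i^{2/m}$ with a uniform constant.
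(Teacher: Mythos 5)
Your proposal is correct and is essentially the paper's own proof (Section \ref{proof4}): the same mean-value-theorem termwise bound $\lvert D(t)\rvert\le Ct\sum_i i^{-\alpha}e^{-\lambda_{i,p}t}$ with $\lambda_{i,p}=\min\{\lambda_{i,p}^{(1)},\lambda_{i,p}^{(2)}\}$, the same three-regime estimate of this weighted heat sum ($O(1)$ for $\alpha>1$, $O(t^{-\frac{m}{2}(1-\alpha)})$ for $\alpha<1$, $O(\ln\frac{1}{t})$ for $\alpha=1$) obtained from Weyl's law via integral comparison, and the same contradiction at the smallest index of disagreement, exactly as in the paper's Lemmas \ref{V1=V2} and \ref{lastlemma}. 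The one point to tighten is your Weyl lower bound: the estimate on $\lambda_{i,p}^{(2)}$ alone does not bound the minimum, so invoke Weyl's law for $M_1$ as well (or use the hypothesis together with $\alpha>-\frac{2}{m}$, the only case where the statement is non-vacuous), which is what the paper does by first establishing $\mathrm{Vol}(M_1,g_1)=\mathrm{Vol}(M_2,g_2)$.
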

\begin{remark}\label{remark iso}
This fact is meaningful only when $\alpha>-\frac{2}{m}$.
The $p=0$ case of Proposition \ref{almost isospectral lemma} was announced in \cite[\S 3]{WU}. However, we have difficulties in understanding his proof. So we provide a proof ourselves in Section \ref{proof4}.
\end{remark}

By Proposition \ref{almost isospectral lemma}, when $m\geq3$ and $\alpha>\frac{2}{m}$, the first three coefficients $a_{0,p}$, $a_{1,p}$ and $a_{2,p}$ are equal. Hence we have
\begin{theorem}\label{almost isospectral theorem}
When $m\geq3$ (resp. $n\geq2$) and $\alpha>\frac{2}{m}$ (resp. $\alpha>\frac{1}{n}$), the conclusions in Proposition \ref{main result-proposition} and Theorem \ref{theorem1} (resp. Theorem \ref{theorem2} and Corollary \ref{coro1}) remain true if the condition ``$a_{i,p}(M_1,g_1)=a_{i,p}(M_2,g_2)~(i=0,1,2)$" is replaced by that of $\alpha$-isospectra.
\end{theorem}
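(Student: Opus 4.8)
The plan is to deduce this result directly from Proposition \ref{almost isospectral lemma} together with the already-established Proposition \ref{main result-proposition}, Theorem \ref{theorem1}, Theorem \ref{theorem2} and Corollary \ref{coro1}. The key observation is that $\alpha$-isospectrality is merely a convenient hypothesis from which the genuinely needed input---namely the equality of the first three heat coefficients $a_{0,p}$, $a_{1,p}$, $a_{2,p}$---can be extracted; once those three equalities are in hand, the conclusions follow verbatim, since all the earlier results were \emph{stated} in terms of the coefficients $a_{i,p}$ rather than the full spectrum.

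First, for the Riemannian case I would invoke Proposition \ref{almost isospectral lemma}, which guarantees $a_{i,p}(M_1,g_1)=a_{i,p}(M_2,g_2)$ for every index $i<1+\frac{m}{2}\min\{\alpha,1\}$. To cover the range $i=0,1,2$ it therefore suffices to verify the single inequality
\[
2<1+\frac{m}{2}\min\{\alpha,1\},\qquad\text{equivalently}\qquad \min\{\alpha,1\}>\frac{2}{m}.
\]
I would check this from the hypotheses $m\geq3$ and $\alpha>\frac{2}{m}$ by a short case split: if $\alpha\geq1$ then $\min\{\alpha,1\}=1>\frac{2}{3}\geq\frac{2}{m}$, whereas if $\frac{2}{m}<\alpha<1$ then $\min\{\alpha,1\}=\alpha>\frac{2}{m}$ directly. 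In either case the three coefficient equalities hold, so Proposition \ref{main result-proposition} and Theorem \ref{theorem1} apply with no further change.

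For the K\"{a}hler case I would simply set $m=2n$, the underlying real dimension, and repeat the argument: the hypothesis $\alpha>\frac{1}{n}$ is exactly $\alpha>\frac{2}{2n}=\frac{2}{m}$, and $n\geq2$ forces $m=2n\geq4\geq3$. Hence the same inequality $\min\{\alpha,1\}>\frac{2}{m}$ holds, again yielding equality of $a_{0,p}$, $a_{1,p}$, $a_{2,p}$, after which Theorem \ref{theorem2} and Corollary \ref{coro1} apply verbatim.

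The entire substance of this statement resides in Proposition \ref{almost isospectral lemma}, whose proof is deferred to Section \ref{proof4}; the present theorem is essentially a bookkeeping corollary. Accordingly I expect no genuine obstacle here beyond correctly matching the index range $i=0,1,2$ against the bound $1+\frac{m}{2}\min\{\alpha,1\}$ and translating faithfully between the complex dimension $n$ and the real dimension $m=2n$ in the K\"{a}hler setting.
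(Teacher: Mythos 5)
Your proposal is correct and is essentially identical to the paper's own argument: the paper also derives the theorem immediately from Proposition \ref{almost isospectral lemma} by noting that $m\geq3$ and $\alpha>\frac{2}{m}$ (with $m=2n$ in the K\"{a}hler case) force $2<1+\frac{m}{2}\min\{\alpha,1\}$, so the equalities $a_{i,p}(M_1,g_1)=a_{i,p}(M_2,g_2)$ for $i=0,1,2$ hold and the earlier results apply verbatim. Your explicit case split on $\alpha\geq1$ versus $\alpha<1$ just spells out the detail the paper leaves implicit.
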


\section{Preliminaries}\label{preliminaries}
In this section some necessary preliminaries are set up for our later purpose.

\subsection{Curvature decomposition}\label{subsection3.1}
Denote by $\mathring{\text{Ric}}(g)$ the traceless Ricci tensor  $$\mathring{\text{Ric}}(g):=\text{Ric}(g)-\frac{s_g}{m}g,$$ where $s_g$ is the scalar curvature function of $g$. Hence $g$ is Einstein if and only if $\mathring{\text{Ric}}(g)\equiv0$.

Recall that the $(0,4)$-type curvature tensor $R$ of an $m$-dimensional Riemannian manifold $(M,g)$ splits naturally into
three irreducible components under the orthogonal group (\cite[p. 47-48]{Be}):
\be\label{decompostion1}
\begin{split}
R&=\frac{s_g}{2m(m-1)}g\circ g+\frac{1}{m-2}\mathring{\text{Ric}}(g)\circ g+W\\
&=:S+P+W,
\end{split}\ee
where $W$ is called the Weyl curvature tensor and $``\circ"$ is the Kulkarni-Nomizu product
$$
A\circ B(x,y,z,w)=A(x,z)B(y,w)-A(x,w)B(y,z)+A(y,w)B(x,z)-A(y,z)B(x,w).
$$
Thus $S$ and $P$ involve the scalar curvature part and the traceless Ricci tensor part respectively, and $W$ exists as a nontrivial summand only when $m\geq 4$. The following facts are well-known.
\begin{lemma}\label{normrelation1}
The (pointwise) squared norms of $S$, $P$, $\text{\rm Ric}(g)$ and $\mathring{\text{\rm Ric}}(g)$ are related by
\begin{eqnarray}\label{normrelation11}
\left\{ \begin{array}{ll} |S|^2=\frac{2s_g^2}{m(m-1)},&(m\geq 2)\\
~\\
|P|^2=\frac{4}{m-2}|\mathring{\text{\rm Ric}}(g)|^2,&(m\geq 3)\\
~\\
|{\rm Ric}(g)|^2=|\mathring{\text{\rm Ric}}(g)|^2+\frac{s_g^2}{m}.\\
\end{array}\right.
\end{eqnarray}
%In particular, $|\text{\rm Ric}(g)|^2\geq\frac{s_g^2}{m}$ and with equality if and only if $g$ is Einstein.
The metric $g$ is of constant sectional curvature (resp. Einstein) if and only if  $P=W=0$ (resp. $P=0$). Moreover,
\be\label{normrelation33}|R|^2=|S|^2+|P|^2+|W|^2\ee
as the decomposition {\rm(\ref{decompostion1})} is orthogonal with respect to the norms.
\end{lemma}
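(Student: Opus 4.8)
The plan is to verify everything pointwise in a local orthonormal frame $\{e_i\}_{i=1}^m$, where the norm of a $(0,4)$-tensor $T$ is $|T|^2=\sum_{i,j,k,l}T(e_i,e_j,e_k,e_l)^2$ and $g_{ij}=\delta_{ij}$. The three norm identities all reduce to two model computations with the Kulkarni-Nomizu product. First I would note that $(g\circ g)_{ijkl}=2(\delta_{ik}\delta_{jl}-\delta_{il}\delta_{jk})$, so that a direct summation gives $|g\circ g|^2=8m(m-1)$; plugging into $S=\tfrac{s_g}{2m(m-1)}\,g\circ g$ yields $|S|^2=\tfrac{2s_g^2}{m(m-1)}$. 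For the second model computation, for any symmetric $(0,2)$-tensor $A$ with components $A_{ij}$ I would expand
\[
(A\circ g)_{ijkl}=A_{ik}\delta_{jl}-A_{il}\delta_{jk}+A_{jl}\delta_{ik}-A_{jk}\delta_{il}
\]
and square. The four diagonal terms each contribute $m|A|^2$; of the six cross terms, two are proportional to $(\text{tr}\,A)^2$ and hence vanish when $A=\mathring{\text{Ric}}(g)$ is trace-free, while the other four each contribute $-2|A|^2$. This gives $|A\circ g|^2=4(m-2)|A|^2$, and from $P=\tfrac{1}{m-2}\mathring{\text{Ric}}(g)\circ g$ one reads off $|P|^2=\tfrac{4}{m-2}|\mathring{\text{Ric}}(g)|^2$. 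The third identity is simply the Pythagorean theorem for the orthogonal splitting $\text{Ric}(g)=\mathring{\text{Ric}}(g)+\tfrac{s_g}{m}g$, the orthogonality being $\langle\mathring{\text{Ric}}(g),g\rangle=\text{tr}\,\mathring{\text{Ric}}(g)=0$ and $|\tfrac{s_g}{m}g|^2=\tfrac{s_g^2}{m}$ since $|g|^2=m$.

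Next I would dispose of the two characterizations. Constant sectional curvature $k$ is equivalent to $R=\tfrac{k}{2}\,g\circ g$; since then $s_g=m(m-1)k$, this is exactly $R=S$, i.e.\ $P=W=0$, and conversely $P=W=0$ forces $R=S$ to be a constant multiple of $g\circ g$, hence constant sectional curvature. For the Einstein case, $P=0$ is equivalent to $\mathring{\text{Ric}}(g)=0$ directly from $|P|^2=\tfrac{4}{m-2}|\mathring{\text{Ric}}(g)|^2$ (valid for $m\geq3$), which is the definition of Einstein.

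It remains to establish the orthogonal decomposition $|R|^2=|S|^2+|P|^2+|W|^2$. The cleanest argument is that $S$, $P$ and $W$ are the three $O(m)$-irreducible summands of the curvature tensor and are therefore mutually orthogonal; for a self-contained check I would verify that the three pairings vanish by hand. Pairing $W$ against $S$ or against $P$ contracts a pair of indices of $W$ with a Kronecker delta coming from the factor $g$ in the Kulkarni-Nomizu product, producing a trace of $W$, which is zero because the Weyl tensor is totally trace-free; and $\langle S,P\rangle$ is a multiple of $\langle g\circ g,\mathring{\text{Ric}}(g)\circ g\rangle$, which by the same index expansion reduces to a multiple of $\text{tr}\,\mathring{\text{Ric}}(g)=0$.

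The computations are elementary, so the only real obstacle is bookkeeping: correctly isolating, in the expansion of $|A\circ g|^2$, the two cross terms carrying $(\text{tr}\,A)^2$ from the four carrying $-2|A|^2$, and making sure every contraction in the orthogonality step genuinely lands on a trace of $W$ (or of $\mathring{\text{Ric}}(g)$). I expect the trace-free property of the Weyl tensor to do all the conceptual work in the orthogonality statement.
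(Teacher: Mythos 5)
Your proof is correct, but it cannot be compared line by line with the paper's, because the paper offers none: this lemma is introduced with the phrase ``the following facts are well-known,'' the decomposition (\ref{decompostion1}) being cited from Besse's \emph{Einstein Manifolds}. What you have done, therefore, is supply the self-contained verification that the paper delegates to the literature. Your two model computations check out: with the paper's convention for the Kulkarni--Nomizu product one indeed gets $|g\circ g|^2=8m(m-1)$, and for a symmetric $(0,2)$-tensor $A$ the expansion of $|A\circ g|^2$ produces four diagonal terms equal to $m|A|^2$, four cross terms equal to $-2|A|^2$ and two cross terms equal to $(\mathrm{tr}\,A)^2$, i.e.\ $|A\circ g|^2=4(m-2)|A|^2+4(\mathrm{tr}\,A)^2$, which reduces to your formula when $A=\mathring{\mathrm{Ric}}(g)$ is trace-free; likewise the orthogonality of $S$, $P$, $W$ via the total trace-freeness of the Weyl tensor and $\mathrm{tr}\,\mathring{\mathrm{Ric}}(g)=0$ is exactly what Besse's irreducibility statement encodes, so your hands-on check is a legitimate (and more elementary) substitute. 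The one place you should tighten the argument is the converse in the constant-curvature characterization: $P=W=0$ gives $R=\frac{s_g}{2m(m-1)}\,g\circ g$ where $s_g$ is a priori a \emph{function}, so what follows directly is only that the sectional curvature is constant at each point; to conclude that it is the same constant on all of $M$ you must invoke Schur's lemma, valid for $m\geq 3$ (consistent with the lemma's implicit dimension restrictions, since the $P$-term already requires $m\geq 3$). With that one sentence added, your argument is complete, and it has the advantage over the paper's citation of being verifiable without consulting the reference.
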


$(M,g)$ is now further assumed to be a K\"{a}hler manifold with complex dimension $n$ (thus $m=2n$). The K\"{a}hler curvature tensor $R^c$, which is the complexification of the Riemannian curvature tensor, also splits into three irreducible components under unitary group bearing some
resemblance to (\ref{decompostion1}) (\cite[p. 77]{Be}):
\be\label{decompostion2}R^{c}=S^{c}+P^{c}+B,\ee
where $S^{c}$, $P^{c}$ and $B$ involve respectively the scalar curvature part, the traceless Ricci tensor part and what has now become known as the Bochner curvature tensor.
The reader may refer to \cite[Lemma 3.3]{Li0} and \cite[Lemma 3.3, Prop.3.4]{Li} for (\ref{normrelation22}) in the following facts.
\begin{lemma}\label{normrelation2}
If $(M,g)$ is a complex $n$-dimensional K\"{a}hler manifold, then the (pointwise) squared norms of $R$, $R^c$, $S^c$, $P^c$ and $\mathring{\text{\rm Ric}}(g)$ are related by
\be\label{normrelation22}
|R|^2=4|R^c|^2,\qquad
|S^c|^2=\frac{s^2_g}{2n(n+1)},\qquad
|P^c|^2=\frac{2}{n+2}|\mathring{\text{\rm Ric}}(g)|^2.\ee
The K\"{a}hler metric $g$ is of constant holomorphic sectional curvature  (resp. Einstein) if and only if $S^c=P^c\equiv0$ (resp. $P^c\equiv0$). Moreover
\be\label{normrelation44}|R^c|^2=|S^c|^2+|P^c|^2+|B|^2\ee
as the decomposition {\rm(\ref{decompostion2})} is orthogonal with respect to the norms.
\end{lemma}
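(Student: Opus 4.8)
The plan is to establish everything pointwise. Fix $x\in M$ and choose a unitary frame $\{Z_1,\dots,Z_n\}$ of $T^{1,0}_xM$, so that the K\"{a}hler curvature is encoded by the complex components $R_{i\bar{j}k\bar{l}}=R(Z_i,\bar{Z}_j,Z_k,\bar{Z}_l)$; these obey the K\"{a}hler symmetries $R_{i\bar{j}k\bar{l}}=R_{k\bar{j}i\bar{l}}=R_{i\bar{l}k\bar{j}}$ together with the reality relation $\overline{R_{i\bar{j}k\bar{l}}}=R_{j\bar{i}l\bar{k}}$. With the normalization $|R^c|^2:=\sum_{i,j,k,l}|R_{i\bar{j}k\bar{l}}|^2$, the identity $|R|^2=4|R^c|^2$ is then a bookkeeping statement: writing the real orthonormal frame $\{e_\alpha,Je_\alpha\}$ in terms of $Z_i=\tfrac{1}{\sqrt2}(e_i-iJe_i)$ and $\bar{Z}_i$, substituting into $|R|^2=\sum_{A,B,C,D}R(e_A,e_B,e_C,e_D)^2$, and using the K\"{a}hler condition (which kills every block except the mixed one) produces exactly the factor $4$ from the resulting multiplicities.

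For the splitting (\ref{decompostion2}) I would invoke the $U(n)$-irreducible decomposition of the space of K\"{a}hler algebraic curvature tensors into the scalar line spanned by $S^c$, the traceless-Ricci summand $P^c$, and the Bochner summand $B$ (\cite[p.~77]{Be}); since these are inequivalent irreducibles they are mutually orthogonal, which is exactly (\ref{normrelation44}). Explicitly, $S^c$ is the multiple of the constant-HSC model $\tfrac12(\delta_{ij}\delta_{kl}+\delta_{il}\delta_{kj})$ whose Ricci trace matches $R^c$, and $P^c=\tfrac{1}{n+2}(\mathring{R}_{i\bar{j}}\delta_{kl}+\mathring{R}_{k\bar{l}}\delta_{ij}+\mathring{R}_{i\bar{l}}\delta_{kj}+\mathring{R}_{k\bar{j}}\delta_{il})$, the constant $\tfrac{1}{n+2}$ being forced by requiring the Ricci trace of $P^c$ to reproduce the complex traceless Ricci tensor $\mathring{R}_{i\bar{j}}$ (summing the four terms over a contracted index, and using $\sum_i\mathring{R}_{i\bar{i}}=0$, yields the factor $n+2$).

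The two remaining norm identities are then direct contractions. For the scalar part, $\sum_{i,j,k,l}(\delta_{ij}\delta_{kl}+\delta_{il}\delta_{kj})^2=2n(n+1)$, and feeding in the scalar-curvature normalization $s_g=2\sum_i R_{i\bar{i}}$ to fix the proportionality constant gives $|S^c|^2=\tfrac{s_g^2}{2n(n+1)}$. For the Ricci part, expanding $\sum_{i,j,k,l}|\,\mathring{R}_{i\bar{j}}\delta_{kl}+\mathring{R}_{k\bar{l}}\delta_{ij}+\mathring{R}_{i\bar{l}}\delta_{kj}+\mathring{R}_{k\bar{j}}\delta_{il}\,|^2$ and again using $\sum_i\mathring{R}_{i\bar{i}}=0$ collapses the diagonal cross-terms and leaves $4(n+2)\sum_{i,j}|\mathring{R}_{i\bar{j}}|^2$; combined with $|\mathring{\text{Ric}}(g)|^2=2\sum_{i,j}|\mathring{R}_{i\bar{j}}|^2$ (the real-versus-complex factor once more) this gives $|P^c|^2=\tfrac{2}{n+2}|\mathring{\text{Ric}}(g)|^2$. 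Finally the characterizations read off from the identification of the pieces: $g$ is Einstein iff $\mathring{\text{Ric}}(g)=0$ iff $P^c\equiv0$, and $g$ has constant HSC iff its curvature coincides with the scalar model, i.e.\ iff both $P^c$ and $B$ vanish.

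The step I expect to demand the most care is the normalization bookkeeping threaded through all of the above: pinning down the complexification constants so that $|R|^2=4|R^c|^2$, the scalar convention $s_g=2\sum_i R_{i\bar{i}}$, and the factor $2$ between the real and complex norms of $\mathring{\text{Ric}}(g)$ are mutually consistent. None of it is conceptually deep, but a single misplaced constant corrupts the coefficients in (\ref{normrelation22}); a reliable safeguard is to test against $(\mathbb{C}P^n(c),g_0)$, where $R^c=S^c$, $s_g=cn(n+1)$, and one verifies $|S^c|^2=\tfrac{c^2n(n+1)}{2}$ directly.
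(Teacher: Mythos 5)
Your computation is correct, but note that the paper itself contains no proof of this lemma: it is quoted as a package of known facts, with the $U(n)$-irreducible decomposition (\ref{decompostion2}) attributed to Besse \cite[p.~77]{Be} and the norm identities (\ref{normrelation22}), (\ref{normrelation44}) to \cite[Lemma 3.3]{Li0} and \cite[Lemma 3.3, Prop.~3.4]{Li}. What you have written out is essentially the verification that those references supply, so your route is not so much different as it is self-contained where the paper outsources. The key checkpoints all pass: the factor $\tfrac{1}{n+2}$ in $P^c$ is forced by requiring its Ricci trace to return $\mathring{R}_{i\bar{j}}$ (the contraction gives $n+1+1=n+2$); the scalar contraction $\sum_{i,j,k,l}(\delta_{ij}\delta_{kl}+\delta_{il}\delta_{kj})^2=2n(n+1)$ is right; in the expansion of $|P^c|^2$ the two cross terms pairing $\mathring{R}_{i\bar{j}}\delta_{kl}$ with $\mathring{R}_{k\bar{l}}\delta_{ij}$ (and $\mathring{R}_{i\bar{l}}\delta_{kj}$ with $\mathring{R}_{k\bar{j}}\delta_{il}$) die by $\sum_i\mathring{R}_{i\bar{i}}=0$ while the remaining four each contribute $\sum_{i,j}|\mathring{R}_{i\bar{j}}|^2$, giving your $4(n+2)\sum_{i,j}|\mathring{R}_{i\bar{j}}|^2$; and the normalization conventions $|R|^2=4|R^c|^2$, $s_g=2\sum_iR_{i\bar{i}}$, $|\mathring{\text{Ric}}(g)|^2=2\sum_{i,j}|\mathring{R}_{i\bar{j}}|^2$ are mutually consistent, as your test against the constant-HSC model (where $|S^c|^2=|R^c|^2=\tfrac{c^2n(n+1)}{2}$ and $s_g=cn(n+1)$) confirms. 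What your explicit approach buys is precisely the safeguard you identify: the coefficients $\tfrac{1}{2n(n+1)}$ and $\tfrac{2}{n+2}$ feed directly into (\ref{a2p complex}) and hence into the positivity analysis of Section \ref{proof3}, so an unverified constant would silently corrupt the main theorem.

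One further point in your favor: your characterization of constant HSC as the vanishing of \emph{both} $P^c$ and $B$ is the correct statement, and it silently corrects a misprint in the lemma as stated. The printed condition ``$S^c=P^c\equiv0$'' cannot be right, since $S^c\equiv0$ is equivalent to $s_g\equiv0$ (so it would characterize scalar-flat Einstein metrics, not complex space forms, and would fail for $\mathbb{C}P^n$ in one direction and for a Ricci-flat K\"{a}hler surface with $B\neq0$ in the other). The form you prove, $P^c=B\equiv0$, parallels the Riemannian statement $P=W=0$ of Lemma \ref{normrelation1} and is exactly what the paper uses in the proof of Proposition \ref{prop2}, where $\mathring{\text{Ric}}(g_1)=0$ and $B(g_1)=0$ are the hypotheses upgraded to constant HSC.
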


\subsection{Patodi's formulas and applications}\label{subsection3.2}
As mentioned in the Introduction, for general $p$, the following explicit expressions for $a_{0,p}$, $a_{1,p}$ and $a_{2,p}$ in (\ref{mpgformula}) were determined by Patodi (\cite[Prop. 2.1]{Pa}) and have become indispensable tools in the study of related problems in spectral geometry.
\begin{lemma}[Patodi]
The coefficients $a_{0,p}$, $a_{1,p}$ and $a_{2,p}$ in the asymptotic formula (\ref{mpgformula}) are given by
\begin{eqnarray}\label{patodiformula}
\left\{ \begin{array}{ll} a_{0,p}={m\choose p}\text{\rm Vol}(M,g),\\
~\\
a_{1,p}=\frac{(m-2)!}{p!(m-p)!}\big[p^2-mp+\frac{m(m-1)}{6}\big]\displaystyle\int_{M}s_g\text{\rm dvol},\\
~\\
a_{2,p}=\displaystyle\int_{M}\big(c_1|\text{R}|^2+c_2|
\text{\rm Ric}(g)|^2+c_3s_g^2\big)\text{\rm dvol},
\end{array}\right.
\end{eqnarray}
where
\begin{eqnarray}\label{patodicoefficient}
\left\{ \begin{array}{ll} c_1=c_1(p,m)=\frac{1}{180}{m\choose p}
-\frac{1}{12}{m-2\choose p-1}+\frac{1}{2}{m-4\choose p-2},\\
~\\
c_2=c_2(p,m)=-\frac{1}{180}{m\choose p}
+\frac{1}{2}{m-2\choose p-1}-2{m-4\choose p-2},\\
~\\
c_3=c_3(p,m)=\frac{1}{72}{m\choose p}
-\frac{1}{6}{m-2\choose p-1}+\frac{1}{2}{m-4\choose p-2}.
\end{array}\right.
\end{eqnarray}
\end{lemma}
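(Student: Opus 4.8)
The plan is to derive all three coefficients from the general heat-kernel asymptotics of a Laplace-type operator, specialized to the bundle of exterior $p$-forms, rather than from any ad hoc feature of $\Delta_p$. First I would invoke the Bochner--Weitzenb\"ock identity to write $\Delta_p=\nabla^{\ast}\nabla-E_p$ on $\Lambda^p T^{\ast}M$, where $\nabla$ is the connection induced by the Levi-Civita connection and $E_p$ is a pointwise self-adjoint curvature endomorphism (the negative of the Weitzenb\"ock curvature term acting on $p$-forms). This puts $\Delta_p$ into the canonical shape $\nabla^{\ast}\nabla-E$ to which Gilkey's universal Seeley--DeWitt formulas apply verbatim.

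Second, with $V=\Lambda^p T^{\ast}M$ of fibre dimension $\binom{m}{p}$, connection curvature $\Omega$, and $E=E_p$, and using the normalization of (\ref{mpgformula}) in which the factor $(4\pi t)^{-m/2}$ is already extracted, the coefficients read
\[
a_{0,p}=\int_M \mathrm{tr}(\mathrm{Id})\,\mathrm{dvol}=\binom{m}{p}\mathrm{Vol}(M,g),
\]
\[
a_{1,p}=\frac{1}{6}\int_M \mathrm{tr}\!\big(6E_p+s_g\,\mathrm{Id}\big)\,\mathrm{dvol},
\]
\[
a_{2,p}=\frac{1}{360}\int_M\mathrm{tr}\!\Big(60\,s_g E_p+180\,E_p^{2}+30\,\Omega_{ij}\Omega_{ij}\Big)\mathrm{dvol}+\frac{1}{360}\binom{m}{p}\int_M\big(5s_g^2-2|\mathrm{Ric}(g)|^2+2|R|^2\big)\mathrm{dvol},
\]
the divergence terms $\int\mathrm{tr}(E_{p;kk})$ and $\int\Delta s_g$ in Gilkey's formula being discarded since $M$ is closed.

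Third, everything is now reduced to three purely fibrewise trace computations over $\Lambda^p$: namely $\mathrm{tr}_{\Lambda^p}(E_p)$, $\mathrm{tr}_{\Lambda^p}(E_p^{2})$, and $\mathrm{tr}_{\Lambda^p}(\Omega_{ij}\Omega_{ij})$. Writing the curvature operator in an orthonormal frame as a derivation of the exterior algebra and expanding each trace, one obtains expressions in the universal quadratic curvature scalars $|R|^2$, $|\mathrm{Ric}(g)|^2$ and $s_g^2$ with combinatorial coefficients. These coefficients are produced by counting, for a fixed collection of $0$, $2$ or $4$ active indices, how many standard basis $p$-forms it meets; this counting is exactly what yields the three binomials $\binom{m}{p}$, $\binom{m-2}{p-1}$ and $\binom{m-4}{p-2}$ that assemble into $c_1,c_2,c_3$. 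In particular $\mathrm{tr}_{\Lambda^p}(E_p)$ comes out a single multiple of $s_g$, whose coefficient is the bracketed polynomial $p^2-mp+\tfrac{m(m-1)}{6}$ (up to the binomial prefactor), giving $a_{1,p}$ at once.

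The hard part will be this last bookkeeping: tracking how the Riemann tensor acts as a derivation on $\Lambda^p$ and carefully forming the products $E_p^{2}$ and $\Omega_{ij}\Omega_{ij}$ without sign, symmetry, or double-counting errors, so that the three combinatorial multiplicities emerge in precisely the stated pattern. Once the three traces are secured, substitution into the formulas of the second step and collection of the $|R|^2$, $|\mathrm{Ric}(g)|^2$, $s_g^2$ terms yields $a_{2,p}$. I would finally cross-check the result against the already known low-order cases $p=0$ and $p=1$ of Berger and McKean--Singer, which pins down any stray normalization constant.
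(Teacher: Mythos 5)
Your proposal is correct, but it follows a genuinely different route from the paper, which in fact gives no proof of this lemma at all: the formulas are quoted directly from Patodi's Prop.~2.1, whose original derivation is a hands-on construction of the heat parametrix specific to the de Rham complex. You instead put $\Delta_p$ into Laplace-type form via Bochner--Weitzenb\"ock, $\Delta_p=\nabla^{\ast}\nabla-E_p$ with $E_p$ the negated Weitzenb\"ock curvature operator, and then apply Gilkey's universal Seeley--DeWitt coefficients, which you quote correctly (including the legitimate discarding of the total-divergence terms $\mathrm{tr}(E_{p;kk})$ and $\Delta s_g$ on a closed manifold); this reduces the lemma to three purely fibrewise traces $\mathrm{tr}_{\Lambda^p}(E_p)$, $\mathrm{tr}_{\Lambda^p}(E_p^2)$ and $\mathrm{tr}_{\Lambda^p}(\Omega_{ij}\Omega_{ij})$, which by invariance theory must be linear combinations of $s_g$ (respectively of $s_g^2$, $|\mathrm{Ric}(g)|^2$, $|R|^2$) with exactly the binomial multiplicities you describe. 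This is the standard modern proof, and what it buys is modularity and generality: the analytic input is packaged once and for all in the universal formulas, everything specific to $p$-forms becomes finite-dimensional linear algebra, and the result is cross-checkable against the $p=0,1$ cases of Berger and McKean--Singer exactly as you propose; what Patodi's route buys is independence from that universal machinery, at the cost of much heavier direct computation. Two caveats on your write-up. First, essentially all of the real labor is compressed into your ``bookkeeping'' step: until $\mathrm{tr}(E_p^2)$ and $\mathrm{tr}(\Omega_{ij}\Omega_{ij})$ are actually expanded, the values of $c_1,c_2,c_3$ in (\ref{patodicoefficient}) are asserted rather than proved. Second, your parenthetical claim about $a_{1,p}$ is loosely worded: one has $\mathrm{tr}_{\Lambda^p}(E_p)=-\binom{m-2}{p-1}s_g$, and the bracketed polynomial emerges only after adding the $\frac{1}{6}\binom{m}{p}s_g$ term, via the identity $\frac{1}{6}\binom{m}{p}-\binom{m-2}{p-1}=\frac{(m-2)!}{p!(m-p)!}\bigl[p^2-mp+\frac{m(m-1)}{6}\bigr]$; as a sanity check, at $p=1$ (where $E_1=-\mathrm{Ric}$) your scheme yields $360\,c_1=2m-30$, $360\,c_2=180-2m$, $360\,c_3=5m-60$, in agreement with (\ref{patodicoefficient}).
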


For our later use, we first apply various identities in Subsection \ref{subsection3.1} to reformulate $a_{2,p}$ for our later use.
\begin{lemma}
We have
\be\label{a2p real}
a_{2,p}=\int_{M}\Big\{
\big[\frac{2c_1}{m(m-1)}+\frac{c_2}{m}+c_3\big]s_g^{2}+
(\frac{4c_1}{m-2}+c_2)|\mathring{\text{\rm Ric}}(g)|^{2}+c_1|W|^2\Big\}\text{\rm dvol}. \ee
If $(M,g)$ is a complex $n$-dimensional K\"{a}hler manifold, then
\be\label{a2p complex}
a_{2,p}
=\int_{M}
\Big\{\big[\frac{2c_1}{n(n+1)}+
\frac{c_2}{2n}+c_3\big]s_{g}^2+\big(\frac{8c_1}{n+2}
+c_2\big)|\mathring{\text{\rm Ric}}(g)|^2+
4c_1|B|^2\Big\}{\rm dvol}.\ee
\end{lemma}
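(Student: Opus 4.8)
The plan is to prove the two reformulations of $a_{2,p}$ by direct substitution of the norm identities from Lemmas~\ref{normrelation1} and~\ref{normrelation2} into Patodi's expression for $a_{2,p}$ in \eqref{patodiformula}, namely $a_{2,p}=\int_M\big(c_1|R|^2+c_2|\mathrm{Ric}(g)|^2+c_3 s_g^2\big)\,\mathrm{dvol}$. The strategy is to rewrite each of the three quadratic curvature invariants $|R|^2$, $|\mathrm{Ric}(g)|^2$ and $s_g^2$ in terms of the orthogonal pieces $s_g^2$, $|\mathring{\mathrm{Ric}}(g)|^2$ and $|W|^2$ (respectively $|B|^2$ in the K\"ahler case), then collect the coefficients of each independent invariant.

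For the Riemannian identity \eqref{a2p real}, first I would use \eqref{normrelation33} together with \eqref{normrelation11} to express $|R|^2$ as a combination of $s_g^2$, $|\mathring{\mathrm{Ric}}(g)|^2$ and $|W|^2$. Concretely, $|R|^2=|S|^2+|P|^2+|W|^2=\frac{2s_g^2}{m(m-1)}+\frac{4}{m-2}|\mathring{\mathrm{Ric}}(g)|^2+|W|^2$. Next I would substitute the third identity in \eqref{normrelation11}, $|\mathrm{Ric}(g)|^2=|\mathring{\mathrm{Ric}}(g)|^2+\frac{s_g^2}{m}$, into the $c_2$ term. Gathering the coefficient of $s_g^2$ yields $\frac{2c_1}{m(m-1)}+\frac{c_2}{m}+c_3$, the coefficient of $|\mathring{\mathrm{Ric}}(g)|^2$ yields $\frac{4c_1}{m-2}+c_2$, and the coefficient of $|W|^2$ is simply $c_1$. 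Integrating over $M$ gives exactly \eqref{a2p real}.

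For the K\"ahler identity \eqref{a2p complex}, the mechanism is identical but uses Lemma~\ref{normrelation2}. The one subtlety is the factor $|R|^2=4|R^c|^2$ from \eqref{normrelation22}, so I would first replace $|R|^2$ by $4|R^c|^2$, then expand $|R^c|^2=|S^c|^2+|P^c|^2+|B|^2$ via \eqref{normrelation44} and substitute $|S^c|^2=\frac{s_g^2}{2n(n+1)}$ and $|P^c|^2=\frac{2}{n+2}|\mathring{\mathrm{Ric}}(g)|^2$. This produces a factor of $4c_1$ multiplying each of the three pieces, giving $\frac{8c_1}{n(n+1)}\cdot\frac{s_g^2}{2}$, etc. The Ricci term $|\mathrm{Ric}(g)|^2=|\mathring{\mathrm{Ric}}(g)|^2+\frac{s_g^2}{m}$ still holds with $m=2n$, so $\frac{s_g^2}{m}=\frac{s_g^2}{2n}$. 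Collecting coefficients as before produces the coefficient $\frac{2c_1}{n(n+1)}+\frac{c_2}{2n}+c_3$ of $s_g^2$, the coefficient $\frac{8c_1}{n+2}+c_2$ of $|\mathring{\mathrm{Ric}}(g)|^2$, and $4c_1$ of $|B|^2$, matching \eqref{a2p complex} after integration.

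This argument is essentially a bookkeeping computation, so there is no serious conceptual obstacle; the only point demanding care is the K\"ahler normalization factor $|R|^2=4|R^c|^2$, which must be tracked consistently through every piece so that the final coefficients come out with the stated factors $\frac{2c_1}{n(n+1)}$, $\frac{8c_1}{n+2}$ and $4c_1$ rather than their Riemannian counterparts. I would verify the arithmetic by checking the Einstein specialization ($\mathring{\mathrm{Ric}}(g)\equiv0$) and, in the constant-curvature case, that the $|W|^2$ (resp.\ $|B|^2$) term drops out, which serves as an internal consistency check on the coefficient extraction.
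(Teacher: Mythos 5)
Your proposal is correct and is essentially identical to the paper's proof: substitute the norm identities of Lemmas \ref{normrelation1} and \ref{normrelation2}, together with the orthogonal decompositions (\ref{normrelation33}) and (\ref{normrelation44}) and the relation $|R|^2=4|R^c|^2$, into Patodi's formula (\ref{patodiformula}) and collect coefficients of $s_g^2$, $|\mathring{\text{\rm Ric}}(g)|^2$ and $|W|^2$ (resp.\ $|B|^2$). The only blemish is the intermediate expression $\frac{8c_1}{n(n+1)}\cdot\frac{s_g^2}{2}$, which should read $\frac{4c_1}{n(n+1)}\cdot\frac{s_g^2}{2}=\frac{2c_1}{n(n+1)}s_g^2$; this slip does not propagate, since the final coefficients you state agree with (\ref{a2p complex}).
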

\begin{proof}
Applying (\ref{normrelation11}) and (\ref{normrelation33}) to the formula $a_{2,p}$ in (\ref{patodiformula}) leads to (\ref{a2p real}). For (\ref{a2p complex}), we apply (\ref{normrelation22}) and (\ref{normrelation44}) as well as (\ref{normrelation11}).
\end{proof}

In order to utilize the formula for $a_{1,p}$ in (\ref{patodiformula}), we need to require the coefficient in front of it be nonzero. The following lemma tells us that when it exactly happens and also explains why some sporadic cases $(p_i,m_i)$ \big(resp. $(p_i,n_i)$\big) determined by (\ref{recursiveformula1}) \big(resp. (\ref{recursiveformula})\big) in Proposition \ref{main result-proposition} and Theorem \ref{theorem1} (resp. Theorem \ref{theorem2}) have to be excluded.
\begin{lemma}\label{Pell equation}
The solutions $(p,m)\in\mathbb{Z}_{\geq0}\times\mathbb{Z}_{>0}$ of the equation
\be\label{coefficient equation}p^2-mp+\frac{m(m-1)}{6}=0\ee
satisfying $p\leq[\frac{m}{2}]$ are precisely $\big\{(p_i,m_i)~|~i\in\mathbb{Z}_{\geq1}\big\}$, where $(p_i,m_i)$ are determined by the recursive formula
\begin{eqnarray}\label{p m relation}
\left\{ \begin{array}{ll}
({p}_1,{m}_1)=(0,1),\\

{p}_{i+1}={m}_i-{p}_i,\\

{m}_{i+1}=5{m}_i-6{p}_i+1.
\end{array} \right.
\end{eqnarray}
Consequently, the solutions $(q,n)\in\mathbb{Z}_{\geq0}\times\mathbb{Z}_{>0}$  of the equation
\be\label{coefficient equation2}q^2-2nq+\frac{n(2n-1)}{3}=0\ee
satisfying $q\leq n$ are precisely $\big\{(q_i,n_i)~|~i\in\mathbb{Z}_{\geq1}\big\}$, where $(q_i,n_i)$ are determined by the recursive formula
\begin{eqnarray}\label{p n relation}
\left\{ \begin{array}{ll}
({q}_1,{n}_1)=(1,3),\\

{q}_{i+1}=8{n}_i-5{q}_i+1,\\

{n}_{i+1}=19{n}_i-12{q}_i+3.
\end{array} \right.
\end{eqnarray}
\end{lemma}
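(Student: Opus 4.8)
The plan is to reduce the quadratic Diophantine equation (\ref{coefficient equation}) to a classical Pell equation. First I would read (\ref{coefficient equation}) as a quadratic in $p$: by the quadratic formula a solution exists exactly when the discriminant $3m^2+6m=3(m+1)^2-3$ is a perfect square $k^2$, and the root obeying $p\le[\frac m2]$ is $p=\frac{3m-k}{6}$. Putting $u:=m+1$ turns $k^2=3(m+1)^2-3$ into $3u^2-k^2=3$; this forces $3\mid k$, and writing $k=3\ell$ gives the Pell equation $u^2-3\ell^2=1$.

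Next I would invoke the classical description of Pell solutions: every solution in positive integers is $(u_i,\ell_i)$ with $u_i+\ell_i\sqrt3=(2+\sqrt3)^i$ ($i\ge1$), starting from the fundamental solution $(u_1,\ell_1)=(2,1)$ (the trivial solution $(1,0)$ is discarded since $m\ge1$). The step that needs the most care is checking that this correspondence is a \emph{bijection} onto the admissible pairs, i.e.\ that each Pell solution returns a legitimate $(p,m)$. Reducing $u^2-3\ell^2=1$ modulo $2$ shows $u$ and $\ell$ have opposite parity, so $p=\frac{u-1-\ell}{2}$ is automatically an integer; and $u>\ell$ (immediate from $u^2=3\ell^2+1$) yields $p\ge0$ together with $p\le\frac m2$, hence $p\le[\frac m2]$. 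Since $m=u-1\ge1$, the admissible solutions of (\ref{coefficient equation}) are precisely the images of the Pell solutions, with $(u_1,\ell_1)=(2,1)$ recovering $(p_1,m_1)=(0,1)$.

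It then remains to translate the Pell recurrence into (\ref{p m relation}). Expanding $(2+\sqrt3)^{i+1}=(2+\sqrt3)(2+\sqrt3)^i$ gives $u_{i+1}=2u_i+3\ell_i$ and $\ell_{i+1}=u_i+2\ell_i$; substituting $u_i=m_i+1$ and $\ell_i=m_i-2p_i$ into $m_{i+1}=u_{i+1}-1$ and $p_{i+1}=\frac{u_{i+1}-1-\ell_{i+1}}{2}$ produces exactly $m_{i+1}=5m_i-6p_i+1$ and $p_{i+1}=m_i-p_i$.

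Finally, for (\ref{coefficient equation2}) I would note it is (\ref{coefficient equation}) under $m=2n$, $q=p$, with the constraint $q\le n$ matching $p\le[\frac m2]=n$; its solutions are therefore exactly the $(p_i,m_i)$ with $m_i$ even. A short parity analysis of the recurrence (the residues $u_i\bmod2$ cycle as $0,1,0,1,\dots$) shows $m_i=u_i-1$ is even iff $i$ is even, so the admissible $(q,n)$ are $(q_j,n_j)=(p_{2j},m_{2j}/2)$. The recurrence (\ref{p n relation}) then follows by iterating (\ref{p m relation}) twice and halving the dimension coordinate — equivalently, by squaring the fundamental unit, $(2+\sqrt3)^2=7+4\sqrt3$ — which I would confirm by the same bookkeeping, e.g.\ $(q_1,n_1)=(1,3)\mapsto(q_2,n_2)=(20,48)$. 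The main obstacle throughout is the bijectivity and integrality verification of the second paragraph rather than the Pell theory itself, which is standard.
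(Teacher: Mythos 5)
Your proposal is correct and follows essentially the same route as the paper: both reduce (\ref{coefficient equation}) to the Pell equation $(m+1)^2-3r^2=1$ (your $u^2-3\ell^2=1$ with $u=m+1$, $k=3\ell$), describe all solutions as powers of the fundamental unit $2+\sqrt3$, translate the resulting recurrence into (\ref{p m relation}), and obtain (\ref{p n relation}) by the parity observation that $m_i$ is even iff $i$ is even. If anything, your write-up is slightly more careful than the paper's on the integrality and bijectivity checks (the opposite-parity argument for $u$ and $\ell$, and the verification $p\geq 0$, $p\leq[\frac m2]$), which the paper passes over implicitly.
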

\begin{remark}
The recursive formula (\ref{p n relation}) for the equation (\ref{coefficient equation2}) has been established in \cite[\S 5.2]{Li}.
\end{remark}
\begin{proof}
By (\ref{coefficient equation}) we know
\be\label{equ}p=\frac{m\pm\sqrt{\frac{m(m+2)}{3}}}{2}.\ee
This implies that $m(m+2)=3r^2$ for some positive integer $r$, which is equivalent to
\be\label{pell2}(m+1)^2-3r^2=1.\ee
(\ref{pell2}) is the well-known \emph{Pell's equation}. If the (positive integer) solutions are ordered by magnitude, then the $i$-th solution $(r_i,m_i)~(i\in\mathbb{Z}_{\geq1})$ can be expressed in terms of the first one $(r_1,m_1)=(1,1)$ by (\cite[p. 11]{JW})
\be m_i+1+r_i\sqrt{3}=(m_1+1+r_1\sqrt{3})^i
=(2+\sqrt{3})^i.\nonumber\ee
Thus
\be\begin{split}
m_{i+1}+1+r_{i+1}\sqrt{3}&=(2+\sqrt{3})^{i+1}\\
&=(2+\sqrt{3})(m_i+1+r_i\sqrt{3})\\
&=(2m_i+3r_i+2)+(m_i+2r_i+1)\sqrt{3}.
\end{split}\nonumber\ee
This implies that the positive integer solutions $(r_i,m_i)$ to (\ref{pell2}) are given by the recursive formula
\begin{eqnarray}\label{r m relation}
\left\{ \begin{array}{ll}
({r}_1,{m}_1)=(1,1),\\

{r}_{i+1}={m}_i+2{r}_i+1,\\

{m}_{i+1}=2{m}_i+3{r}_i+1.
\end{array} \right.
\end{eqnarray}
The condition $p\leq[\frac{m}{2}]$ and (\ref{equ}) imply that $p_i=\frac{m_i-r_i}{2}$ and hence $r_i=m_i-2p_i$. Substituting it into (\ref{r m relation}) can easily lead to (\ref{p m relation}).

The recursive formula ${m}_{i+1}=5{m}_i-6{p}_i+1$ in (\ref{p m relation}) implies that the parities of $m_{i+1}$ and $m_i$ are different and hence $m_i$ is even if only if $i$ is even as $m_1=1$ is odd. Hence $(q_i,n_i):=(p_{2i},\frac{m_{2i}}{2})$ are precisely those solutions to
$$q^2-2nq+\frac{n(2n-1)}{3}=0.$$
So some easy calculations from (\ref{p m relation}) lead to the recursive formula for $(p_{2(i+1)},m_{2(i+1)})$ in terms of $(p_{2i},m_{2i})$ and therefore lead to (\ref{p n relation}).
\end{proof}

\subsection{A linear combination of Patodi's coefficients}\label{elementary}
We shall see in later sections that the last step in the proof of each main result comes down to showing the positivity of some linear combination of the coefficients $c_i$ in (\ref{patodicoefficient}). To this end, we treat the following elementary but useful facts, which can also be found in \cite[\S 5]{Li}.

Consider a linear combination of ${m\choose p}$, ${m-2\choose p-1}$ and ${m-4\choose p-2}$:
\be\begin{split}
&\alpha{m\choose p}
+\beta{m-2\choose p-1}+\gamma{m-4\choose p-2}\qquad(\alpha,\beta,\gamma\in\mathbb{R})\\
=&\frac{(m-4)!}{p!(m-p)!}\Big[\alpha m(m-1)(m-2)(m-3)\\
&+
\beta(m-2)(m-3)p(m-p)+\gamma p(p-1)(m-p)(m-p-1)\Big]\\
=:&\frac{(m-4)!}{p!(m-p)!}f(p,m,\alpha,\beta,\gamma)\\
=:&\frac{(m-4)!}{p!(m-p)!}\Big[\alpha m(m-1)(m-2)(m-3)+
g(p,m,\beta,\gamma)\Big],
\end{split}\nonumber\ee
and note that $f$ and $g$ satisfy
\be\label{duality}f(p,m,\alpha,\beta,\gamma)=f(m-p,m,\alpha,\beta,\gamma),\qquad
g(p,m,\beta,\gamma)=g(m-p,m,\beta,\gamma).\ee

Testing the positivity of this linear combination is equivalent to that of $f(p,m,\alpha,\beta,\gamma)$, which is reduced to estimating the minimal value of $g(p,m,\beta,\gamma)$. By direct calculations we have
\begin{lemma}\label{elementary lemma}
Fix $m$, $\beta$, and $\gamma\neq0$, and view $g(p,m,\beta,\gamma)=:g(p)$ as a quadratic polynomial of \emph{real} variable $p$. Then
\be\label{first}
g'(p)=(2p-m)\big[2\gamma p^2-2\gamma mp+\gamma(m-1)-\beta(m-2)(m-3)\big]
\ee
and hence $g'(p)$ has three roots
$p_1=\frac{m}{2}$, $p_2$ and $p_3$, where $p_{2,3}$ may be complex and satisfy $p_2+p_3=m$. %and satisfy $p_2+p_3=m$ and thus $g(p_2)=g(p_3)=:g(p_{2,3})$ by (\ref{duality}).
Moreover,
\begin{eqnarray}\label{criticalpointvalue}
\left\{ \begin{array}{ll}
g''(p_1)=
(-2\beta-\gamma)m^2+(10\beta+2\gamma)m+(-12\beta-2\gamma)\\
~\\
g''(p_{2,3})=2\gamma(2p_{2,3}-m)^2.
\end{array} \right.
\end{eqnarray}
The Discriminant $\Delta$ arising from (\ref{first}) is defined to be
\be\Delta:=4\gamma^2(m^2-2m+2)+8\gamma\beta(m-2)(m-3).\ee
If $\Delta>0$, then $p_{2,3}$ are real and different from $\frac{m}{2}$ and hence the sign of $g''(p_{2,3})$ depends on that of $\gamma$.
\end{lemma}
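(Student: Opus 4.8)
The plan is to treat all the stated identities as consequences of a single structural observation: the duality $g(p,m,\beta,\gamma)=g(m-p,m,\beta,\gamma)$ recorded in (\ref{duality}) says $g$ is symmetric about $p=\frac{m}{2}$, so it should become a genuine \emph{quadratic} after the substitution that linearizes this symmetry. Concretely, I would set $u:=p(m-p)$ and first verify $p(p-1)(m-p)(m-p-1)=u^2-(m-1)u$, using $(p-1)(m-p-1)=p(m-p)-(m-1)=u-(m-1)$. This rewrites
\be
g(p)=\gamma u^2+\big[\beta(m-2)(m-3)-\gamma(m-1)\big]u,\nonumber
\ee
a true quadratic in $u$, which is the cleanest way to organize everything that follows.

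Differentiation is then just the chain rule. With $\frac{du}{dp}=m-2p=-(2p-m)$ and $\frac{dg}{du}=2\gamma u+\beta(m-2)(m-3)-\gamma(m-1)$, I would substitute $u=p(m-p)$ back in, collect terms, and read off
\be
g'(p)=(2p-m)\big[2\gamma p^2-2\gamma mp+\gamma(m-1)-\beta(m-2)(m-3)\big]=:(2p-m)\,Q(p),\nonumber
\ee
which is exactly (\ref{first}). The linear factor supplies the critical point $p_1=\frac{m}{2}$, while the remaining critical points $p_2,p_3$ are the zeros of the quadratic $Q$; Vieta's relation for $Q$ gives $p_2+p_3=m$ at once, and these roots are complex precisely when the discriminant of $Q$ is negative. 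Computing that discriminant directly yields $\Delta=4\gamma^2(m^2-2m+2)+8\gamma\beta(m-2)(m-3)$.

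For the second-derivative values I would write $g''(p)=2Q(p)+(2p-m)Q'(p)$ with $Q'(p)=2\gamma(2p-m)$. At $p_1=\frac{m}{2}$ the factor $2p-m$ annihilates the second term, so $g''(p_1)=2Q(\frac{m}{2})$; expanding $Q(\frac{m}{2})$ together with $\beta(m-2)(m-3)=\beta(m^2-5m+6)$ produces the claimed quadratic $(-2\beta-\gamma)m^2+(10\beta+2\gamma)m+(-12\beta-2\gamma)$. At $p_{2,3}$ the term $2Q(p_{2,3})$ vanishes because these are roots of $Q$, leaving $g''(p_{2,3})=(2p_{2,3}-m)Q'(p_{2,3})=2\gamma(2p_{2,3}-m)^2$, which is the second line of (\ref{criticalpointvalue}).

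The only step needing an argument rather than pure calculation is the final sign claim when $\Delta>0$. Then $Q$ has two distinct real roots; since $p_2+p_3=m$, if either equaled $\frac{m}{2}$ the other would too, contradicting distinctness, so $2p_{2,3}-m\neq0$ and hence $(2p_{2,3}-m)^2>0$. Therefore $g''(p_{2,3})=2\gamma(2p_{2,3}-m)^2$ has the same sign as $\gamma$, as asserted. I do not anticipate a genuine obstacle anywhere: the lemma is computational, and the substitution $u=p(m-p)$ is precisely what keeps the bookkeeping short and makes the factorization of $g'$ transparent rather than an algebraic accident; the one thing to watch is accuracy in expanding the products defining $\Delta$ and $g''(p_1)$.
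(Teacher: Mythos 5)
Your proposal is correct, and it is exactly the kind of argument the paper intends: the lemma is stated after the words ``By direct calculations we have,'' with no further proof supplied, and your computation verifies every claimed identity, including the factorization (\ref{first}), both lines of (\ref{criticalpointvalue}), the formula for $\Delta$, and the sign conclusion. Your substitution $u=p(m-p)$, which linearizes the symmetry (\ref{duality}) and exhibits $g$ as a genuine quadratic in $u$ (note $g$ is quartic in $p$, despite the lemma's wording), is a clean organizational device but does not change the route --- it is the direct calculation, done transparently.
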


\section{Proof of Proposition \ref{main result-proposition}}\label{proof1}
The case $(p,m)=(p_1,m_1)=(0,1)$ can be treated directly, which has been done in \cite{Ta1}. In view of Lemma \ref{Pell equation}, Proposition \ref{main result-proposition} is equivalent to the following
\begin{proposition}\label{main result-proposition again}
Let $(M_1,g_1)$ and $(M_2,g_2)$ be $m$-dimensional Riemannian manifolds such that
\be\label{three equal}a_{i,p}(M_1,g_1)=a_{i,p}(M_2,g_2),\qquad i=0,1,2\ee
 for some \emph{fixed} $p$ with $p\leq [\frac{m}{2}]$, $(M_1,g_1)$ Einstein, and $(M_2,g_2)$ of constant sectional curvature $k$. If the pair $(p,m)$ satisfies
\be\label{noneuqal}p^2-mp+\frac{m(m-1)}{6}\neq0\ee
and $(p,m)\notin\{(1,15),(2,15),(2,16)\}$, then $(M_1,g_1)$ is of constant sectional curvature $k$.
\end{proposition}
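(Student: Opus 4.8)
The plan is to peel off information from the three equalities $a_{i,p}(M_1,g_1)=a_{i,p}(M_2,g_2)$, $i=0,1,2$, in turn, feeding Patodi's formulas (\ref{patodiformula}) and the reformulation (\ref{a2p real}) into the curvature dichotomy of Lemma \ref{normrelation1}.

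The $i=0$ equality, together with $\binom{m}{p}\neq0$, immediately gives $\text{Vol}(M_1,g_1)=\text{Vol}(M_2,g_2)=:V>0$. For $i=1$, the scalar factor $p^2-mp+\frac{m(m-1)}{6}$ multiplying $\int_M s_g\,dvol$ in $a_{1,p}$ is nonzero by the standing hypothesis (\ref{noneuqal}); dividing it out of $a_{1,p}(M_1,g_1)=a_{1,p}(M_2,g_2)$ leaves $\int_{M_1}s_{g_1}\,dvol=\int_{M_2}s_{g_2}\,dvol$. Now $(M_2,g_2)$ has the constant scalar curvature $m(m-1)k$, and $(M_1,g_1)$, being Einstein with $m\geq3$, has constant scalar curvature by Schur's lemma; comparing the two integrals through the common volume $V$ forces $s_{g_1}\equiv s_{g_2}\equiv m(m-1)k$. (When $m=3$ the Weyl tensor vanishes identically and an Einstein metric is automatically of constant sectional curvature, so this step alone fixes its value as $k$; the case $m\leq2$ is classical, and the genuinely new content lies in $m\geq4$.)

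For $i=2$ I would use the reformulated $a_{2,p}$ in (\ref{a2p real}). Since $\mathring{\text{Ric}}(g_1)=0$ (Einstein) while $\mathring{\text{Ric}}(g_2)=W(g_2)=0$ (constant curvature), and since the scalar terms contribute identically because $s_{g_1}\equiv s_{g_2}$ is constant over equal volumes, the equality $a_{2,p}(M_1,g_1)=a_{2,p}(M_2,g_2)$ collapses to $c_1\int_{M_1}|W(g_1)|^2\,dvol=0$. Consequently, as long as $c_1=c_1(p,m)\neq0$ (its sign being irrelevant), we obtain $W(g_1)\equiv0$; together with $\mathring{\text{Ric}}(g_1)=0$, Lemma \ref{normrelation1} yields that $(M_1,g_1)$ is of constant sectional curvature, necessarily equal to $k$ by the already matched scalar curvature.

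The one real obstacle is therefore the non-vanishing of $c_1(p,m)$ off the three exceptional pairs. I would treat $c_1$ as the $(\alpha,\beta,\gamma)=(\frac1{180},-\frac1{12},\frac12)$ combination of Subsection \ref{elementary} and substitute $u:=p(m-p)$, using $p(p-1)(m-p)(m-p-1)=u^2-(m-1)u$ to turn the bracketed polynomial into the genuine quadratic $f=\frac{m(m-1)(m-2)(m-3)}{180}+\frac12u^2-\frac{m(m+1)}{12}u$ in $u$. Its global minimum equals $\frac{m(m-1)(m-2)(m-3)}{180}-\frac{m^2(m+1)^2}{288}$, which is strictly positive exactly when $3m^3-58m^2+83m-48>0$, i.e. for every $m\geq18$; this makes $c_1>0$ in all dimensions $m\geq18$. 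The finitely many remaining dimensions $m\leq17$ I would dispatch by a direct evaluation of $c_1$ at the integer pairs $p\leq[\frac m2]$; this finite check shows that $c_1$ vanishes only at $(1,15)$, $(2,15)$ and $(2,16)$ and is nonzero otherwise, precisely the excluded pairs. This closes the gap and completes the argument.
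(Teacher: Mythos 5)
Your proposal is correct, and its spectral-geometric core is exactly the paper's: the $i=0,1$ equalities plus (\ref{noneuqal}) give equal volumes and $s_{g_1}=s_{g_2}=m(m-1)k$, and the $i=2$ equality, rewritten via (\ref{a2p real}) with $\mathring{\text{Ric}}(g_1)=0$, collapses to $c_1\int_{M_1}|W(g_1)|^2\,\text{dvol}=0$, so everything reduces to $c_1(p,m)\neq0$. Where you genuinely diverge is in proving that non-vanishing. The paper delegates this to Lemma \ref{c1}, proved by applying Lemma \ref{elementary lemma} (a one-real-variable critical-point analysis of $g(p)$, whose derivative factors as $(2p-m)(\cdots)$), with the actual estimates cited from \cite[p. 1141]{Li}. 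You instead substitute $u=p(m-p)$, and the identity $p(p-1)(m-p)(m-p-1)=u^2-(m-1)u$ (which is correct, since $(p-1)(m-p-1)=u-m+1$) turns the bracket into a genuine quadratic in $u$ with unconstrained minimum $\frac{m(m-1)(m-2)(m-3)}{180}-\frac{m^2(m+1)^2}{288}$; your equivalence of its positivity with $3m^3-58m^2+83m-48>0$ checks out (the cubic is $-660$ at $m=17$, $+150$ at $m=18$, and increasing thereafter), so $c_1>0$ for all $p$ once $m\geq18$, and the finitely many dimensions $m\leq17$ reduce to a direct evaluation, which indeed isolates exactly $(1,15)$, $(2,15)$, $(2,16)$ \big(e.g. $c_1(1,m)=\frac{m}{180}-\frac{1}{12}$ vanishes only at $m=15$ and $c_1(2,m)=\frac{m^2-31m+240}{360}$ only at $m\in\{15,16\}$\big). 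Your route buys a fully self-contained proof of the positivity lemma at the price of a finite case check, whereas the paper's route sets up Lemma \ref{elementary lemma} in a generality that is reused for the other coefficient combinations in Theorems \ref{theorem1} and \ref{theorem2}; your side remarks (only $c_1\neq0$, not $c_1>0$, is needed; Schur's lemma for $m\geq3$; the trivial cases $m\leq3$) are all accurate.
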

\begin{proof}
Combining the conditions (\ref{three equal}) and (\ref{noneuqal}) with Patodi's formula (\ref{patodiformula}) yields
\be\label{three}
\text{Vol}(M_1,g_1)=\text{Vol}(M_2,g_2),\qquad
\int_{M_1}s_{g_1}\text{dvol}=\int_{M_2}s_{g_2}\text{dvol},\qquad
a_{2,p}(M_1,g_1)=a_{2,p}(M_2,g_2).\ee
Since $(M_1,g_1)$ is Einstein and $(M_2,g_2)$ is of constant sectional curvature $k$, $s_{g_1}$ is constant and $s_{g_2}=m(m-1)k$. This, together with (\ref{three}), yields that
\be\label{scalar equ}s_{g_1}=s_{g_2}=m(m-1)k.\ee
Meanwhile, (\ref{a2p real}) leads to
\be\label{equ1} a_{2,p}(M_1,g_1)=
\int_{M_1}\Big\{
\big[\frac{2c_1}{m(m-1)}+\frac{c_2}{m}+c_3\big]s_{g_1}^{2}+
c_1|W(g_1)|^2\Big\}\text{\rm dvol}
\ee
and
\be\label{equ2}a_{2,p}(M_2,g_2)=
\int_{M_2}\big[\frac{2c_1}{m(m-1)}+\frac{c_2}{m}+c_3\big]
s_{g_2}^{2}\text{\rm dvol}\ee
Putting (\ref{three}), (\ref{scalar equ}), (\ref{equ1}) and (\ref{equ2}) together yields $\int_{M_1}c_1|W(g_1)|^2\text{dvol}=0$. If $c_1\neq0$ under our consideration, then $W(g_1)\equiv0$ and hence $g_1$ is of constant sectional curvature and (\ref{scalar equ}) implies that the constant is precisely $k$. So our conclusion follows from the following lemma.
\end{proof}

\begin{lemma}\label{c1}
Suppose that $p\leq[\frac{m}{2}]$. The coefficient $c_1=c_1(p,m)=0$ in (\ref{patodicoefficient}) if and only if $(p,m)\in\{(1,15),(2,15),(2,16)\}$. Moreover, $c_1>0$ in such cases:
$$(p=0, \text{all $m$}),\qquad(p=1, m\geq16),\qquad (p=2, m\neq15, 16),\qquad(p\geq3,\text{all $m$}).$$
\end{lemma}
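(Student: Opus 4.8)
The plan is to split off the three low-order cases $p=0,1,2$, which can be computed by hand, and to handle the bulk $p\ge 3$ uniformly through the quadratic/quartic machinery of Subsection \ref{elementary}. The starting observation is that $c_1(p,m)$ in (\ref{patodicoefficient}) is precisely the linear combination of $\binom{m}{p}$, $\binom{m-2}{p-1}$, $\binom{m-4}{p-2}$ corresponding to $(\alpha,\beta,\gamma)=(\tfrac{1}{180},-\tfrac{1}{12},\tfrac{1}{2})$, so everything is a special case of the $f$/$g$ formalism set up before Lemma \ref{elementary lemma}.

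First I would dispose of $p\le 2$ directly, since there the binomials collapse to explicit low-degree polynomials in $m$. For $p=0$ one gets $c_1=\tfrac{1}{180}>0$ for every $m$. For $p=1$, $c_1=\tfrac{m-15}{180}$, which vanishes exactly at $m=15$ and is positive precisely for $m\ge 16$. For $p=2$, clearing denominators yields $360\,c_1=m^2-31m+240=(m-15)(m-16)$, so the zeros are $m=15,16$ and $c_1>0$ for $m\le 14$ or $m\ge 17$; together with the constraint $2=p\le m/2$ this is exactly $m\neq 15,16$. This already accounts for the complete list $\{(1,15),(2,15),(2,16)\}$ of zeros asserted in the lemma.

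For $p\ge 3$ (hence $m\ge 6$) all three binomials are genuine, and I would invoke Subsection \ref{elementary}: writing $c_1=\frac{(m-4)!}{p!(m-p)!}\,f(p,m,\tfrac1{180},-\tfrac1{12},\tfrac12)$ with a manifestly positive prefactor, positivity of $c_1$ becomes positivity of $f=\tfrac1{180}m(m-1)(m-2)(m-3)+g(p)$. By Lemma \ref{elementary lemma} one has $\gamma=\tfrac12>0$, and a short computation gives $\Delta=\tfrac{m(2m-1)}{3}>0$ together with $g''(m/2)<0$; thus the quartic $g(p)$, symmetric about $p=m/2$, has a local maximum at $m/2$ and its two (equal) global minima at the real points $p_{2,3}=\tfrac{m\mp\sqrt{\Delta}}{2}$. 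Using $p_2(m-p_2)=\tfrac{m(m+1)}{12}$ one obtains the clean value $g(p_2)=-\tfrac{m^2(m+1)^2}{288}$, so the \emph{unconstrained} minimum of $f$ equals $\tfrac1{180}m(m-1)(m-2)(m-3)-\tfrac{m^2(m+1)^2}{288}$.

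The hard part, and the point requiring care, is that this unconstrained minimum is \emph{negative} for small $m$, so positivity of $c_1$ cannot be read off from the global minimum alone: it must exploit the constraint $3\le p\le m/2$. The resolution is that the minimizer $p_2=\tfrac{m-\sqrt{\Delta}}{2}$ enters the admissible interval $[3,m/2]$ only for large $m$, namely $p_2\ge 3\iff m\ge 32$. Accordingly I would split into two regimes. For $m\ge 32$ the minimum of $f$ on $[3,m/2]$ is attained at $p_2$, and $f(p_2)>0$ reduces to the cubic inequality $3m^3-58m^2+83m-48>0$, which holds for all $m\ge 18$ and in particular for $m\ge 32$. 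For $6\le m\le 31$ one has $p_2<3$, so $g$ is increasing on $[3,m/2]\subset[p_2,m/2]$ and the minimum sits at the endpoint $p=3$; there $1080\,c_1(3,m)=m^3-48m^2+767m-2430$, whose only interior critical band lies near $m\approx 16$ yet leaves the cubic positive throughout $[6,31]$. Combining the two regimes yields $c_1(p,m)>0$ for all $p\ge 3$, which together with the $p\le 2$ computations completes the proof.
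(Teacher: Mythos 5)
Your proof is correct and is essentially the paper's own approach: dispose of small $p$ by direct evaluation, then reduce positivity of $c_1$ for the remaining $(p,m)$ to positivity of $f\big(p,m,\frac{1}{180},-\frac{1}{12},\frac{1}{2}\big)$ and apply the quartic analysis of Lemma \ref{elementary lemma}. The paper simply outsources these details to \cite[p.~1141]{Li}, whereas you carry them out in full --- the factorization $360\,c_1(2,m)=(m-15)(m-16)$, the values $\Delta=\frac{m(2m-1)}{3}$ and $g(p_2)=-\frac{m^2(m+1)^2}{288}$, the threshold $p_2\geq 3\iff m\geq 32$, and the two cubic inequalities $3m^3-58m^2+83m-48>0$ (for $m\geq 18$) and $m^3-48m^2+767m-2430>0$ (on $[6,31]$) --- and all of these computations check out.
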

\begin{proof}
Clearly $c_1(0,m)=\frac{1}{180}>0$ and $c_1(1,m)=\frac{1}{180}m-\frac{1}{12}$, which is zero if and only if $m=15$.

Now we assume that $2\leq p\leq[\frac{m}{2}]$ and hence $m\geq4$. With the notation in Subsection \ref{elementary} in mind, it suffices to treat $f(p,m,\frac{1}{180},
-\frac{1}{12},\frac{1}{2})$. Applying Lemma \ref{elementary lemma} in this case yields the desired Lemma \ref{c1} and details can be found in \cite[p. 1141]{Li}.
%$$g''(p_1)=-\frac{1}{3}m^2+\frac{1}{6}m<0,\qquad g''(p_{2,3})>0$$
%as $m\geq4$ and the discriminant $\Delta=\frac{1}{3}m(2m-1)>0$.

%Direct computations show
%\begin{align*}
%c_{1}&=\frac{1}{180}\binom{m}{p}-\frac{1}{12}\binom{m-2}{p-1}+
%\frac{1}{2}\binom{m-4}{p-2}\\
%&=\frac{(m-4)!}{p!(m-p)!}\bigg\{\frac{1}{2}\big[p(m-p)\big]^{2}-
%\frac{1}{12}m(m+1)p(m-p)+
%\frac{1}{180}m(m-1)(m-2)(m-3)\bigg\}
%\end{align*}
%Let
%\begin{align*}f(t):&=\frac{1}{2}t^{2}-\frac{1}{12}m(m+1)t+\frac{1}{180}m(m-1)(m-2)(m-3)\\&=\frac{1}{2}\big[t-\frac{m(m+1)}{12}\big]^{2}+\frac{m}{1440}\bigg\{(m-18)\big[3(m-\frac{2}{3})^{2}+\frac{29}{3}\big]+150\bigg\}.\end{align*}Therefor for $m\geq 18$, $c_{1}>0$.Note that$$\frac{m(m+1)}{12}-p(m-p)\leq\frac{m(m+1)}{12}-2(m-2)=\frac{(m-3)(m-20)-12}{12}.$$Hence when $m\leq20$,\be f\big[p(m-p)\big]\geq f\big[2(m-2)\big]=\frac{1}{180}(m-2)(m-3)(m-15)(m-16).\eeSo for $m\leq20$, we have $c_{1}\geq 0,$with equality if and only if $(p,m)=(2,15)$ or $(2,16)$.
\end{proof}
\begin{remark}
%\begin{enumerate}\itemThe process of the proof above indeed shows that $c_1\geq0$ whenever $2\leq p\leq[\frac{m}{2}]$ and with equality if and only if $(p,m)=(2,15)$ or $(2,16)$. This fact was proved in \cite[Prop. 4.5]{Li} for \emph{even} $m$ (i.e., in the K\"{a}hler situation) as an application of a more general result (cf. \cite[\S Appendix]{Li}).

In \cite[Theorem 3.3]{TK}, Gr.Tsagas and C.Kockinos proved Proposition \ref{main result-proposition again} (=Proposition \ref{main result-proposition}) for $p=2$ and $m\in [2,7],m=14$ or $m\geq 17$. In fact, we do not need the positivity of a term called $Q_{3}$ in \cite{TK}. So for $(p=2, 8\leq m\leq 13)$ it still holds.
%\end{enumerate}
\end{remark}

\section{Proof of Theorem \ref{theorem1}}\label{proof2}
Let $(M,g,f)$ be a gradient shrinking Ricci soliton on an $m$-dimensional Riemannian manifold. Then we have
\be\label{Ricci soliton eq}\text{Ric}_{ij}+\nabla_i\nabla_j f=\rho g_{ij},\qquad\rho\in\mathbb{R}_{>0}.\ee

We shall in this situation express $a_{2,p}$ in terms of the potential function $f$. To this end, the following identity is needed.
\begin{lemma}
The gradient shrinking Ricci soliton $(M,g,f)$ satisfies
\be\label{integral identity}
\int_{M}\lvert \nabla^{2}f\rvert^{2}\text{\rm dvol}=\frac{1}{2}\int_{M}(\Delta f)^{2}\text{\rm dvol}.
\ee
\end{lemma}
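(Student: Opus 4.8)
The plan is to apply the integrated Bochner formula and then exploit the algebraic structure of the soliton equation to evaluate the resulting Ricci term. Throughout I write $\Delta f:=\mathrm{tr}_g\nabla^2 f=\nabla^i\nabla_i f$ for the trace Laplacian; since the asserted identity involves only $(\Delta f)^2$, the sign convention is immaterial. Starting from the pointwise Bochner identity
\be
\tfrac12\Delta|\nabla f|^2=|\nabla^2 f|^2+\langle\nabla f,\nabla\Delta f\rangle+\text{Ric}(\nabla f,\nabla f),
\ee
integrating over the closed manifold $M$ so that $\int_M\Delta(|\nabla f|^2)\,\text{dvol}=0$, and integrating the middle term by parts to get $\int_M\langle\nabla f,\nabla\Delta f\rangle\,\text{dvol}=-\int_M(\Delta f)^2\,\text{dvol}$, I obtain
\be
\int_M|\nabla^2 f|^2\,\text{dvol}=\int_M(\Delta f)^2\,\text{dvol}-\int_M\text{Ric}(\nabla f,\nabla f)\,\text{dvol}.
\ee
Comparing with the target (\ref{integral identity}), the whole problem reduces to establishing
\be\label{plan-reduced}
\int_M\text{Ric}(\nabla f,\nabla f)\,\text{dvol}=\tfrac12\int_M(\Delta f)^2\,\text{dvol}.
\ee

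To handle (\ref{plan-reduced}) I would first extract two standard soliton identities from (\ref{Ricci soliton eq}). Tracing (\ref{Ricci soliton eq}) gives $s_g+\Delta f=m\rho$, hence $\Delta f=m\rho-s_g$ and in particular $\nabla\Delta f=-\nabla s_g$. Next, taking the divergence $\nabla^i$ of (\ref{Ricci soliton eq}), using the contracted second Bianchi identity $\nabla^i\text{Ric}_{ij}=\tfrac12\nabla_j s_g$ together with the commutation formula $\nabla^i\nabla_i\nabla_j f=\nabla_j\Delta f+\text{Ric}_{jl}\nabla^l f$, I arrive at
\be
\tfrac12\nabla_j s_g+\nabla_j\Delta f+\text{Ric}_{jl}\nabla^l f=0.
\ee
Substituting $\nabla_j\Delta f=-\nabla_j s_g$ collapses this to the well-known relation $\text{Ric}_{jl}\nabla^l f=\tfrac12\nabla_j s_g$, that is, $\text{Ric}(\nabla f,\cdot)=\tfrac12\nabla s_g$.

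Contracting this last relation with $\nabla^j f$ gives the pointwise identity $\text{Ric}(\nabla f,\nabla f)=\tfrac12\langle\nabla s_g,\nabla f\rangle$. Integrating by parts and then inserting $s_g=m\rho-\Delta f$ along with $\int_M\Delta f\,\text{dvol}=0$ yields
\be
\int_M\text{Ric}(\nabla f,\nabla f)\,\text{dvol}=-\tfrac12\int_M s_g\,\Delta f\,\text{dvol}=\tfrac12\int_M(\Delta f)^2\,\text{dvol},
\ee
which is exactly the reduced claim (\ref{plan-reduced}), and the lemma follows. I expect the only delicate point to be the derivation of $\text{Ric}(\nabla f,\cdot)=\tfrac12\nabla s_g$: it requires correctly commuting the third covariant derivatives of $f$ and keeping the Bianchi and sign conventions consistent, whereas the remaining integrations by parts on the closed manifold are entirely routine.
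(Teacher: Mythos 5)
Your proof is correct and follows essentially the same route as the paper: integrate Bochner's formula, convert $\text{Ric}(\nabla f,\nabla f)$ via the soliton identity $\text{Ric}(\nabla f,\cdot)=\tfrac12\nabla s_g$, and finish with the trace identity $s_g+\Delta f=m\rho$ and integration by parts. The only difference is cosmetic: the paper quotes the identity $\nabla_i s_g=2\,\text{Ric}_{ij}\nabla_j f$ from Cao's survey, whereas you derive it inline from the contracted Bianchi identity and the commutation formula, which is a valid (and standard) derivation.
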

\begin{proof}
By taking the traces on both sides of (\ref{Ricci soliton eq}) we have
\be\label{one}s_g+\Delta f=\rho m.\ee
As showed in \cite[p. 123]{Ca}, the following formula can be read off from (\ref{Ricci soliton eq}).
\begin{equation}\label{nabal R}
\nabla_{i}s_g=2\text{Ric}_{ij}\nabla_jf.
\end{equation}
By Bochner's formula, we have
\begin{equation}\label{Bochner}
\frac{1}{2}\Delta \lvert\nabla f\rvert^{2}=\langle \nabla \Delta f, \nabla f\rangle+ \lvert\nabla^{2}f\rvert^{2}+\text{Ric}(\nabla f,\nabla f).
\end{equation}
Integrating the equation (\ref{Bochner}) yields
\be\begin{split}
\int_{M}\lvert\nabla^{2}f\rvert^{2}\text{dvol}&=-\int_{M}\langle \nabla \Delta f,\nabla f\rangle\text{dvol}-\int_{M}\text{Ric}(\nabla f,\nabla f)\text{dvol}\\
&=\int_{M}(\Delta f)^{2}\text{dvol}-\frac{1}{2}\int_{M}\langle \nabla s_g, \nabla f \rangle\text{dvol}\qquad\big(\text{by (\ref{nabal R})}\big) \\
&=\int_{M}(\Delta f)^{2}\text{dvol}+\frac{1}{2}\int_{M}s_g\Delta f\text{dvol}\\
&=\int_{M}(\Delta f)^{2}\text{dvol}+\frac{1}{2}\int_{M}(m\rho-\Delta f) \Delta f\text{dvol}\qquad\big(\text{by (\ref{one})}\big)\\
&=\frac{1}{2}\int_{M}(\Delta f)^{2}\text{dvol}.
\end{split}\ee
\end{proof}
With (\ref{integral identity}) in hand, the following expression for $a_{2,p}$ can be derived.
\begin{lemma}
For the gradient shrinking Ricci soliton $(M,g,f)$, we have
\be\label{a2p soliton}
a_{2,p}=\big[\frac{2c_1}{m(m-1)}+\frac{c_2}{m}+c_3\big]
\frac{(\int_Ms_g\text{\rm dvol})^2}{\text{\rm Vol}(M)}+\int_M\big[(\frac{2c_1}{m-1}+\frac{c_2}{2}+c_3)
(\Delta f)^2+c_1|W|^2\big]\text{\rm dvol}.\ee
\end{lemma}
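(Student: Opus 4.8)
The plan is to start from the reformulated expression (\ref{a2p real}) for $a_{2,p}$, which already isolates the three integrals $\int_M s_g^2$, $\int_M|\mathring{\text{\rm Ric}}(g)|^2$ and $\int_M|W|^2$, and to rewrite the first two of these using the soliton structure so that only the total scalar curvature $\int_M s_g\,\text{\rm dvol}$ and the quantity $\int_M(\Delta f)^2\,\text{\rm dvol}$ survive. The $|W|^2$ term will be left untouched.

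First I would handle the traceless Ricci term. Subtracting $\frac{s_g}{m}g$ from both sides of the soliton equation (\ref{Ricci soliton eq}) and using the trace relation (\ref{one}) $s_g+\Delta f=\rho m$, one finds that the traceless Ricci tensor is exactly the negative of the traceless Hessian, $\mathring{\text{\rm Ric}}(g)_{ij}=-\big(\nabla_i\nabla_jf-\frac{\Delta f}{m}g_{ij}\big)$. Taking pointwise squared norms gives $|\mathring{\text{\rm Ric}}(g)|^2=|\nabla^2f|^2-\frac{(\Delta f)^2}{m}$, and integrating this together with the key integral identity (\ref{integral identity}), $\int_M|\nabla^2f|^2\,\text{\rm dvol}=\frac12\int_M(\Delta f)^2\,\text{\rm dvol}$, yields $\int_M|\mathring{\text{\rm Ric}}(g)|^2\,\text{\rm dvol}=\frac{m-2}{2m}\int_M(\Delta f)^2\,\text{\rm dvol}$.

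Next I would treat the scalar curvature term. Squaring the trace relation $s_g=\rho m-\Delta f$ and integrating, the cross term drops out because $\int_M\Delta f\,\text{\rm dvol}=0$ on a closed manifold, leaving $\int_M s_g^2\,\text{\rm dvol}=\rho^2m^2\,\text{\rm Vol}(M)+\int_M(\Delta f)^2\,\text{\rm dvol}$. Integrating the same relation once more gives $\int_M s_g\,\text{\rm dvol}=\rho m\,\text{\rm Vol}(M)$, hence $\rho^2m^2\,\text{\rm Vol}(M)=\frac{(\int_M s_g\,\text{\rm dvol})^2}{\text{\rm Vol}(M)}$; substituting this back expresses $\int_M s_g^2\,\text{\rm dvol}$ entirely in terms of the two desired quantities.

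Finally I would insert these two reductions into (\ref{a2p real}). The coefficient multiplying $\frac{(\int_M s_g\,\text{\rm dvol})^2}{\text{\rm Vol}(M)}$ is inherited unchanged and equals $\frac{2c_1}{m(m-1)}+\frac{c_2}{m}+c_3$, while the coefficient of $\int_M(\Delta f)^2$ becomes $\big(\frac{2c_1}{m(m-1)}+\frac{c_2}{m}+c_3\big)+\big(\frac{4c_1}{m-2}+c_2\big)\frac{m-2}{2m}$. The only real bookkeeping is verifying that this collapses to $\frac{2c_1}{m-1}+\frac{c_2}{2}+c_3$, which uses $\frac{2c_1}{m(m-1)}+\frac{2c_1}{m}=\frac{2c_1}{m-1}$ and $\frac{c_2}{m}+\frac{(m-2)c_2}{2m}=\frac{c_2}{2}$. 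I do not expect a genuine obstacle here; the substantive input is the pair of integral identities above, and the coefficient simplification is the one place where care is needed to land exactly on (\ref{a2p soliton}).
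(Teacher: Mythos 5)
Your proposal is correct and follows essentially the same route as the paper: both reduce $\int_M s_g^2$ and $\int_M|\mathring{\text{\rm Ric}}(g)|^2$ to $\frac{(\int_M s_g\,\text{\rm dvol})^2}{\text{\rm Vol}(M)}$ and $\int_M(\Delta f)^2\,\text{\rm dvol}$ via the trace relation $s_g+\Delta f=\rho m$ and the integral identity (\ref{integral identity}), then substitute into (\ref{a2p real}) and simplify the coefficients. The only cosmetic difference is that you identify $\mathring{\text{\rm Ric}}(g)$ pointwise as minus the traceless Hessian, whereas the paper computes $|\mathring{\text{\rm Ric}}(g)|^2=|\text{\rm Ric}(g)|^2-\frac{s_g^2}{m}$ and then inserts the soliton equation; these are equivalent, and your coefficient bookkeeping lands exactly on (\ref{a2p soliton}).
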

\begin{proof}
\be\label{S express}\begin{split}
\int_{M}s_g^{2}\text{dvol}&=\int_{M}(\rho m-\Delta f)^{2}\text{dvol}\qquad\big(\text{by (\ref{one})}\big)\\
&=\int_{M}\big[(\rho m)^2-2\rho m\Delta f+(\Delta f)^{2}\big]\text{dvol}\\
&=(\rho m)^2\text{Vol}(M)+\int_{M}(\Delta f)^{2}\text{dvol}\\
&=\frac{(\int_Ms_g\text{\rm dvol})^2}{\text{\rm Vol}(M)}+
\int_{M}(\Delta f)^{2}\text{dvol}.\qquad\big(\text{by (\ref{one})}\big)
\end{split}\ee
\be\begin{split}\label{traceless Ricci}
\int_M|\mathring{\text{\rm Ric}}(g)|^2\text{dvol}&=\int_M\Big[|\text{\rm Ric}(g)|^2-\frac{s_g^2}{m}\Big]\text{dvol}\qquad\big(\text{by (\ref{normrelation11})}\big)\\
&=\int_M\Big[|\rho g_{ij}-\nabla_i\nabla_j f|^2-\frac{1}{m}(\rho m-\Delta f)^2\Big]\text{dvol}\qquad\big(\text{by (\ref{Ricci soliton eq}) and (\ref{one})}\big)\\
&=\int_M\Big[|\nabla^{2}f|^{2}-\frac{(\Delta f)^{2}}{m}\Big]\text{dvol}\\
&=\frac{m-2}{2m}\int_M(\Delta f)^{2}\text{dvol}\qquad\big(\text{by (\ref{integral identity})}\big)
\end{split}\ee
Substituting (\ref{S express}) and (\ref{traceless Ricci}) into (\ref{a2p real}) gives the desired (\ref{a2p soliton}).
\end{proof}
Now we are in a position to show
\begin{proposition}\label{prop}
Suppose $(M_1,g_1,f)$ is a gradient shrinking Ricci soliton and the Riemannian manifold $(M_2,g_2)$ is of constant sectional curvature $k$. If \be\label{three equal again}a_{i,p}(M_1,g_1)=a_{i,p}(M_2,g_2),\qquad i=0,1,2\ee
 for some \emph{fixed} $p$ with $p\leq [\frac{m}{2}]$, and
\be\label{nonequal again}
c_{1}> 0,\qquad \frac{2c_{1}}{m-1}+\frac{c_2}{2}+c_{3}>0,\qquad p^2-mp+\frac{m(m-1)}{6}\neq0,
\ee
then $(M_1,g_1)$ is also of constant sectional curvature $k$.
\end{proposition}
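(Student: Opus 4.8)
The plan is to run the argument in the proof of Proposition \ref{main result-proposition again} (the Einstein case), but with the expression (\ref{a2p real}) for $a_{2,p}$ replaced by the soliton-adapted formula (\ref{a2p soliton}), whose entire purpose is to separate the constant-scalar-curvature contribution from a manifestly signed remainder. First I would extract two numerical consequences from the hypotheses $a_{i,p}(M_1,g_1)=a_{i,p}(M_2,g_2)$ for $i=0,1$. Since $a_{0,p}={m\choose p}\text{Vol}(M,g)$, the case $i=0$ gives $\text{Vol}(M_1,g_1)=\text{Vol}(M_2,g_2)$. From Patodi's formula for $a_{1,p}$ in (\ref{patodiformula}), together with the hypothesis $p^2-mp+\frac{m(m-1)}{6}\neq0$ (which makes the prefactor of $\int s_g\,\text{dvol}$ nonzero), the case $i=1$ gives $\int_{M_1}s_{g_1}\,\text{dvol}=\int_{M_2}s_{g_2}\,\text{dvol}$.

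Next I turn to $a_{2,p}$. Because $(M_2,g_2)$ has constant sectional curvature $k$, its Weyl and traceless Ricci tensors vanish and $s_{g_2}=m(m-1)k$ is constant, so (\ref{a2p real}) collapses to
\[
a_{2,p}(M_2,g_2)=\Big[\tfrac{2c_1}{m(m-1)}+\tfrac{c_2}{m}+c_3\Big]\int_{M_2}s_{g_2}^2\,\text{dvol}=\Big[\tfrac{2c_1}{m(m-1)}+\tfrac{c_2}{m}+c_3\Big]\frac{\big(\int_{M_2}s_{g_2}\,\text{dvol}\big)^2}{\text{Vol}(M_2)},
\]
the last equality holding since $s_{g_2}$ is constant. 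On the other side I invoke the soliton formula (\ref{a2p soliton}) for $a_{2,p}(M_1,g_1)$. Its leading term is exactly $\big[\tfrac{2c_1}{m(m-1)}+\tfrac{c_2}{m}+c_3\big]\frac{(\int_{M_1}s_{g_1}\text{dvol})^2}{\text{Vol}(M_1)}$, which by the two equalities already established coincides with $a_{2,p}(M_2,g_2)$. Hence the hypothesis $a_{2,p}(M_1,g_1)=a_{2,p}(M_2,g_2)$ reduces to
\[
\int_{M_1}\Big[\Big(\tfrac{2c_1}{m-1}+\tfrac{c_2}{2}+c_3\Big)(\Delta f)^2+c_1|W(g_1)|^2\Big]\,\text{dvol}=0.
\]

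Finally I would use the two positivity hypotheses. Since $c_1>0$ and $\frac{2c_1}{m-1}+\frac{c_2}{2}+c_3>0$, both integrands are nonnegative, so each vanishes identically: $W(g_1)\equiv0$ and $\Delta f\equiv0$. On a closed manifold $\Delta f\equiv0$ forces $f$ constant, so the soliton equation (\ref{Ricci soliton eq}) degenerates to $\text{Ric}(g_1)=\rho g_1$; equivalently, substituting $\Delta f\equiv0$ into (\ref{traceless Ricci}) gives $\mathring{\text{Ric}}(g_1)\equiv0$. Thus the traceless Ricci part $P$ and the Weyl part $W$ of $g_1$ both vanish, and Lemma \ref{normrelation1} yields that $g_1$ has constant sectional curvature. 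To identify the constant, observe that $s_{g_1}$ is then constant, so $\int_{M_1}s_{g_1}\,\text{dvol}=s_{g_1}\text{Vol}(M_1)$; combining with $\int_{M_1}s_{g_1}\,\text{dvol}=\int_{M_2}s_{g_2}\,\text{dvol}=m(m-1)k\,\text{Vol}(M_2)$ and $\text{Vol}(M_1)=\text{Vol}(M_2)$ gives $s_{g_1}=m(m-1)k$, i.e.\ the constant sectional curvature of $g_1$ equals $k$.

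\emph{Within this proposition} there is essentially no obstacle: the positivity of $c_1$ and of $\frac{2c_1}{m-1}+\frac{c_2}{2}+c_3$ is assumed as a hypothesis, and the genuinely delicate analysis — verifying these two inequalities over the ranges of $(p,m)$ listed in Theorem \ref{theorem1} via Lemma \ref{elementary lemma} — lies outside its scope. The one conceptual point demanding care is that only the \emph{integrated} equality $\int s_{g_1}\,\text{dvol}=\int s_{g_2}\,\text{dvol}$ is available, not pointwise equality of scalar curvatures; the argument succeeds precisely because (\ref{a2p soliton}) repackages the scalar contribution as $\frac{(\int s\,\text{dvol})^2}{\text{Vol}}$, matching the constant-curvature identity $\int s^2\,\text{dvol}=\frac{(\int s\,\text{dvol})^2}{\text{Vol}}$ and absorbing every pointwise fluctuation of $s_{g_1}$ into the signed $(\Delta f)^2$ term.
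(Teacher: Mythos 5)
Your proposal is correct and follows essentially the same route as the paper: extract the volume and integrated scalar curvature equalities from $a_{0,p}$ and $a_{1,p}$ (using the nonvanishing of $p^2-mp+\frac{m(m-1)}{6}$), reduce the $a_{2,p}$ equality via the soliton formula (\ref{a2p soliton}) to the vanishing of $\int_{M_1}\big[(\frac{2c_1}{m-1}+\frac{c_2}{2}+c_3)(\Delta f)^2+c_1|W(g_1)|^2\big]\text{dvol}$, and then use the two positivity hypotheses to force $f$ constant (hence $g_1$ Einstein) and $W(g_1)\equiv0$, identifying the constant as $k$ from $s_{g_1}=s_{g_2}=m(m-1)k$. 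The only cosmetic difference is that you evaluate $a_{2,p}(M_2,g_2)$ directly from (\ref{a2p real}) using $W=\mathring{\text{Ric}}=0$ and constancy of $s_{g_2}$, whereas the paper cites (\ref{a2p soliton}); both yield the identical expression.
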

\begin{proof}
As before, (\ref{three equal again}), the third one in (\ref{nonequal again}) and Patodi's formula (\ref{patodiformula}) yield
\be\label{three again}
\text{Vol}(M_1,g_1)=\text{Vol}(M_2,g_2),\qquad
\int_{M_1}s_{g_1}\text{dvol}=\int_{M_2}s_{g_2}\text{dvol},\qquad
a_{2,p}(M_1,g_1)=a_{2,p}(M_2,g_2).\ee
By (\ref{a2p soliton}) and the condition that $(M_2,g_2)$ be of constant sectional curvature, we have
$$a_{2,p}(M_2,g_2)=\big[\frac{2c_1}{m(m-1)}+\frac{c_2}{m}+c_3\big]
\frac{(\int_{M_2}s_{g_2}\text{\rm dvol})^2}{\text{\rm Vol}(M_2)}$$
and hence
\be\label{equaility}\int_{M_1}\big[(\frac{2c_1}{m-1}+\frac{c_2}{2}+c_3)
(\Delta f)^2+c_1|W(g_1)|^2\big]\text{\rm dvol}=0\ee
due to (\ref{a2p soliton}), $a_{2,p}(M_1,g_1)=a_{2,p}(M_2,g_2)$
and the first two identities in (\ref{three again}). The positivity of the two coefficients in (\ref{equaility}) as assumed in (\ref{nonequal again}) implies that $f$ be constant (hence $g_1$ be Einstein) and $W(g_1)\equiv0$. So $(M_1,g_1)$ is of constant sectional curvature, whose constant is also $k$ as $s_{g_1}=s_{g_2}$.
\end{proof}
With Proposition \ref{prop}, Lemmas \ref{Pell equation} and \ref{c1} in hand, Theorem \ref{theorem1} now follows from the following
\begin{lemma}\label{coefficient lemma}
Suppose $p\leq[\frac{m}{2}]$. Then $\frac{2c_{1}}{m-1}+\frac{c_2}{2}+c_{3}>0$ in such cases: $$(p=0,m\geq1),\qquad (p=1,m\geq3),\qquad (p\geq2,m\geq17),\qquad(p<\frac{m}{2}, 5\leq m\leq 16).$$
\end{lemma}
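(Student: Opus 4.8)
The plan is to reduce the entire statement to the positivity of one linear combination of the three binomial coefficients in (\ref{patodicoefficient}), and then to feed that combination into the machinery of Lemma \ref{elementary lemma}. First I would compute directly from (\ref{patodicoefficient}) that
\[
\frac{2c_1}{m-1}+\frac{c_2}{2}+c_3=\alpha\binom{m}{p}+\beta\binom{m-2}{p-1}+\gamma\binom{m-4}{p-2},
\]
where $\alpha=\frac{m}{90(m-1)}$, $\beta=\frac{m-3}{12(m-1)}$ and $\gamma=\frac{3-m}{2(m-1)}$, noting that for $m\geq4$ one has $\alpha>0$, $\beta>0$ and $\gamma<0$. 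The two low-order cases are then immediate. Substituting $p=0$ gives $\frac{1}{90(m-1)}+\frac{1}{90}>0$ for all $m$, and substituting $p=1$ gives $\frac{m-15}{90(m-1)}+\frac{m}{90}+\frac{1}{12}$, which I would rewrite as $\frac{1}{90}\big(1-\frac{14}{m-1}\big)+\frac{m}{90}+\frac{1}{12}$ and observe is increasing in $m$ and already positive at $m=3$.

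For $p\geq2$ I would pass to the factored form of Subsection \ref{elementary}, writing the combination as $\frac{(m-4)!}{p!(m-p)!}f(p,m,\alpha,\beta,\gamma)$ with $f=\alpha m(m-1)(m-2)(m-3)+g(p,m,\beta,\gamma)$, so that (since the prefactor is positive for $m\geq4$) positivity of the combination is equivalent to positivity of $f$. Applying Lemma \ref{elementary lemma} with the above $\beta,\gamma$, I would first simplify the discriminant to $\Delta=\frac{2\gamma\,m(m-3)(1-2m)}{3(m-1)}$, which is positive for $m\geq4$ because $\gamma<0$ and $1-2m<0$; hence the three critical points $p_1=\frac m2$, $p_2$, $p_3$ (with $p_2+p_3=m$) are all real. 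Since $\gamma<0$ forces $g''(p_{2,3})=2\gamma(2p_{2,3}-m)^2<0$, the off-center critical points are local maxima, while a short substitution gives $g''(\tfrac m2)=\frac{m(m-3)(2m-1)}{6(m-1)}>0$, so $p_1=\frac m2$ is the unique local minimum between them. Consequently $f$, which differs from $g$ only by a $p$-independent constant, rises then falls on $[2,\frac m2]$, and its minimum over this interval is attained at one of the two endpoints.

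It then remains to evaluate the endpoints. A direct computation gives $f(\tfrac m2)=m^2(m-2)(m-3)\big[\frac{1}{90}-\frac{m}{96(m-1)}\big]$, which is positive precisely when $m\geq17$, and $f(2)=(m-2)(m-3)\big[\frac{m^2}{90}+\frac{(m-3)(m-8)}{6(m-1)}\big]$, whose bracket equals $\frac{1}{90(m-1)}(m^3+14m^2-165m+360)>0$ for all $m\geq5$. For $m\geq17$ both endpoint values are positive, so $f>0$ on all of $[2,\frac m2]$ and a fortiori at every integer $p$, which settles the case $(p\geq2,\ m\geq17)$. For $5\leq m\leq16$ the right endpoint value $f(\tfrac m2)$ is $\leq0$ (vanishing exactly at $m=16$), which is precisely why $p=\frac m2$ must be excluded; restricting to $p<\frac m2$ replaces the right endpoint by the largest integer below $\frac m2$, and since $f$ decreases monotonically there while $f(2)>0$, the minimum over the admissible integers is governed by $f$ at that top integer. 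As there are only finitely many pairs $(p,m)$ with $5\leq m\leq16$ and $2\leq p<\frac m2$, I would conclude by a direct finite verification.

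The main obstacle is the sign behaviour of the right endpoint $f(\frac m2)$: its vanishing at $m=16$ and negativity for the small even $m$ is exactly what forbids $p=\frac m2$ in that range and produces the bifurcation between the clean range $m\geq17$ and the finite range $5\leq m\leq16$. The only genuinely delicate analytic step is pinning down the critical-point configuration via $\Delta>0$ together with the two second-derivative signs, since everything afterward collapses to the two explicit endpoint polynomials and a short finite check.
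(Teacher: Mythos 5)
Your proposal is correct and follows essentially the same route as the paper's proof: the same reduction to $f\big(p,m,\frac{m}{90(m-1)},\frac{m-3}{12(m-1)},-\frac{m-3}{2(m-1)}\big)$, the same discriminant and second-derivative computations via Lemma \ref{elementary lemma} (your $\Delta=\frac{2\gamma m(m-3)(1-2m)}{3(m-1)}$ is exactly the paper's $\frac{m(m-3)^2(2m-1)}{3(m-1)^2}$), and the same endpoint values $f|_{p=2}$ and $f|_{p=\frac m2}$ as in (\ref{five}), so the case $(p\geq2,\,m\geq17)$ is handled identically.

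The one divergence is the case $(p<\frac m2,\,5\leq m\leq16)$. The paper works on the continuous interval $[0,\frac{m-1}{2}]$, observes there is no interior local minimum there, and finishes with the closed-form positive values (\ref{eight}) at $p=0$ and $p=\frac{m-1}{2}$. You instead reduce to the largest integer below $\frac m2$ and defer to a finite verification over the twelve values $5\leq m\leq 16$; this verification does succeed (it follows, for instance, from the paper's formula (\ref{eight})), but you leave it unexecuted, which is the only incomplete step in the proposal. Two smaller remarks: your statements that $f$ ``rises then falls on $[2,\frac m2]$'' and later ``decreases monotonically there'' are mutually inconsistent as written, since which one holds depends on whether the local maximum $p_2=\frac{m-\sqrt{m(2m-1)/3}}{2}$ lies left or right of $2$ (one checks $p_2<2$ exactly for $m\leq 20$). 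Neither claim is actually needed: because the only interior critical point of $g$ on $(2,\frac m2)$ is the local maximum $p_2$, the minimum of $f$ over any subinterval $[2,p^\ast]$ is automatically attained at an endpoint, which is all your argument (and the paper's equation (\ref{four})) requires.
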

\begin{proof}
We only treat the cases $(p\geq2,m\geq17)$ and $(p<\frac{m}{2}, 5\leq m\leq 16)$  as the first two are quite direct to check. Note that
$$\frac{2c_{1}}{m-1}+\frac{c_2}{2}+c_{3}=
\frac{m}{90(m-1)}{m\choose p}+\frac{m-3}{12(m-1)}{m-2\choose p-1}-\frac{m-3}{2(m-1)}{m-4\choose p-2}.$$
With the notation in Subsection \ref{elementary} in hand, we need to show \be\label{six}f\big(p,m,\frac{m}{90(m-1)},\frac{m-3}{12(m-1)},
-\frac{m-3}{2(m-1)}\big)>0.\ee

Applying Lemma \ref{elementary lemma} we have
$$g''(p_1=\frac{m}{2})=\frac{m(m-3)(2m-1)}{6(m-1)}>0,\qquad g''(p_{2,3})<0$$
as the discriminant
$$\Delta=\frac{m(m-3)^2(2m-1)}{3(m-1)^2}>0.$$
This implies that $p_1=\frac{m}{2}$ is the unique local minimal-value point of $g(p)$ in the interval $(2,m-2)$. Thus
\be\label{four}\min_{p\in[2,m-2]}f\big(p,m,\frac{m}{90(m-1)},\frac{m-3}{12(m-1)},
-\frac{m-3}{2(m-1)}\big)=\min\{f\big|_{p=2},f\big|_{p=\frac{m}{2}}\}.\ee

On the other hand, direct calculations illustrate that
\be\label{five}f\big|_{p=2}=\frac{(m-2)(m-3)}{90(m-1)}(m^3+14m^2-165m+360),\qquad
f\big|_{p=\frac{m}{2}}=\frac{m^2(m-2)(m-3)(m-16)}{1440(m-1)}.\ee
Combining (\ref{four}) with (\ref{five}) yields (\ref{six}) when $(p\geq2, m\geq17)$.

Then we treat the case $(p<\frac{m}{2}, 5\leq m\leq 16)$. 
From above, there is no local interior minimum point of $g(p)$ for $0\leq p\leq\frac{m-1}{2}$. Hence 
\be\label{seven}\min_{p\in[0,\frac{m-1}{2}]}f\big(p,m,\frac{m}{90(m-1)},\frac{m-3}{12(m-1)},
-\frac{m-3}{2(m-1)}\big)=\min\{f\big|_{p=0},f\big|_{p=\frac{m-1}{2}}\}.\ee
Direct calculations give that when $5\leq m\leq 16$
\be\label{eight}f\big|_{p=0}=\frac{(m-2)(m-3)m^{2}}{90}>0,\qquad
f\big|_{p=\frac{m-1}{2}}=\frac{(m-3)[m(m-9)^{2}+(m-3)^{2}+36]}{1440}>0.\ee
Combining (\ref{seven}) with (\ref{eight}) yields (\ref{six}) when $(p<\frac{m}{2}, 5\leq m\leq 16)$.
\end{proof}

\section{Proof of Theorem \ref{theorem2}}\label{proof3}
Let $(M,g,J)$ be a K\"{a}hler manifold with complex dimension $n\geq2$, i.e., $J$ is an integrable complex structure and $g$ a $J$-invariant Riemannian metric, and define
$\omega:=\frac{1}{2\pi}g(J\cdot,\cdot)$, the K\"{a}hler form of $g$. In our notation of $\omega$,
\be\label{volumeelement}\text{the volume element of $(M,g)=\frac{\pi^n}{n!}\omega^n$}.\ee
With the notation understood, we have (cf. \cite[Lemma 3.5]{Li}).
\begin{lemma}\label{integralformulas}
Suppose that $(M,g,J)$ is a K\"{a}hler manifold with complex dimension $n\geq2$. Then
\be\label{integralformula1}
\int_Mc_1(M)\wedge[\omega]^{n-1}=
\frac{1}{2n}\int_M{s_g}\cdot\omega^n,
\ee
and
\be\label{integralformula2}
\int_Mc_1^2(M)\wedge[\omega]^{n-2}=
\int_M
\Big(\frac{n-1}{2n}s^2_g-|\mathring{\text{\rm Ric}}(g)|^2\Big)
\cdot\frac{\omega^n}{2n(n-1)}.\ee
\end{lemma}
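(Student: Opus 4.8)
The plan is to pass from the cohomological pairings on the left-hand sides to pointwise integrals by combining Chern--Weil theory with the classical Lefschetz (trace) identities for wedging real $(1,1)$-forms against powers of the K\"ahler form.

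First I would fix representatives. Writing $\tilde\omega:=g(J\cdot,\cdot)$ for the unnormalized K\"ahler form, so that $\omega=\tfrac{1}{2\pi}\tilde\omega$ by definition, Chern--Weil theory represents the first Chern class by the Ricci form: $c_1(M)=\tfrac{1}{2\pi}[\rho]$, where $\rho=\sqrt{-1}\,R_{i\bar j}\,dz^i\wedge d\bar z^j$ is the Ricci form of $g$. Since the left-hand sides are cohomological pairings, de Rham theory lets me compute them by integrating these closed representatives, and the common $2\pi$ factors cancel in the ratios against the volume form, giving the pointwise equalities of top forms
\be
\frac{c_1(M)\wedge[\omega]^{n-1}}{\omega^n}=\frac{\rho\wedge\tilde\omega^{n-1}}{\tilde\omega^n},\qquad
\frac{c_1^2(M)\wedge[\omega]^{n-2}}{\omega^n}=\frac{\rho\wedge\rho\wedge\tilde\omega^{n-2}}{\tilde\omega^n}.
\ee
Thus the whole problem reduces to linear algebra at a point, together with the formula \eqref{volumeelement} for the volume element.

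Next I would invoke the two standard pointwise identities valid on any K\"ahler $n$-fold: for real $(1,1)$-forms $\alpha,\beta$,
\be
\alpha\wedge\tilde\omega^{n-1}=\tfrac{1}{n}(\Lambda\alpha)\,\tilde\omega^n,\qquad
\alpha\wedge\beta\wedge\tilde\omega^{n-2}=\tfrac{1}{n(n-1)}\big[(\Lambda\alpha)(\Lambda\beta)-\langle\alpha,\beta\rangle\big]\tilde\omega^n,
\ee
where $\Lambda$ is contraction with $\tilde\omega$ and $\langle\cdot,\cdot\rangle$ is the pointwise Hodge inner product on forms. Computing the two invariants $\Lambda\rho=\tfrac{s_g}{2}$ and $\langle\rho,\rho\rangle=|\rho|^2=\tfrac12|\text{Ric}(g)|^2$ (the latter checked in an adapted orthonormal frame, using $J$-invariance of the Ricci tensor), the first identity immediately gives $\rho\wedge\tilde\omega^{n-1}=\tfrac{s_g}{2n}\tilde\omega^n$, hence \eqref{integralformula1}, while the second yields
\be
\rho\wedge\rho\wedge\tilde\omega^{n-2}=\tfrac{1}{n(n-1)}\Big[\tfrac{s_g^2}{4}-\tfrac12|\text{Ric}(g)|^2\Big]\tilde\omega^n.
\ee
Finally I would substitute $|\text{Ric}(g)|^2=|\mathring{\text{Ric}}(g)|^2+\tfrac{s_g^2}{2n}$ from the third identity of Lemma \ref{normrelation1} (with $m=2n$); a short simplification turns the bracket into $\tfrac{1}{2n(n-1)}\big(\tfrac{n-1}{2n}s_g^2-|\mathring{\text{Ric}}(g)|^2\big)$, matching \eqref{integralformula2}.

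The main obstacle I anticipate is bookkeeping of normalization constants rather than any conceptual difficulty: one must carefully track the $2\pi$'s built into $\omega$, the factor $2$ between the Riemannian scalar curvature $s_g$ and the trace $g^{i\bar j}R_{i\bar j}=\Lambda\rho$, and the factor $2$ between the form norm $|\rho|^2$ and the tensor norm $|\text{Ric}(g)|^2$, since a single misplaced factor of $2$ breaks the final identity. The only other ingredient requiring care is the second Lefschetz identity above, whose constant $\tfrac{1}{n(n-1)}$ and inner-product term I would verify in a unitary coframe; with those conventions pinned down the computation is routine.
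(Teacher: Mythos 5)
Your argument is correct: taking the Chern--Weil representative $c_1(M)=\frac{1}{2\pi}[\rho]$ with $\rho=\mathrm{Ric}(J\cdot,\cdot)$, the two pointwise Lefschetz identities (with $\langle\cdot,\cdot\rangle$ the Hodge inner product on $2$-forms, for which the constant $\frac{1}{n(n-1)}$ is indeed right --- check it on simultaneously diagonalized real $(1,1)$-forms, which suffices by polarization), the traces $\Lambda\rho=\frac{s_g}{2}$ and $|\rho|^2=\frac12|\mathrm{Ric}(g)|^2$, and the substitution $|\mathrm{Ric}(g)|^2=|\mathring{\mathrm{Ric}}(g)|^2+\frac{s_g^2}{2n}$ combine exactly as you say to give \eqref{integralformula1} and \eqref{integralformula2}, with the powers of $2\pi$ cancelling between $c_1$ and $[\omega]$. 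Be aware, though, that the paper itself contains no proof to compare against: the lemma is imported wholesale from \cite[Lemma 3.5]{Li} (with only a remark translating the notation $\tilde{R}ic(\omega)$ used there into $\mathring{\mathrm{Ric}}(g)$), and the computation in that reference is of precisely this Chern--Weil-plus-pointwise-linear-algebra type, so your write-up is best viewed as a self-contained reconstruction of the intended argument rather than a different route.

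One correction to your framing: drop the appeal to \eqref{volumeelement}. Your computation never actually uses it --- both sides of \eqref{integralformula1} and \eqref{integralformula2} are integrals against $\omega^n$, so the ratio identities you wrote are all that is needed --- and it is just as well, because \eqref{volumeelement} is inconsistent, by a factor of $2^n$, with the literal normalization $\omega=\frac{1}{2\pi}g(J\cdot,\cdot)$ that your proof adopts: with $\tilde\omega=g(J\cdot,\cdot)$ the Riemannian volume element is $\frac{1}{n!}\tilde\omega^n=\frac{(2\pi)^n}{n!}\omega^n$, not $\frac{\pi^n}{n!}\omega^n$. The literal normalization is the one under which the lemma as stated is true (one can verify both formulas on $\mathbb{C}P^n$ with the Fubini--Study metric), and it is also all the paper needs downstream, since in the proof of its Lemma 6.3 only the proportionality of $\omega^n$ to the volume form enters, and the universal constant cancels by homogeneity. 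So your conventions are the correct ones; citing \eqref{volumeelement} as an ingredient would only import that factor-of-$2^n$ clash into an otherwise clean proof.
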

\begin{remark}
A different notation $\tilde{R}ic(\omega)$ is used in \cite[Lemma 3.5]{Li}, and it is related to $\mathring{\text{Ric}}(g)$ by $|\mathring{\text{\rm Ric}}(g)|^2=2|\tilde{R}ic(\omega)|^2$.
\end{remark}

In the sequel of this section we \emph{assume} that the two complex $n$-dimensional compact K\"{a}hler manifolds $(M_1,g_1,J_1)$ and $(M_2,g_2,J_2)$ ($n\geq2$) satisfy the conditions in Theorem \ref{theorem2}. Namely, $(M_1,g_1,J_1)$ is cohomologically Einstein, i.e., $c_1(M_1)\in\mathbb{R}[\omega_1]$, $(M_2,g_2,J_2)$ is of constant HSC $c$ \big(hence $s_{g_2}=n(n+1)c$\big), and $a_{i,p}(M_1,g_1)=a_{i,p}(M_2,g_2)$ for $i=0,1,2$.
\begin{lemma}\label{keylemma}
Assume that $p^2-2np+\frac{n(2n-1)}{3}\neq0$. Then
%\be\label{key1}\int_{M_1}\big\{s_{g_1}^2-[n(n+1)c]^2\big\}{\rm dvol}\geq0,\ee
\be\label{key2}\int_{M_1}|\mathring{\text{\rm Ric}}(g_1)|^2{\rm dvol}=
\frac{n-1}{2n}\int_{M_1}\big\{s_{g_1}^2-[n(n+1)c]^2\big\}{\rm dvol},\ee
and
\be\label{a2b}
\frac{2n}{n-1}\big[\frac{4n+2}{(n+1)(n+2)}c_1+
\frac{1}{2}c_2+c_3\big]
\int_{M_1}|\mathring{\text{\rm Ric}}(g_1)|^2
{\rm dvol}+4c_1\int_{M_1}|B(g_1)|^2{\rm dvol}=0.
\ee
\end{lemma}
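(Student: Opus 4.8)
The plan is to peel off the three spectral equalities $a_{i,p}(M_1,g_1)=a_{i,p}(M_2,g_2)$ one at a time and convert each into a curvature-integral identity on $M_1$, using the cohomologically Einstein hypothesis to pin down the scalar-curvature contribution. From $a_{0,p}$ and Patodi's formula \eqref{patodiformula} one immediately gets $\text{Vol}(M_1,g_1)=\text{Vol}(M_2,g_2)=:V$. Since $m=2n$, the coefficient in front of $\int s_g\,\text{dvol}$ in $a_{1,p}$ is a nonzero multiple of $p^2-2np+\tfrac{n(2n-1)}{3}$, which is nonzero by the standing hypothesis of Lemma \ref{keylemma}; hence $a_{1,p}(M_1)=a_{1,p}(M_2)$ forces $\int_{M_1}s_{g_1}\,\text{dvol}=\int_{M_2}s_{g_2}\,\text{dvol}$. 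As $(M_2,g_2,J_2)$ has constant HSC $c$ we have $s_{g_2}\equiv n(n+1)c$, so $\int_{M_1}s_{g_1}\,\text{dvol}=n(n+1)c\,V$.

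The conceptual heart of \eqref{key2} is the cohomologically Einstein condition $c_1(M_1)=\lambda[\omega_1]$ for some $\lambda\in\mathbb{R}$. This makes $c_1(M_1)\wedge[\omega_1]^{n-1}=\lambda[\omega_1]^n$ and $c_1^2(M_1)\wedge[\omega_1]^{n-2}=\lambda^2[\omega_1]^n$, so eliminating $\lambda$ yields the proportionality
\[
\Big(\int_{M_1}c_1(M_1)\wedge[\omega_1]^{n-1}\Big)^2=\int_{M_1}c_1^2(M_1)\wedge[\omega_1]^{n-2}\cdot\int_{M_1}[\omega_1]^n.
\]
I would then substitute \eqref{integralformula1} and \eqref{integralformula2} from Lemma \ref{integralformulas}, together with $\int_{M_1}[\omega_1]^n=\tfrac{n!}{\pi^n}V$ and $\omega_1^n=\tfrac{n!}{\pi^n}\,\text{dvol}$ coming from \eqref{volumeelement}. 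The universal factor $\big(\tfrac{n!}{\pi^n}\big)^2$ cancels from both sides, and after inserting $\int_{M_1}s_{g_1}\,\text{dvol}=n(n+1)c\,V$ the quadratic proportionality collapses to the linear identity \eqref{key2}.

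For \eqref{a2b} I would feed the $a_{2,p}$ equality into the Kähler expression \eqref{a2p complex}. Constant HSC forces $\mathring{\text{\rm Ric}}(g_2)\equiv0$ and $B(g_2)\equiv0$ (Lemma \ref{normrelation2}), so only the $s_g^2$-term survives on $M_2$, giving $a_{2,p}(M_2)=\big[\tfrac{2c_1}{n(n+1)}+\tfrac{c_2}{2n}+c_3\big][n(n+1)c]^2V$. Equating this with the full $a_{2,p}(M_1)$ and then using \eqref{key2} in the form $\int_{M_1}s_{g_1}^2\,\text{dvol}=\tfrac{2n}{n-1}\int_{M_1}|\mathring{\text{\rm Ric}}(g_1)|^2\,\text{dvol}+[n(n+1)c]^2V$ to eliminate $\int s_{g_1}^2$, the two $[n(n+1)c]^2V$ contributions cancel exactly, leaving precisely a linear relation between $\int_{M_1}|\mathring{\text{\rm Ric}}(g_1)|^2\,\text{dvol}$ and $\int_{M_1}|B(g_1)|^2\,\text{dvol}$.

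The main obstacle is verifying that the emerging coefficient of $\int|\mathring{\text{\rm Ric}}(g_1)|^2$, namely $\tfrac{2n}{n-1}\big[\tfrac{2c_1}{n(n+1)}+\tfrac{c_2}{2n}+c_3\big]+\tfrac{8c_1}{n+2}+c_2$, collapses to the clean form $\tfrac{2n}{n-1}\big[\tfrac{4n+2}{(n+1)(n+2)}c_1+\tfrac12 c_2+c_3\big]$ recorded in \eqref{a2b}: the $c_2$-terms combine to $\tfrac{n}{n-1}c_2$, the $c_3$-terms to $\tfrac{2n}{n-1}c_3$, and combining the $c_1$-terms over the common denominator $(n-1)(n+1)(n+2)$ produces numerator $4(n+2)+8(n^2-1)=4n(2n+1)=2n(4n+2)$, exactly matching. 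Conceptually the delicate point is less this algebra than the cancellation of the free constant $[n(n+1)c]^2V$, which is what makes \eqref{a2b} homogeneous and which relies on \eqref{key2} carrying precisely the right constant; everything else is bookkeeping of normalization factors.
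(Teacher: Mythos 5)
Your proposal is correct and follows essentially the same route as the paper: volume and total-scalar-curvature equality from $a_{0,p},a_{1,p}$ (using the nonvanishing of $p^2-2np+\tfrac{n(2n-1)}{3}$), the cohomologically Einstein condition to turn the Chern-number proportionality into \eqref{key2} via Lemma \ref{integralformulas}, and then the $a_{2,p}$ equality in the form \eqref{a2p complex} with \eqref{key2} eliminating $\int s_{g_1}^2$ to yield \eqref{a2b}. Your coefficient bookkeeping, including the combination $4(n+2)+8(n^2-1)=2n(4n+2)$, matches the paper's computation \eqref{key3} exactly.
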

\begin{proof}
Combining the conditions $a_{i,p}(M_1,g_1)=a_{i,p}(M_2,g_2)$ ($i=0,1$) with Patodi's formula (\ref{patodiformula}) implies that
\be\label{a01}\text{Vol}(M_1,g_1)=\text{Vol}(M_2,g_2),
\qquad\int_{M_1}s_{g_1}{\rm dvol}=\int_{M_2}n(n+1)c{\rm dvol}.\ee

Observe from (\ref{volumeelement}) that $\omega_i^n$ ($i=1,2$) are volume forms up to a universal constant. Therefore
\be\label{ric}
\begin{split}
\int_{M_1}
\big(\frac{n-1}{2n}s^2_{g_1}-
|\mathring{\text{\rm Ric}}(g_1)|^2\big)
\cdot\frac{\omega_1^n}{2n(n-1)}
=&
\big(\int_{M_1}c_1^2(M_1)\wedge[\omega_1]^{n-2}\big)
\qquad\big(\text{by (\ref{integralformula2})}\big)\\
=&\frac{\big(\int_{M_1}c_1(M_1)\wedge[\omega_1]^{n-1}\big)^2}
{\int_{M_1}\omega_1^n}
\qquad\Big(\text{by $c_1(M_1)\in\mathbb{R}[\omega_1]$}\Big)\\
=&\frac{\big(\int_{M_1}s_{g_1}\omega_1^{n}\big)^2}
{4n^2\int_{M_1}\omega_1^n}
\qquad\big(\text{by (\ref{integralformula1})}\big)\\
=&\frac{\big(\int_{M_2}n(n+1)c\omega_2^{n}\big)^2}
{4n^2\int_{M_2}\omega_2^n}
\qquad\big(\text{by (\ref{a01})}\big)\\
=&\frac{[n(n+1)c]^2}{4n^2}\int_{M_2}\omega_2^n\\
=&\frac{[n(n+1)c]^2}{4n^2}\int_{M_1}\omega_1^n.
\qquad\big(\text{by (\ref{a01})}\big)
\end{split}
\ee
Rewriting (\ref{ric}) by singling out the term $|\mathring{\text{\rm Ric}}(g_1)|^2$ leads to the desired equality (\ref{key2}).

For (\ref{a2b}), the formula (\ref{a2p complex}) for K\"{a}hler manifolds yields
\be\label{integralformula4}a_{2,p}(M_2,g_2)=\int_{M_2}
\big[\frac{2c_1}{n(n+1)}+
\frac{1}{2n}c_2+c_3\big][n(n+1)c]^2{\rm dvol}.\ee
The condition $a_{2,p}(M_1,g_1)=a_{2,p}(M_2,g_2)$ then implies that
\be\label{key3}\begin{split}
0=&\int_{M_1}\big[\frac{2}{n(n+1)}c_1+
\frac{1}{2n}c_2+c_3\big]\big\{s_{g_1}^2-[n(n+1)c]^2\}{\rm dvol}\\
&+\big(\frac{8}{n+2}c_1
+c_2\big)\int_{M_1}|\mathring{\text{\rm Ric}}(g_1)|^2{\rm dvol}+
4c_1\int_{M_1}|B(g_1)|^2{\rm dvol}\\
=&\int_{M_1}\frac{2n}{n-1}\big[\frac{2}{n(n+1)}c_1+
\frac{1}{2n}c_2+c_3\big]
|\mathring{\text{\rm Ric}}(g_1)|^2{\rm dvol}\\
&+\big(\frac{8}{n+2}c_1
+c_2\big)\int_{M_1}|\mathring{\text{\rm Ric}}(g_1)|^2{\rm dvol}+
4c_1\int_{M_1}|B(g_1)|^2{\rm dvol}\qquad\big(\text{by (\ref{key2})}\big)\\
=&\frac{2n}{n-1}\big[\frac{4n+2}{(n+1)(n+2)}c_1+
\frac{1}{2}c_2+c_3\big]
\int_{M_1}|\mathring{\text{\rm Ric}}(g_1)|^2
{\rm dvol}+4c_1\int_{M_1}|B(g_1)|^2{\rm dvol}.
\end{split}\ee
This yields the desired equality (\ref{a2b}).
\end{proof}

Lemma \ref{keylemma} yields the following consequence.
\begin{proposition}\label{prop2}
Let $(M_i,g_i,J_i)~(i=1,2)$ be as above and the pair $(p,n)$ satisfies the following restrictions
$$p^2-2np+\frac{n(2n-1)}{3}\neq0,\qquad \frac{4n+2}{(n+1)(n+2)}c_1+
\frac{1}{2}c_2+c_3>0,\qquad c_1>0.$$
Then $(M_1,g_1,J_1)$ is of constant HSC $c$.
\end{proposition}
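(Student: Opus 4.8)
The plan is to read the conclusion straight off the two integral identities (\ref{key2}) and (\ref{a2b}) supplied by Lemma \ref{keylemma}, whose standing hypothesis $p^2-2np+\frac{n(2n-1)}{3}\neq0$ is exactly the first restriction imposed in the proposition. First I would note that, since $n\geq2$, the prefactor $\frac{2n}{n-1}$ appearing in (\ref{a2b}) is strictly positive. Combined with the two remaining assumptions $\frac{4n+2}{(n+1)(n+2)}c_1+\frac{1}{2}c_2+c_3>0$ and $c_1>0$, this means that (\ref{a2b}) exhibits $0$ as a sum of two terms, each a strictly positive constant times a manifestly nonnegative integral, namely $\int_{M_1}|\mathring{\text{\rm Ric}}(g_1)|^2{\rm dvol}$ and $\int_{M_1}|B(g_1)|^2{\rm dvol}$. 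Hence both integrals vanish, forcing $\mathring{\text{\rm Ric}}(g_1)\equiv0$ and $B(g_1)\equiv0$ pointwise on $M_1$.

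Next I would invoke the curvature characterization in Lemma \ref{normrelation2}. The relation $|P^c|^2=\frac{2}{n+2}|\mathring{\text{\rm Ric}}(g_1)|^2$ gives $P^c(g_1)\equiv0$, and together with $B(g_1)\equiv0$ this is precisely the stated criterion for $g_1$ to be of constant holomorphic sectional curvature. It then remains only to identify that constant as the prescribed $c$. Since $\mathring{\text{\rm Ric}}(g_1)\equiv0$, the metric $g_1$ is K\"{a}hler--Einstein and, being connected of real dimension $2n\geq4$, has constant scalar curvature $s_{g_1}$ by Schur's lemma. Feeding $\mathring{\text{\rm Ric}}(g_1)\equiv0$ back into (\ref{key2}) gives $\int_{M_1}s_{g_1}^2{\rm dvol}=[n(n+1)c]^2\,\text{Vol}(M_1)$, whence $s_{g_1}^2=[n(n+1)c]^2$; the volume-normalized comparison $\int_{M_1}s_{g_1}{\rm dvol}=n(n+1)c\cdot\text{Vol}(M_1)$ from (\ref{a01}) then fixes the sign, yielding $s_{g_1}=n(n+1)c$. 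As this is exactly the scalar curvature of the model of constant HSC $c$, the metric $g_1$ has constant HSC equal to $c$, as required.

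There is essentially no remaining obstacle at this stage, because Lemma \ref{keylemma} has already absorbed all the analytic content: the comparison of the heat coefficients $a_{i,p}$ $(i=0,1,2)$ and the cohomologically-Einstein input $c_1(M_1)\in\mathbb{R}[\omega_1]$ (through (\ref{integralformula1}) and (\ref{integralformula2})) have been used to produce (\ref{key2}) and (\ref{a2b}). What is left is a short positivity-plus-bookkeeping argument. The only point deserving care is the final identification of the constant, ensuring one recovers the prescribed value $c$ rather than merely \emph{some} constant HSC; this is precisely what the scalar-curvature normalization in the previous paragraph settles. (In the actual deployment of Proposition \ref{prop2}, the genuine difficulty is instead verifying the two positivity conditions $c_1>0$ and $\frac{4n+2}{(n+1)(n+2)}c_1+\frac12 c_2+c_3>0$ for the relevant ranges of $(p,n)$, which is a separate combinatorial matter handled via the machinery of Subsection \ref{elementary} and Lemma \ref{elementary lemma}.)
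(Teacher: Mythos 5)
Your proof is correct and takes essentially the same route as the paper's: the positivity of the two coefficients in (\ref{a2b}) forces $\int_{M_1}|\mathring{\text{\rm Ric}}(g_1)|^2\,{\rm dvol}=\int_{M_1}|B(g_1)|^2\,{\rm dvol}=0$, hence $g_1$ has constant HSC, identified as $c$ via (\ref{a01}). Your final normalization step (Schur's lemma plus feeding $\mathring{\text{\rm Ric}}(g_1)\equiv0$ back into (\ref{key2}) to fix the sign) simply spells out in detail what the paper compresses into ``whose value is precisely $c$ due to (\ref{a01})''.
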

\begin{proof}
Via (\ref{a2b}), these restrictions on $(p,n)$ imply that $\mathring{\text{\rm Ric}}(g_1)=0$ and $B(g_1)=0$. These tell us that the metric $g_1$ is of constant HSC, whose value is precisely $c$ due to (\ref{a01}).
\end{proof}

In view of Proposition \ref{prop2}, (the second part of) Lemma \ref{Pell equation} and Lemma \ref{c1}, Theorem \ref{theorem2} follows from the following
\begin{lemma}
Assume that $p\leq n$, $2n=m$ and $n\geq2$. Then
$$\frac{4n+2}{(n+1)(n+2)}c_1+
\frac{1}{2}c_2+c_3>0.$$
\end{lemma}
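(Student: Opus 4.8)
The plan is to follow the reduction framework of Subsection~\ref{elementary}, exactly as in the proof of Lemma~\ref{coefficient lemma}, and to cut the claim down to the positivity of a single quartic in $n$ at the two endpoints of the admissible $p$-range. First I would put $m=2n$ and expand $L:=\frac{4n+2}{(n+1)(n+2)}c_1+\frac12 c_2+c_3$ into the normal form $\alpha\binom{m}{p}+\beta\binom{m-2}{p-1}+\gamma\binom{m-4}{p-2}$ by reading off, from \eqref{patodicoefficient}, the coefficient of each binomial. A direct computation yields
\[
\alpha=\frac{n^2+5n+3}{90(n+1)(n+2)},\qquad
\beta=\frac{n(n-1)}{12(n+1)(n+2)},\qquad
\gamma=-\frac{n(n-1)}{2(n+1)(n+2)},
\]
so that $\alpha,\beta>0$ and $\gamma<0$ for $n\ge2$. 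The decisive structural identity is $\gamma=-6\beta$, which trivializes the critical-point analysis below.

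By Subsection~\ref{elementary}, proving $L>0$ for $0\le p\le n$ is equivalent to proving $f(p,2n,\alpha,\beta,\gamma)>0$ on that range, and I would organize this through Lemma~\ref{elementary lemma}. Substituting $\gamma=-6\beta$ collapses the bracketed quadratic factor of $g'(p)$ in \eqref{first} to $-\beta\bigl[12p^2-12mp+m(m+1)\bigr]$; correspondingly \eqref{criticalpointvalue} gives $g''(p_1=\tfrac m2)=2\beta m(2m-1)>0$, while the discriminant specializes to $\Delta=48\beta^2 m(2m-1)>0$. Hence the two remaining critical points $p_{2,3}=\frac m2\pm\frac{\sqrt{3m(2m-1)}}{6}$ are real, and since $\gamma<0$ we read off $g''(p_{2,3})<0$ from \eqref{criticalpointvalue}. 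Therefore $p_1=\frac m2=n$ is the unique interior local minimum of $g$, whereas $p_3=n-\frac{\sqrt{6n(4n-1)}}{6}$ lies strictly between $0$ and $n$ and is a local maximum. Because $p_1$ coincides with the right endpoint of $[0,n]$ and the only interior critical point of $g$ on $[0,n]$ is the maximum $p_3$, the minimum of $f$ over $[0,n]$ must be attained at $p=0$ or $p=n$.

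It then remains to verify positivity at the two endpoints. Since $g$ vanishes at $p=0$, one has $f\big|_{p=0}=\alpha\, m(m-1)(m-2)(m-3)>0$ because $\alpha>0$ and $m=2n\ge4$. At $p=n=\frac m2$, the relation $\gamma=-6\beta$ again simplifies the evaluation and I expect
\[
f\big|_{p=n}=\frac{n(n-1)}{90(n+1)(n+2)}\bigl(n^4+63n^3-100n^2-36n+36\bigr).
\]
The bracketed quartic is the only genuine obstacle: it is negative at $n=1$, so the hypothesis $n\ge2$ is truly used. For $n\ge2$, however, the crude bound $63n^3\ge126n^2$ reduces it to $n^4+26n^2-36n+36$, and $26n^2-36n=2n(13n-18)>0$ settles its positivity at once. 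Thus both endpoint values are positive, whence $f(p,2n,\alpha,\beta,\gamma)>0$ on $[0,n]$ and $L>0$, as claimed.

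The bulk of the effort is the bookkeeping that produces the clean closed forms of $\alpha,\beta,\gamma$ — in particular the relation $\gamma=-6\beta$, which is what makes $g''(p_1)>0$ and $\Delta>0$ fall out cleanly — together with the closed form of $f\big|_{p=n}$; everything downstream (the endpoint reduction and the quartic bound) is routine. Accordingly, I expect the main care to be needed only in confirming the arithmetic of these explicit coefficients and in the elementary estimate $n^4+63n^3-100n^2-36n+36>0$ for $n\ge2$.
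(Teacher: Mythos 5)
Your proposal is correct and takes essentially the same route as the paper: the paper's proof of this lemma simply checks $p=0,1$ directly and then invokes the strategy of Lemma~\ref{coefficient lemma} as carried out in \cite[Prop.~4.5, \S 5.1]{Li}, which is exactly the reduction you perform — expand the combination in the normal form of Subsection~\ref{elementary}, apply Lemma~\ref{elementary lemma} (your identities $\gamma=-6\beta$, $g''(\tfrac{m}{2})=2\beta m(2m-1)$, $\Delta=48\beta^2m(2m-1)$ all check out), conclude the minimum over $[0,n]$ is attained at an endpoint, and verify $f\big|_{p=0}>0$ and $f\big|_{p=n}=\frac{n(n-1)}{90(n+1)(n+2)}\bigl(n^4+63n^3-100n^2-36n+36\bigr)>0$ for $n\geq2$. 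The only difference is that you give a self-contained argument (treating $p=0,1$ uniformly within the real-variable analysis, which is legitimate since the binomial-to-polynomial identity holds there with the usual conventions) where the paper defers to the reference.
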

\begin{proof}
The cases $p=0$ and $p=1$ can be directly checked. For $p\geq2$, the strategy is similar to that in the proof of Lemma \ref{coefficient lemma} and has been carried out in \cite[Prop. 4.5, \S 5.1]{Li}.
\end{proof}

Let us end this section by explaining that how the exceptional case $(p,n)=(2,8)$ can be dealt with in the case of the HSC $c>0$, i.e., in the case of $(M_2,g_2,J_2)=\big(\mathbb{C}P^8(c),g_0,J_0\big)$, due to a recent result of Fujita (\cite{Fu}). If $(p,n)=(2,8)$, then $c_1(2,16)=0$ due to Lemma \ref{c1}, i.e., in (\ref{a2b}) the coefficient in front of $|B(g_1)|^2$  vanishes and from the proof of Proposition \ref{prop2} we are \emph{not} able to conclude that $B(g_1)=0$ but \emph{only} conclude that the K\"{a}hler metric $g_1$ is Einstein. Nevertheless, if the constant HSC $c$ in question is \emph{positive}, then in this case $(M_1,g_1,J_1)$ is a \emph{Fano K\"{a}hler-Einstein} manifold. An equivalent form of the main result in \cite{Fu} (see \cite[Theorem 2.2]{Li}) states that if both the scalar curvature and the volume of a Fano K\"{a}hler-Einstein manifold are equal to those of $\big(\mathbb{C}P^n(c),g_0,J_0\big)$, then it is holomorphically isometric to $\big(\mathbb{C}P^n(c),g_0,J_0\big)$, from which and (\ref{a01}) the exceptional case $(p,n)=(2,8)$ in Corollary \ref{coro1} follows.

\section{Proof of Proposition \ref{almost isospectral lemma}}\label{proof4}
Let $\{\lambda_{i,p}^{(1)}~|~i\geq1\}$ and $\{\lambda_{i,p}^{(2)}~|~i\geq1\}$ be the $p$-spectral sets of Riemannian manifolds $(M_1,g_1)$ and $(M_2,g_2)$ respectively.

\subsection{Preliminaries}
First we derive the following
\begin{lemma}\label{V1=V2}
The volumes $V_1$ and $V_2$ are equal and hence $a_{0,p}^{(1)}=a_{0,p}^{(2)}$, where $V_i:=\text{Vol}(M_i,g_i)$.
\end{lemma}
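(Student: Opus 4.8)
The plan is to compare the two heat traces and show that their difference is of lower order than the leading Weyl term $t^{-m/2}$, so that the leading coefficients — which by (\ref{mpgformula}) are exactly $\binom{m}{p}V_j(4\pi)^{-m/2}$ — must coincide. Concretely, set
\[
D(t):=\sum_{i\ge1}\big(e^{-\lambda_{i,p}^{(1)}t}-e^{-\lambda_{i,p}^{(2)}t}\big)
=\text{Trace}(e^{-t\Delta_p^{(1)}})-\text{Trace}(e^{-t\Delta_p^{(2)}}).
\]
By (\ref{mpgformula}) this equals $\binom{m}{p}(V_1-V_2)(4\pi t)^{-m/2}+O(t^{1-m/2})$ as $t\downarrow0$, so it suffices to prove $D(t)=o(t^{-m/2})$; this immediately forces $V_1=V_2$ and hence $a_{0,p}^{(1)}=a_{0,p}^{(2)}$.

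First I would estimate $D(t)$ termwise. By the mean value theorem, for each $i$ there is $\xi_i$ between $\lambda_{i,p}^{(1)}$ and $\lambda_{i,p}^{(2)}$ with
\[
e^{-\lambda_{i,p}^{(1)}t}-e^{-\lambda_{i,p}^{(2)}t}=-t\,(\lambda_{i,p}^{(1)}-\lambda_{i,p}^{(2)})\,e^{-\xi_i t},
\]
whence $|D(t)|\le t\sum_{i\ge1}\big|\lambda_{i,p}^{(1)}-\lambda_{i,p}^{(2)}\big|\,e^{-\xi_i t}$. The $\alpha$-isospectral hypothesis (\ref{spectral}) supplies a constant $C$ and an index $i_0$ with $\big|\lambda_{i,p}^{(1)}-\lambda_{i,p}^{(2)}\big|\le C i^{-\alpha}$ for $i\ge i_0$, while Weyl's asymptotic law for the $p$-Laplacian on each closed manifold gives a lower bound $\lambda_{i,p}^{(j)}\ge c'i^{2/m}$ $(j=1,2)$ for large $i$, with $c'>0$ depending only on $m$, $p$ and the positive finite volumes. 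Hence $\xi_i\ge c'i^{2/m}$ and $e^{-\xi_i t}\le e^{-c'i^{2/m}t}$ for $i\ge i_0$.

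I would then split $D(t)$ into the finitely many terms $i<i_0$, which contribute $O(t)$ (a finite sum of factors each $O(t)$ by the same mean value estimate), and the tail $\sum_{i\ge i_0}i^{-\alpha}e^{-c'i^{2/m}t}$. The tail is monotone in $i$ and comparable to $\int_{1}^{\infty}x^{-\alpha}e^{-c'x^{2/m}t}\,dx$; the substitution $x=(y/(c't))^{m/2}$ turns this into a Gamma-type integral and shows it is $O\big(t^{(\alpha-1)m/2}\big)$ when $\alpha<1$ (with the obvious modifications — bounded, resp. logarithmic — when $\alpha>1$, resp. $\alpha=1$). Therefore $|D(t)|=O\big(t^{1+(\alpha-1)m/2}\big)=O\big(t^{1-\frac m2+\frac m2\alpha}\big)$, which is $o(t^{-m/2})$ precisely when $1+\frac m2\alpha>0$, i.e. when $\alpha>-\frac2m$ (cf. Remark \ref{remark iso}). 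Combined with the first paragraph this yields $V_1=V_2$.

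The main obstacle I expect is the tail estimate: one must justify the integral comparison for $\sum i^{-\alpha}e^{-c'i^{2/m}t}$ uniformly as $t\downarrow0$ and extract the sharp power of $t$, handling the ranges $\alpha<1$, $\alpha=1$, $\alpha>1$ separately — this is exactly where the cutoff $\min\{\alpha,1\}$ of Proposition \ref{almost isospectral lemma} originates, though for the volume statement only the very mild requirement $\alpha>-\tfrac2m$ is needed. A secondary point needing care is that Weyl's law is applied to each manifold with its own volume, so $c'$ must be taken as the minimum of the two (both positive); this avoids any circularity with the as-yet-unproven equality $V_1=V_2$.
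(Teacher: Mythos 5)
Your proposal is correct, but it takes a genuinely different route from the paper's proof of Lemma \ref{V1=V2}. The paper argues entirely at the level of eigenvalue asymptotics, with no heat trace at all: writing Weyl's law (\ref{Weyl}) as $\lambda_{i,p}^{(j)}=C_{m,p}V_j^{-2/m}i^{2/m}+o(i^{2/m})$ for $j=1,2$, the difference of the two spectra is $C_{m,p}\big(V_1^{-2/m}-V_2^{-2/m}\big)i^{2/m}+o(i^{2/m})$, so if $V_1\neq V_2$ it grows like $i^{2/m}$, contradicting the $\alpha$-isospectral bound $\big|\lambda_{i,p}^{(1)}-\lambda_{i,p}^{(2)}\big|=O(i^{-\alpha})$ the moment $\frac{2}{m}+\alpha>0$. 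This is essentially a three-line deduction that isolates exactly what volume equality needs. You instead pass through the heat-trace difference $D(t)$ and the expansion (\ref{mpgformula}), reduce to $D(t)=o(t^{-m/2})$, and then estimate $D(t)$ via the mean value theorem and the tail sum $\sum_i i^{-\alpha}e^{-c'i^{2/m}t}$; in effect you are re-proving a version of the paper's inequality (\ref{3}) and Lemma \ref{lastlemma}, which the paper deliberately defers until \emph{after} the volume lemma, since only then is the single Weyl constant $C_0$ of (\ref{Weyl2}) available for the sequence $\lambda_{i,p}=\min\{\lambda_{i,p}^{(1)},\lambda_{i,p}^{(2)}\}$. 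Your observation that circularity is avoided by taking $c'$ to be the minimum of the two separate Weyl lower bounds (each positive, each depending on its own volume) is exactly the right fix and makes your route legitimate. The trade-off: the paper's argument is shorter and pinpoints that the volume statement needs nothing beyond Weyl's law and $\alpha>-\frac{2}{m}$, whereas your argument is heavier but self-contained and anticipates machinery that is needed anyway for the higher coefficients $a_{i,p}$, $i\geq 1$, in Proposition \ref{almost isospectral lemma}; the case split at $\alpha<1$, $\alpha=1$, $\alpha>1$ you flag is precisely the content of (\ref{omegacondition}).
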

\begin{proof}
Weyl's asymptotic formula implies that
\be\label{Weyl}
\lim_{i\rightarrow\infty}\frac{\lambda_{i,p}^{(1)}}
{i^{\frac{2}{m}}}=C_{m,p}V_1^{-\frac{2}{m}},\qquad
\lim_{i\rightarrow\infty}\frac{\lambda_{i,p}^{(2)}}
{i^{\frac{2}{m}}}=C_{m,p}V_2^{-\frac{2}{m}},\ee
where $C_{m,p}$ is a universal positive constant depending on $m$ and $p$. Note that
\be\begin{split}
&\limsup_{i\rightarrow\infty}\frac{\big|\lambda_{i,p}^{(1)}-\lambda_{i,p}^{(2)}\big|}
{i^{-\alpha}}\\
=&\limsup_{i\rightarrow\infty}\Big|\big[\frac{\lambda_{i,p}^{(1)}}{i^{\frac{2}{m}}}-C_{m,p}V_1^{-\frac{2}{m}}\big]
-\big[\frac{\lambda_{i,p}^{(2)}}{i^{\frac{2}{m}}}-C_{m,p}V_2^{-\frac{2}{m}}\big]
+C_{m,p}\big(V_1^{-\frac{2}{m}}-V_2^{-\frac{2}{m}}\big)
\Big|i^{\frac{2}{m}+\alpha}.\end{split}
\ee
As commented in Remark \ref{remark iso}, we may assume that $\alpha>-\frac{2}{m}$. So combining the conditions (\ref{spectral}), (\ref{Weyl}) and $\frac{2}{m}+\alpha>0$ yields that $V_1=V_2$.
\end{proof}
(\ref{Weyl}) can now be rewritten as
\be\label{Weyl2}
\lim_{i\rightarrow\infty}\frac{\lambda_{i,p}^{(1)}}
{i^{\frac{2}{m}}}=\lim_{i\rightarrow\infty}\frac{\lambda_{i,p}^{(2)}}
{i^{\frac{2}{m}}}=C_{m,p}V^{-\frac{2}{m}},\qquad V:=V_1=V_2.\ee

By (\ref{mpgformula}) we have
\be\label{2}\sum_{i=0}^{\infty}\big[\exp(-\lambda_{i,p}^{(1)}t)-
\exp(-\lambda_{i,p}^{(2)}t)\big]=
\frac{1}{(4\pi)^{\frac{m}{2}}}\sum_{i=0}^N
\big[a_{i,p}^{(1)}-a_{i,p}^{(2)}\big]
t^{i-\frac{m}{2}}+O(t^{N-\frac{m}{2}+1}).\ee
On the other hand,
\be\label{3}\begin{split}
&\Big|\sum_{i=0}^{\infty}\big[\exp(-\lambda_{i,p}^{(1)}t)-
\exp(-\lambda_{i,p}^{(2)}t)\big]\Big|\\
\leq& t\sum_{i=1}^{\infty}\exp(-\lambda_{i,p}t)\big|\lambda_{i,p}^{(1)}-\lambda_{i,p}^{(2)}\big|
\qquad\Big(\lambda_{i,p}:=\min\{\lambda_{i,p}^{(1)},\lambda_{i,p}^{(2)}\}\Big)\\
\leq& Ct\sum_{i=1}^{\infty}\exp(-\lambda_{i,p}t)i^{-\alpha},
\qquad\big(\text{by (\ref{spectral})}\big)\\
\end{split}\ee
where $C$ is some positive constant.

We shall show in the next subsection that Proposition \ref{almost isospectral lemma} can be derived from the following lemma.
\begin{lemma}\label{lastlemma}
Let $\omega(t):=\sum_{i=1}^{\infty}\exp(-\lambda_{i,p}t)i^{-\alpha}$, which satisfies
\begin{eqnarray}\label{omegacondition}
\left\{ \begin{array}{ll}
\text{$\omega(t)$ is bounded above},&\alpha>1,\\
\limsup_{t\rightarrow0^+}
\frac{\omega(t)}{t^{\frac{m}{2}(\alpha-1)}}<\infty,&\alpha<1,\\
\limsup_{t\rightarrow0^+}\frac{\omega(t)}{\ln\frac{1}{t}}<\infty,&\alpha=1.\\
\end{array} \right.
\end{eqnarray}
\end{lemma}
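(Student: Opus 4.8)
The plan is to combine Weyl's asymptotic law (\ref{Weyl2}) with a monotone integral comparison of Laplace type, whose outcome is governed by the convergence at the origin of a Gamma-type integral and therefore splits naturally into the three ranges of $\alpha$. First I would extract a lower bound on the $\lambda_{i,p}$ from (\ref{Weyl2}): since $\lambda_{i,p}=\min\{\lambda_{i,p}^{(1)},\lambda_{i,p}^{(2)}\}$ and both sequences obey Weyl's law with the same constant $c_0:=C_{m,p}V^{-2/m}>0$, the quotient $\lambda_{i,p}/i^{2/m}$ also tends to $c_0$. Fixing any $a$ with $0<a<c_0$, there is then an index $i_0$ with $\lambda_{i,p}\geq a\,i^{2/m}$ for all $i\geq i_0$. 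Splitting $\omega(t)$ into the finitely many terms with $i<i_0$, which contribute at most the fixed constant $\sum_{i<i_0}i^{-\alpha}$, and the remaining tail, and using $\exp(-\lambda_{i,p}t)\leq\exp(-a\,i^{2/m}t)$ on the tail, I reduce the whole statement to an upper bound for $\sum_{i\geq i_0}\exp(-a\,i^{2/m}t)\,i^{-\alpha}$.

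For the tail the function $x\mapsto \exp(-a\,x^{2/m}t)\,x^{-\alpha}$ is a product of two positive decreasing factors, hence decreasing on $[1,\infty)$, so the monotone comparison $\sum_{i\geq i_0}\exp(-a\,i^{2/m}t)\,i^{-\alpha}\leq\int_{i_0-1}^{\infty}\exp(-a\,x^{2/m}t)\,x^{-\alpha}\,dx$ holds. The substitution $u=a\,t\,x^{2/m}$ converts this integral into $\tfrac{m}{2}(at)^{(\alpha-1)m/2}\int e^{-u}u^{s-1}\,du$ with $s:=\tfrac{m}{2}(1-\alpha)$, so that the behaviour as $t\downarrow0$ is dictated entirely by the exponent $s-1$ of $u$ at the lower endpoint.

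It then remains to read off the three regimes. When $\alpha>1$ the crudest estimate already works: $\exp(-\lambda_{i,p}t)\leq1$ gives $\omega(t)\leq\sum_{i\geq1}i^{-\alpha}<\infty$, so $\omega$ is bounded above and no integral is needed. When $\alpha<1$ we have $s>0$, the $u$-integral extended to $(0,\infty)$ equals the finite constant $\Gamma(s)$, and the prefactor $(at)^{(\alpha-1)m/2}$ yields $\omega(t)\leq\mathrm{const}\cdot t^{m(\alpha-1)/2}+O(1)$; since $t^{m(\alpha-1)/2}\to\infty$ this gives $\limsup_{t\to0^+}\omega(t)/t^{m(\alpha-1)/2}<\infty$. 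When $\alpha=1$ we have $s=0$ and a logarithmic divergence: after the substitution the integral equals $\tfrac{m}{2}\int_{\delta(t)}^{\infty}e^{-u}u^{-1}\,du$, where the lower endpoint $\delta(t)$ is a fixed positive multiple of $t$; splitting at $u=1$ isolates $\int_{\delta(t)}^{1}u^{-1}\,du=\ln\!\big(1/\delta(t)\big)=\ln(1/t)+O(1)$, which produces exactly the logarithmic growth and hence $\limsup_{t\to0^+}\omega(t)/\ln\frac1t<\infty$.

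The integral comparison and the change of variables are routine; the only point demanding care is the behaviour of the $u$-integral at its lower endpoint, which is precisely what distinguishes the cases $\alpha\gtrless1$ and reproduces the three different normalizations in (\ref{omegacondition}). The main (though mild) subtlety will be the borderline $\alpha=1$, where the plain series $\sum i^{-1}$ diverges and it is the heat factor $\exp(-a\,i^{2/m}t)$ that regularizes the tail into a logarithm.
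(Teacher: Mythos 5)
Your proposal is correct, and it rests on the same two pillars as the paper's own proof: Weyl's law (\ref{Weyl2}) to extract the eigenvalue lower bound $\lambda_{i,p}\geq a\,i^{2/m}$ beyond some index, and a comparison of the resulting tail sum with an integral, with the trivial bound $\omega(t)\leq\sum i^{-\alpha}$ handling $\alpha>1$. Where you genuinely diverge is in the execution of the two hard cases. The paper keeps the variable $s=it^{m/2}$, reads the tail as a Riemann sum with mesh $t^{m/2}$, and bounds its limsup by $\int_0^\infty\exp\big[-(C_0-\epsilon)s^{2/m}\big]s^{-\alpha}\,ds$ for $\alpha<1$; for $\alpha=1$ it splits the sum at the $t$-dependent index $i=t^{-m/2}$, the head giving the harmonic-sum bound $1+\frac{m}{2}\ln\frac1t$ and the tail a convergent integral. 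You instead split at a fixed $i_0$, dominate the tail by $\int_{i_0-1}^{\infty}e^{-atx^{2/m}}x^{-\alpha}\,dx$ via monotone comparison, and convert it by $u=atx^{2/m}$ into $\tfrac m2(at)^{\frac m2(\alpha-1)}\int_{\delta(t)}^{\infty}e^{-u}u^{s-1}\,du$ with $s=\tfrac m2(1-\alpha)$, so that both the power $t^{\frac m2(\alpha-1)}$ (for $\alpha<1$, where the $u$-integral is at most $\Gamma(s)$) and the logarithm (for $\alpha=1$, where $s=0$ and the lower endpoint $\delta(t)\propto t$ yields $\ln\frac1t+O(1)$) come out of a single incomplete-Gamma formula. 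This repackaging buys something: it unifies the cases $\alpha<1$ and $\alpha=1$, it avoids the paper's unjustified step ``limsup of Riemann sums $\leq$ improper integral'', and it dispenses with the final $\epsilon\to0$ argument, since fixing any $a<C_0$ already gives finiteness. The paper's route, in turn, produces the explicit asymptotic constants (e.g.\ the bound $\tfrac m2$ for $\alpha=1$) somewhat more transparently.

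Two small caveats. First, your monotonicity claim (``product of two positive decreasing factors'') requires $\alpha\geq0$; for $\alpha<0$ — which the case ``$\alpha<1$'' of the lemma formally includes, and which lies in the meaningful range $\alpha>-\frac2m$ of Proposition \ref{almost isospectral lemma} — the factor $x^{-\alpha}$ is increasing, the integrand has an interior maximum at $x^*\sim t^{-m/2}$, and the sum--integral comparison needs the standard repair of adding the single term $f(x^*)=O\big(t^{\frac m2\alpha}\big)$, which is still dominated by $t^{\frac m2(\alpha-1)}$; so the conclusion survives, but the one-line justification does not (the paper's Riemann-sum formulation quietly glosses over the same point). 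Second, in the case $\alpha=1$ you should take $i_0\geq2$, so that $\delta(t)=at(i_0-1)^{2/m}$ really is a positive multiple of $t$; with $i_0=1$ the substituted integral would start at $u=0$ and diverge.
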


\subsection{Proof of Proposition \ref{almost isospectral lemma}}
Assume for the moment the validity of (\ref{omegacondition}), we show in this subsection how to apply it to prove Proposition \ref{almost isospectral lemma}.

\begin{proof}
If $a_{i,p}^{(1)}=a_{i,p}^{(2)}$ for all $i$, Proposition \ref{almost isospectral lemma} clearly holds true. Hence by Lemma \ref{V1=V2} we may assume that $i_0\in\mathbb{Z}_{>0}$ is such that
\be\label{i_0}
\text{$a_{i_0,p}^{(1)}\neq a_{i_0,p}^{(2)}$ and
$a_{i,p}^{(1)}=a_{i,p}^{(2)}$ whenever $i<i_0$.}
\ee
Combining (\ref{2}) with (\ref{i_0}) we have
$$\limsup_{t\rightarrow0^+}
\frac{\Big|\sum_{i=0}^{\infty}\big[\exp(-\lambda_{i,p}^{(1)}t)-
\exp(-\lambda_{i,p}^{(2)}t)\big]\Big|}
{t^{i_0-\frac{m}{2}}}\in(0,\infty),$$
and hence due to (\ref{3}) we have
\be\label{8}\limsup_{t\rightarrow0^+}\frac{t\omega(t)}
{t^{i_0-\frac{m}{2}}}\in(0,\infty].\ee

\emph{Case $1$:} $\alpha>1$. In this case
\be\label{9}\frac{t\omega(t)}
{t^{i_0-\frac{m}{2}}}=\omega(t)\cdot t^{1+\frac{m}{2}-i_0}.\ee
Combining this with the first case in (\ref{omegacondition}) and (\ref{8}) yields that $1+\frac{m}{2}-i_0\leq0$ and therefore $i_0\geq1+\frac{m}{2}.$ By the definition (\ref{i_0}) of $i_0$, we know that $a_{i,p}^{(1)}=a_{i,p}^{(2)}$ whenever $i<1+\frac{m}{2}$ and prove Proposition \ref{almost isospectral lemma} in the case of $\alpha>1$.

\emph{Case $2$:} $\alpha<1$. In this case
\be\label{10}\frac{t\omega(t)}
{t^{i_0-\frac{m}{2}}}=\frac{\omega(t)}{t^{\frac{m}{2}(\alpha-1)}}\cdot t^{1+\frac{m}{2}\alpha-i_0}.\ee
Combining this with the second case in (\ref{omegacondition}) and (\ref{8}) yields $1+\frac{m}{2}\alpha-i_0\leq0$,, and hence proves it in this case due to the same reasoning as above.

\emph{Case $2$:} $\alpha=1$. In this case
\be\label{10}\frac{t\omega(t)}
{t^{i_0-\frac{m}{2}}}=\frac{\omega(t)}{\ln{\frac{1}{t}}}\cdot t^{1+\frac{m}{2}-i_0}\ln{\frac{1}{t}}.\ee
Combining this with the third case in (\ref{omegacondition}) and (\ref{8}) also yields $1+\frac{m}{2}-i_0\leq0$.
This, as above, proves Proposition \ref{almost isospectral lemma} in this case.
\end{proof}

\subsection{Proof of Lemma \ref{lastlemma}}
In order to complete the proof of Proposition \ref{almost isospectral lemma}, it suffices to show Lemma \ref{lastlemma}.
\begin{proof}
First
$$\omega(t)=\sum_{i=1}^{\infty}\exp(-\lambda_{i,p}t)i^{-\alpha}
<\sum_{i=1}^{\infty}i^{-\alpha}<\infty~\text{whenever $\alpha>1$}.$$
This proves the first case in (\ref{omegacondition}).

Next we treat the cases of $\alpha<1$ and $\alpha=1$.
Recall in (\ref{3}) that $\lambda_{i,p}=\min\{\lambda_{i,p}^{(1)},\lambda_{i,p}^{(2)}\}$
%=
%\frac{\lambda_{i,p}^{(1)}+\lambda_{i,p}^{(2)}-\big|\lambda_{i,p}^{(1)}-\lambda_{i,p}^{(2)}\big|}{2},$$
and so (\ref{Weyl2}) yields that
\be\label{4}\lim_{i\rightarrow\infty}\frac{\lambda_{i,p}}
{i^{\frac{2}{m}}}=C_{m,p}V^{-\frac{2}{m}}=:C_0.\ee
By (\ref{4}), for any $\epsilon>0$, there exists an $N\in\mathbb{Z}_{>0}$ such that
\be\label{5}\big|\frac{\lambda_{i,p}}
{i^{\frac{2}{m}}}-C_0\big|<\epsilon,\qquad \text{whenever $i>N$}.\ee

\emph{Case $2$:} $\alpha<1$. Then
\be\begin{split}
\omega(t)&=\sum_{i=1}^{\infty}e^{-\lambda_{i,p}
t}i^{-\alpha}\\
&=\sum_{i\leq N}e^{-\lambda_{i,p}
t}i^{-\alpha}+
\sum_{i> N}\exp\big[-\frac{\lambda_{i,p}}{i^{\frac{2}{m}}}
(it^{\frac{m}{2}})^{\frac{2}{m}}\big]i^{-\alpha}\\
&\leq\sum_{i\leq N}e^{-\lambda_{i,p}
t}i^{-\alpha}+
\sum_{i> N}\exp\big[-(C_0-\epsilon)(it^{\frac{m}{2}})^{\frac{2}{m}}\big]i^{-\alpha},
\qquad\big(\text{by (\ref{5})}\big)
\end{split}\ee
and hence
\be\label{6}
\frac{\omega(t)}{t^{\frac{m}{2}(\alpha-1)}}\leq
\sum_{i\leq N}e^{-\lambda_{i,p}
t}i^{-\alpha}t^{\frac{m}{2}(1-\alpha)}+
\sum_{i> N}\exp\big[-(C_0-\epsilon)(it^{\frac{m}{2}})^{\frac{2}{m}}\big](it^{\frac{m}{2}})^{-\alpha}t^{\frac{m}{2}}.
\ee
Taking upper limits on both sides of (\ref{6}) yields
\be\begin{split}\limsup_{t\rightarrow0^+}\frac{\omega(t)}{t^{\frac{m}{2}(\alpha-1)}}
&\leq
\limsup_{t\rightarrow0^+}\sum_{i> N}\exp\big[-(C_0-\epsilon)(it^{\frac{m}{2}})^{\frac{2}{m}}\big]
(it^{\frac{m}{2}})^{-\alpha}t^{\frac{m}{2}}\\
&\leq\int_0^{\infty}\exp\big[-(C_0-\epsilon)s^{\frac{2}{m}}\big]s^{-\alpha}ds<\infty.
\end{split}\nonumber\ee
Since $\epsilon$ is arbitrary, we have
\be\label{7}
\limsup_{t\rightarrow0+}\frac{\omega(t)}{t^{\frac{m}{2}(\alpha-1)}}\leq
\int_0^{\infty}\exp(-C_0s^{\frac{2}{m}})s^{-\alpha}ds<\infty.
\ee
This proves the second case in (\ref{omegacondition}).

\emph{Case $3$:} $\alpha=1$.
Let $\epsilon$ and $N$ be as above.
Let $t$ be sufficiently small and $t^{-m/2}>N$. Then
\be\label{12}\omega(t)=\sum_{i<\frac{1}{t^{m/2}}}e^{-\lambda_{i,p}t}\cdot i^{-1}+
\sum_{i\geq\frac{1}{t^{m/2}}}e^{-\lambda_{i,p}t}\cdot i^{-1}.\ee
The two terms on RHS of (\ref{12}) can be estimated as follows.
\be\label{13}\sum_{i<\frac{1}{t^{m/2}}}e^{-\lambda_{i,p}t}\cdot i^{-1}<
\sum_{i<\frac{1}{t^{m/2}}}i^{-1}<1+\int_{1}^{\frac{1}{t^{m/2}}}s^{-1}ds
=1+\frac{m}{2}\ln{\frac{1}{t}}\ee
and
\be
\begin{split}\sum_{i\geq\frac{1}{t^{m/2}}}e^{-\lambda_{i,p}t}\cdot i^{-1}&=
\sum_{it^{m/2}\geq1}\exp\big[-\frac{\lambda_{i,p}}{i^{\frac{2}{m}}}
(it^{\frac{m}{2}})^{\frac{2}{m}}\big](it^{\frac{m}{2}})^{-1}
t^{\frac{m}{2}}\\
&<\sum_{it^{m/2}\geq1}\exp\big[-(C_0-\epsilon)
(it^{\frac{m}{2}})^{\frac{2}{m}}\big](it^{\frac{m}{2}})^{-1}
t^{\frac{m}{2}}\qquad\big(\text{by (\ref{5}) and $t^{-m/2}>N$})\\
&\rightarrow \int_{1}^{\infty}\exp\big[-(C_0-\epsilon)s^{\frac{2}{m}}\big]s^{-1}ds<\infty, \qquad t\rightarrow 0^+.
\end{split}\nonumber\ee
Since $\epsilon$ is arbitrary, we have
\be\label{14}
\limsup_{t\rightarrow 0^+}\sum_{i\geq\frac{1}{t^{m/2}}}e^{-\lambda_{i,p}t}\cdot i^{-1}\leq  \int_{1}^{\infty}\exp(-C_0 s^{\frac{2}{m}})s^{-1}ds<\infty.\ee
Putting (\ref{12}), (\ref{13}) and (\ref{14}) together we have
$$
\limsup_{t\rightarrow0^+}\frac{\omega(t)}{\ln\frac{1}{t}}= \limsup_{t\rightarrow0^+}\frac{\sum_{i<\frac{1}{t^{m/2}}}e^{-\lambda_{i,p}t}\cdot i^{-1}}{\ln\frac{1}{t}}+\limsup_{t\rightarrow0^+}\frac{\sum_{i\geq\frac{1}{t^{m/2}}}e^{-\lambda_{i,p}t}\cdot i^{-1}}{\ln\frac{1}{t}}<\frac{m}{2},
$$
proving the third case in (\ref{omegacondition}).
\end{proof}

\end{document}